\newtheorem{theorem}{Theorem}[section]
\newtheorem{definition}[theorem]{Definition}
\newtheorem{lemma}[theorem]{Lemma}
\newtheorem{remark}[theorem]{Remark}
\newtheorem{proposition}[theorem]{Proposition}
\newtheorem{condition}[theorem]{Condition}
\newcommand{\sss}{\scriptscriptstyle}
\newcommand*\epidn{\mathscr{E}^{(\rho)}_n(t)}
\newcommand*\susn{\mathcal{S}^{(\rho)}_n(t)}
\newcommand*\infn{\mathcal{I}^{(\rho)}_n(t)}
\newcommand*\recn{\mathcal{R}^{(\rho)}_n(t)}
\newcommand*\epidnr{\mathscr{E}^{(\rho)}_{n,r}(t)}
\newcommand*\susnr{\mathcal{S}^{(\rho)}_{n,r}(t)}
\newcommand*\infnr{\mathcal{I}^{(\rho)}_{n,r}(t)}
\newcommand*\recnr{\mathcal{R}^{(\rho)}_{n,r}(t)}
\newcommand*\sus{\mathcal{S}^{(\rho)}(t)}
\newcommand*\infect{\mathcal{I}^{(\rho)}(t)}
\newcommand*\rec{\mathcal{R}^{(\rho)}(t)}
    \renewcommand*{\bm}[1]{#1}%
\newcommand*\bgrg{\mathrm{BGRG}_n(\bm{w})}
\newcommand*\drig{\mathrm{DRIG}_n(\bm{w})}
\newcommand*\bgrgs{\mathrm{BGRG}_n^{s}(\bm{w})}
\newcommand*\drigs{\mathrm{DRIG}_n^{s}(\bm{w})}
\newcommand*\markedunion{G_n^{\sss T,(\mathrm{ON, OFF})}}
\newcommand*\markedunionroot{\left(G_n^{\sss T,(\mathrm{ON, OFF})},o_n\right)}
\newcommand*\markedunionv{\left(G_n^{\sss T,(\mathrm{ON, OFF})},v\right)}
\newcommand*\markedg{G^{\sss T,(\mathrm{ON, OFF})}}
\newcommand*\marklim{\left(G^{\sss T,(\mathrm{ON, OFF})},o\right)}
\newcommand*\marklimbar{\left(\Bar{G}^{\sss T,(\mathrm{ON}, \mathrm{OFF})},\Bar{o}\right)}
\newcommand*\hmarkedroot{\left(H^{\sss(\mathrm{ON, OFF})},o\right)}
\newcommand*\markeduniontilde{\Tilde{G}_n^{\sss T,(\mathrm{ON, OFF})}}
\newcommand*\piaon{\pi^a_{\text{\rm{ON}}}}
\newcommand*\piaoff{\pi^a_{\text{\rm{OFF}}}}
\newcommand*\laon{\lambda^a_{\text{\rm{ON}}}}
\newcommand*\laoff{\lambda^a_{\text{\rm{OFF}}}}
\newcommand*\sigeon[2]{\sigma^{\sss #1}_{\sss #2,\text{\rm{ON}}}}
\newcommand*\sigeoff[2]{\sigma^{\sss #1}_{\sss #2,\text{\rm{OFF}}}}
\newcommand*\teon[2]{t^{\sss #1}_{\sss #2,\text{\rm{ON}}}}
\newcommand*\teoff[2]{t^{\sss #1}_{\sss #2,\text{\rm{OFF}}}}
\newcommand*\Var{\mathrm{Var}}
\newcommand{\longversion}[1]{}
\title{Dynamic random intersection graph:\\
Dynamic local convergence and giant structure}
\author[1]{Marta Milewska}
\author[2]{Remco van der Hofstad}
\author[1,2]{Bert Zwart}
\date{}
\affil[1]{Centrum Wiskunde \& Informatica (CWI), P.O. Box 94079, 1090 GB Amsterdam, The Netherlands}
\affil[2]{Department of Mathematics and Computer Science, Eindhoven University of Technology, 5600 MB Eindhoven, The Netherlands}
\title{SIR on locally converging dynamic random graphs}
\begin{document}

\maketitle

\begin{abstract}
In this paper, we study the trajectory of a classic SIR epidemic on a family of dynamic random graphs of fixed size, whose set of edges continuously evolves over time. We set general infection and recovery times, and start the epidemic from a positive, yet small, proportion of vertices. We show that in such a case, the spread of an infectious disease around a typical individual can be approximated by the spread of the disease in a local neighbourhood of a uniformly chosen vertex.

We formalize this by studying general dynamic random graphs that converge dynamically locally in probability and demonstrate that the epidemic on these graphs converges to the epidemic on their dynamic local limit graphs. We  provide a detailed treatment of the theory of dynamic local convergence, which remains a relatively new topic in the study of random graphs. One main conclusion of our paper is that a \emph{specific} form of dynamic local convergence is required for our results to hold.
\end{abstract}


\section{Introduction}
The first mathematical model of epidemic evolution is accredited to David Bernoulli and his analysis of the smallpox outbreak, presented at the Royal Academy of Sciences in  Paris in 1760 with the aim of advocating a preventive inoculation against the disease. His work was published in \cite{Bernoulli_1766} in 1766. Afterwards, the most significant contributions to the mathematical modelling of infectious diseases were made in the 20th century \cite{Kendall_1956, McKendrick_1927, Ross_1916, Ross_1917}. 

Mathematical models of epidemics in the above works differ in details and assumptions; however, they have a common denominator: they are \emph{compartmental}. Compartmental models assign each individual to one of the distinguished categories (i.e., compartments), which reflect their status with respect to the disease, such as `infected' or `recovered'. Since their emergence, these models have been continuously gaining popularity and nowadays they constitute a fundamental apparatus for predicting disease evolution.

Compartmental models are widely used to explore various aspects of epidemics, such as the total number of infections, the duration, or the rate of growth \cite{Eubank_2004, Larson_2007, Lloyd-Smith_2009}. They are also valuable tools in assessing public health policies, such as vaccination strategies and lockdown measures \cite{Acemoglu_2023, Bastani_2021, Birge_2022, Mamani_2013}. Moving beyond the realm of infectious diseases, compartmental models have applications in social sciences, for instance, to analyze the spread of information or opinions \cite{Bass_1969}. Mathematically, these approaches often rely on deterministic or stochastic partial differential equations (PDEs) \cite{Bartlett_1949, Britton_2019, Dimitrov_2010}, but due to the simplifications inherent in such systems, they may not capture the full complexity of disease dynamics.

Random graph models offer an alternative, allowing for more flexibility and randomness in both the disease progression and the network structure \cite{Croccolo_2020, Fransson_2019, Manshadi_2020}. However, most research using such models considers \emph{static} networks, which do not account for the continuous evolution of interpersonal contacts over time. In reality, social connections change frequently and vary in strength: some, like workplace or household interactions, last longer, while others, such as those formed during commutes, are more transient and unpredictable. For a more realistic depiction, dynamic network models have been developed \cite{Altmann_1995, Ball_2017, Britton_2016, Lashari_2018}. These models capture the temporal variability of connections but are inherently more complex to analyze, as they require tracking both the epidemic’s progression and the evolving population structure. Therefore, the mathematical analysis of these dynamic models is often highly specific to each situation, depending on the nature of the model and the context of the epidemic.

In this paper, we take a more general approach. We consider a dynamic network without a demographic component, but with constantly evolving connections between the network members. We take a general approach, refraining from specifying the graph and its dynamic in detail, and consider a Susceptible-Infected-Recovered (SIR) epidemic started from a positive proportion of infected vertices with general infection and recovery times.
We require the dynamic to shape the graph in a way that results in convergence of the proportion of vertices whose neighbourhoods have some particular dynamic structure. We then base our methodology on the consequences of this convergence. The power of such a technique consists in its applicability to a large number of dynamic random graphs at the same time.

The convergence of the proportion of vertices with a particular neighbourhood structure is referred to as \emph{local convergence}, a concept that is well-established and thoroughly studied in the context of static graphs. However, for dynamic random graphs, this notion is relatively recent, having been introduced in \cite{dynweaklimit2023} and independently in our work \cite{Milewska2023}, where we explored it within the framework of a specific dynamic graph model. In this paper, we present the concept in a more general setting and examine its nuances by introducing a stronger form of local convergence for dynamic random graphs. Our primary result demonstrates the convergence of an epidemic process on the dynamic graph to the corresponding process on its dynamic local limit, under the assumption of this stronger form of dynamic local convergence. This result propagates the use of sparse graph limits in epidemiological modelling implying that a disease outbreak is to a large extent a \emph{local} property of the graph. This approach builds upon the results and proofs presented in \cite{Alimohammadi2024}, extending their findings from the context of locally converging static graphs to dynamic random graphs.\\

\noindent\textbf{Main innovations of this paper:}\\

\noindent\textbf{Extension of \cite{Alimohammadi2024} to a dynamic setting.} In this paper, we extend the results of \cite{Alimohammadi2024}, where the authors establish a connection between epidemic processes on random static graphs that converge locally, and the corresponding processes on the local limits of these graphs. Specifically, we generalize this framework to dynamic graphs, characterized by a fixed vertex set and an evolving edge set, thereby adapting the analysis to a dynamic context.\\

\noindent\textbf{Description of dynamic local convergence.} Dynamic local convergence, that is, the local convergence of dynamic graphs, is a relatively new concept that was first explored independently in \cite{dynweaklimit2023} and \cite{Milewska2023} and further analysed in \cite{Rath_2024}. However, these works define the concept differently. In this paper, we revisit our approach where the dynamic graph is treated as a stochastic process in time, and its convergence is derived by applying the well-established theory of convergence of processes with discontinuities. We briefly describe the theoretical background of this framework and then specify it to the space of rooted dynamic graphs, offering a comprehensive characterization of dynamic local convergence applicable to a broad class of dynamic random graphs, without limiting the analysis to any particular model. This approach provides a systematic, self-contained presentation of dynamic local convergence.\\

\noindent\textbf{Unraveling complications of local convergence of dynamic graphs.} While extending the results of \cite{Alimohammadi2024} to the dynamic setting, we observed that the relationship between the graph and processes evolving on it becomes significantly more intricate in the dynamic case compared to the static one. Specifically, the information obtained from dynamic local convergence, as introduced in \cite{Milewska2023}, which captures the structure of the connected component of a uniformly chosen vertex at successive time instances, does not suffice for analyzing certain processes acting on the network, such as epidemic spreading. As a result, a stronger form of local convergence for dynamic graphs is necessary to rigorously establish the connection between an epidemic process on a dynamic graph and its counterpart on the limit. We introduce this stronger form and demonstrate its suitability for the epidemic's description. This insight also highlights an intriguing open problem in the field of dynamic graph convergence: understanding the relationship between these two forms of dynamic local convergence.\\

\noindent\textbf{Simulations on dynamic graphs.} Our main result allows for transitioning from the setting of dynamic graphs to their dynamic graph limits when analyzing epidemic processes. In many cases, including several examples presented in this paper, these dynamic graph limits take the form of dynamic rooted trees. This transition significantly simplifies the simulation of epidemics on dynamic structures, as it reduces both the computational complexity and the volume of data that needs to be analyzed.\\

\noindent\textbf{Organisation of this paper.} The paper is organised as follows: In Sections \ref{sec_dyn_graph}-\ref{sec_simulations_overview} we provide an introduction, summarizing our contributions and introducing the key concepts of dynamic random graphs and the SIR epidemic model that we will investigate. We also give an informal description of local time-marked union convergence, which is the version of dynamic local convergence most suited for epidemics. We further present our main result, and conclude with a brief overview of our simulation study and a discussion. Section \ref{sec_locconv_theory} delves into dynamic local convergence, beginning with the necessary background on the metric space of rooted graphs and covering both local weak convergence and local convergence in probability for dynamic random graphs. Additionally, we introduce the concept of local time-marked union convergence and discuss the backward process that generates epidemic marks. In Section \ref{sec_proofs_overview}, we present an overview of the proofs, including auxiliary results and the proof of the main theorem on convergence for the epidemic process. Sections \ref{sec_all_proofs_prelim} and \ref{sec_all_proofs_lim} are dedicated to detailed proofs of the auxiliary results, covering bounds on the moments and local approximations. Finally, Section \ref{sec_simulations_details} provides background on the simulation study outlined in Section \ref{sec_simulations_overview}, with a focus on static and dynamic limiting results for Erd\H{o}s-R{\'e}nyi random graphs and random intersection graphs.

\subsection{Dynamic random graphs} \label{sec_dyn_graph}
We investigate dynamic random graphs, where the set of vertices remains fixed but the set of edges evolves over time. We formalise it in the following definition:

\begin{definition}[Dynamic random graph] \label{def_dyn_g}
A dynamic random graph process is given by $\big(G_n^s\big)_{s\in[0,T]} = \big((\mathcal{V}_n, \mathcal{E}^s_n)\big)_{s\in[0,T]}$ with a fixed vertex set $\mathcal{V}_n = [n] = \{1, \ldots, n\}$, and a dynamic edge set $\big(\mathcal{E}^s_n\big)_{s\in[0,T]}$, with paths in $\mathcal{D}\left([0,T], \mathcal{E}(K_n)\right)$ — the space of c{\`a}dl{\`a}g functions $f:[0,T] \mapsto \mathcal{E}(K_n)$, where $K_n$ is the complete graph on the vertex set $[n]$, and $\mathcal{E}(K_n)$ denotes its edge set.
\end{definition}
Consequently, we say that an edge $e \in \mathcal{E}_n^s$ is said to be `ON' at time $s \in [0,T]$ if $e \in \mathcal{E}_n^s$, meaning that it is present in the graph at time $s$. Conversely, the edge $e$ is said to be `OFF' at time $s$ if $e \notin \mathcal{E}_n^s$, meaning that it is absent from the graph at time $s$.  We will also write `$e$ switches ON’ to denote the event of changing status from being in an OFF state to being in an ON state, and analogously for the reversed roles of ON and OFF.

We leave the dynamics unspecified for now, keeping them at the most general level. However, our main result (as well as most of the auxiliary results presented in Section \ref{sec_proofs_overview}) will only apply to dynamic random graphs that \textit{converge locally in probability} (as we explain in Section \ref{sec_locconv_theory}), which will naturally impose restrictions on the types of dynamics permitted.
 
\subsection{SIR epidemic model} \label{SIR_setup}
We focus on a Susceptible-Infected-Removed (SIR) model where individuals can be susceptible, infected, or removed. Vertices in the graph will represent the individuals, and edges between these vertices indicate potential paths for the spread of the epidemic. An infected vertex transmits the disease to its neighbours independently at random times drawn from some arbitrary continuous distribution denoted by $D_I$. Every edge ever ON in $[0,T]$, i.e., every edge $\{u,v\}\in \bigcup_{s\in[0,T]}\mathcal{E}^s_n$, is associated with a unique copy $C\left(\{u,v\}\right)$ drawn from the distribution $D_I$. If $u$ or $v$ becomes infected, then $C\left(\{u,v\}\right) \sim D_I$ is compared with the ON periods of the edge $e$ to determine whether an infected vertex can transmit the disease to its susceptible neighbour. Specifically, the transmission time $C\left(\{u,v\}\right) \sim D_I$ is initialized at the infection time of the infected endpoint, provided this infection time falls within an ON period of the edge. Otherwise, $C\left(\{u,v\}\right) \sim D_I$ is initialized at the next activation time of the edge $e$. Then, the actual transmission time computed as the sum of the initialization time and transmission time $C\left(\{u,v\}\right)\sim D_I$ must fall within (any, not necessarily the first) ON period of the edge $e$ for the infection to be transmitted. For more details, see Section \ref{sec_backward_process}. An infected vertex independently recovers at a time drawn from some arbitrary distribution denoted by $D_R$. Initially, every vertex is infected independently with some fixed probability $\rho>0$.\\
We represent the epidemic process at time $t\geq0$ by the vector
\begin{align}
    \epidn = \big(\susn,\infn,\recn \big),
\end{align}
with $\susn,\infn,\recn$ denoting the (random) proportions of susceptible, infected and recovered vertices in $\big(G_n^s\big)_{s\in[0,T]}$ at time $t$, respectively.\\

\noindent\textbf{Model interpretation.} In dynamic graphs, the contact process is naturally modeled by the $\mathrm{ON}$ and $\mathrm{OFF}$ dynamics of the edges. However, to prevent the scenario where any contact between an infected and susceptible vertex instantly results in transmission, and to facilitate comparisons with static cases, a random variable $C\left(\{u,v\}\right)$ drawn from the distribution $D_I$ is assigned to each edge. This variable is initialized at the first $\mathrm{ON}$ period following the infection time, representing a latency period that reflects the inherent randomness of the transmission process.

The assignment of independent copies of $C\left(\{u,v\}\right)$ to edges also accounts for heterogeneity in transmission patterns. For models where edges only activate once (which is the most likely scenario for some of the dynamic random graph models discussed in Section \ref{sec_examples_dyn_rg}), $C\left(\{u,v\}\right)$ can be interpreted as the amount of time required for an infected individual to transmit the disease during a single contact. In cases where multiple $\mathrm{ON}$ periods occur, the direct interpretation of $C\left(\{u,v\}\right)$ as the time required for transmission during contact becomes less straightforward, as the model does not depend on the length or frequency of intervening $\mathrm{OFF}$ periods. Nevertheless, $C\left(\{u,v\}\right)$ continues to capture the stochastic nature of transmission events, which are influenced by factors such as viral load and immune response rather than the continuity of contact. Pathogen infectivity can persist even after breaks in contact, allowing for transmission during subsequent interactions. This approach remains mathematically tractable, preserves the heterogeneity of transmission rates, and provides a foundational framework for modelling disease dynamics on dynamic networks.

\subsection{Informal description of local time-marked union convergence} \label{sec_informal}
After detailing the setup of the SIR model on dynamic graphs, we intuitively describe the concepts required for understanding the main result. Throughout this paper, we heavily rely on the property of dynamic random graphs called \emph{dynamic local convergence}, as introduced in \cite{Milewska2023} and inspired by \cite{Aldous2004, Benjamini2001}. This property ensures that the dynamic neighbourhood structures of uniformly chosen vertices converge to a limiting probability distribution on dynamic rooted graphs. The vertices are called roots, and are denoted by $o_n$ in the original graph and $o$ in the limit. The space of all rooted graphs is denoted $\mathcal{G}_{\star}$. The dynamic convergence we describe guarantees that the shape of connected components around $o_n$ at subsequent time instances throughout $[0, T]$ can be characterized by a limiting dynamic graph. However, as mentioned, in the dynamic setting this convergence does not fully capture the information needed for epidemic spreading.

For example, imagine that $o_n$ is connected to vertex $u_1$ at time $[1,2]$, and $u_1$ was connected to an initially infected vertex $u_2$ during $[0,1)$. Although $u_2$ and $o_n$ are never directly connected, $u_2$ can still infect $o_n$ through $u_1$. This indirect connection is missed if we only look at the connected components of $o_n$ at specific times. Therefore, a stronger notion of local convergence is required to account for such indirect paths.

To address this, we introduce the notion of a \emph{union graph}, which consists of all edges that were ever active during the time interval $[0,T]$. More specifically, an edge $e$ is present in the union graph precisely when there exists a time $s$ such that $e\in\mathcal{E}_n^s$. This ensures that all vertices ever connected to $o_n$ during $[0,T]$, either directly or indirectly through its neighbors, are included. At each edge in the union graph, we attach a sequence of ON and OFF times, representing the (potentially long) vector of time pairs indicating when the edge switches ON and OFF. These sequences, referred to as \emph{marks}, retain precise temporal information about the dynamics of the graph. We then require the local convergence of the union graph, along with its marks, ensuring that the graph structure around a vertex, including the sequence of ON and OFF times of its incident edges, converges to a limiting (also marked) structure. This new notion, termed \emph{local time-marked union convergence}, provides comprehensive control over the evolving graph process and encapsulates all necessary information for applications such as epidemic modeling. We formalize the above concepts in Section \ref{sec_locconv_theory}, where further details can be found.

\subsection{Main result}
Having informally explained the above concepts, we now present our main theorem - the dynamic analogue of Theorem 2.5 from \cite{Alimohammadi2024}. Denote convergence in probability by `$\stackrel{\textbf{P}}{\longrightarrow}$'. The following theorem links an SIR epidemic process on a dynamic graph with an epidemic process on the local limit of its corresponding time-marked union graph:

\begin{theorem}[Dynamic convergence of the epidemic processes] \label{thm_dyn_conv_epid}
Let $(G_n^s)_{s\in[0,T]}$ be a dynamic graph sequence converging in probability in a local time-marked union sense to the limiting time-marked union graph on $\mathcal{G}_{\star}$. Consider an SIR epidemic process on $(G_n^s)_{s\in[0,T]}$, in which every vertex is initially infected independently with probability $\rho>0$, and susceptible otherwise. Then, for any $t\in[0,T]$, there are deterministic functions $\mathscr{E}(t) = \big(s(t),i(t),r(t)\big)$ such that, for any $t\in[0,T]$,
\begin{align}
    \epidn \stackrel{\textbf{P}}{\longrightarrow}\mathscr{E}(t),
\end{align}
with
\begin{align} \label{main_thm_limits}
    s(t) = \mu\big(o\in \sus\big), \hspace{0.5cm} i(t) = \mu\big(o\in \infect\big) \hspace{0.5cm}\text{and}\hspace{0.5cm} r(t) = \mu\big(o\in \rec\big), 
\end{align}
where $\mu$ on the right-hand side refers to the law of the limiting time-marked union graph, $\big(\sus,\infect,\rec\big)$ are sets of susceptible, infected and recovered vertices in an SIR epidemic on the limiting dynamic graph and every vertex is in $\mathcal{I}^{(\rho)}(0)$ independently with probability $\rho$.
\end{theorem}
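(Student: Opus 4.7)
The plan is to prove Theorem \ref{thm_dyn_conv_epid} by a truncation--plus--local-convergence argument, adapting the strategy of Theorem 2.5 of \cite{Alimohammadi2024} to the time-marked union setting. Since $\susn, \infn, \recn$ are each of the form $\tfrac{1}{n}\sum_{v=1}^n \mathbf{1}\{v\in\,\cdot\,\}$, the scheme is: (i) show that the status of a vertex $v$ at time $t$ depends, up to small error, only on a bounded local neighbourhood of $v$ in the time-marked union graph together with the auxiliary randomness (initial infections, infection clocks and recovery clocks) attached to it; (ii) invoke local time-marked union convergence to identify the $n\to\infty$ limit of the first moment; and (iii) use the in-probability strengthening of local convergence to upgrade convergence in mean to convergence in probability.

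For the localisation step I would use the backward exploration process described in Section \ref{sec_backward_process}: the status of $v$ at time $t\in[0,T]$ is determined by tracing backwards from $v$ all potential infection chains within $[0,t]$. Since $t$ is finite and $D_I$ is a non-degenerate continuous distribution, the expected number of edges revealed by this process admits a branching-type bound, obtained from the moment control developed in Section \ref{sec_all_proofs_prelim}, uniformly in $n$ and in the choice of root. Hence for every $\varepsilon>0$ there exists $R=R(\varepsilon,t)<\infty$ such that the backward process from $v$ stays inside the $R$-ball of $v$ in the time-marked union graph with probability at least $1-\varepsilon$. On this good event, $\mathbf{1}\{v\in\susn\}$ (and its $\infn$, $\recn$ analogues) becomes a bounded measurable functional $f_R$ of the marked rooted ball $B_R(v)$ together with its attached clocks. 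Local time-marked union convergence in probability then yields, for each fixed $R$,
\begin{align}
\frac{1}{n}\sum_{v=1}^n f_R\bigl(B_R(v)\bigr) \stackrel{\textbf{P}}{\longrightarrow} \mathbb{E}_\mu\bigl[f_R\bigl(B_R(o)\bigr)\bigr],
\end{align}
and sending $R\to\infty$, while controlling the truncation error uniformly in $n$ via Step (i), identifies the limits claimed in \eqref{main_thm_limits}.

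To upgrade convergence of the mean to convergence in probability I would bound $\mathrm{Var}(\susn)$ by controlling the joint law of the time-marked rooted balls around two independently uniformly chosen vertices. This is precisely where the \emph{in probability} (as opposed to merely weak) version of local time-marked union convergence is used: it guarantees that two such balls behave asymptotically as independent samples from $\mu$. Combined with the fact that on the truncation event the statuses of two far-apart vertices are functionals of disjoint backward processes, and hence asymptotically independent, this yields $\mathrm{Var}(\susn)=o(1)$; the same argument applied to $\infn$ and $\recn$ completes the convergence in probability.

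The main obstacle is the localisation in Step (i): establishing the correct dynamic analogue of the static truncation bound. In the static setting the backward process is driven purely by transmission across edges, and its size is dominated by a branching process involving only $D_I$. In the dynamic setting, whether a given edge actually transmits depends on the full ON/OFF pattern of that edge throughout $[0,t]$, so the backward exploration is genuinely marked, and the necessary moment estimates must be derived from the time-marked union graph rather than from any individual snapshot. This is exactly why the snapshot-based dynamic local convergence of \cite{Milewska2023} does not suffice and why the stronger time-marked union formulation introduced in Section \ref{sec_locconv_theory} is required: only this formulation retains enough information for the backward process --- and therefore the epidemic status of the root --- to be a measurable functional of the rooted limit.
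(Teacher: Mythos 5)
Your overall architecture --- localise the status of a vertex to an $r$-ball, identify the limit of the localised quantity via local time-marked union convergence, control the variance using the in-probability form of that convergence, then send $r\to\infty$ --- is the same as the paper's (Propositions \ref{lem_1stmom}--\ref{lem_conv_loc_approx}). However, your localisation step (i) contains a genuine gap. You justify the truncation by claiming that, because $t$ is finite and $D_I$ is continuous, the backward exploration admits a branching-type bound uniform in $n$, so that it stays inside an $R(\varepsilon,t)$-ball with probability at least $1-\varepsilon$. That is not the mechanism the paper uses, and as stated it is false: the first stage of the backward process retains every edge with $C(\{u,w\})<R_w$, an event of fixed positive probability, so the revealed cluster is a percolation cluster that may be giant; and even the set of paths that could transmit within time $t$ is not uniformly bounded without degree assumptions that are never made. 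The correct localisation, and the reason the theorem requires $\rho>0$, is Proposition \ref{lem_1stmom}: if the true shortest weighted infection path to $v$ originates outside $B_r$, its first $r$ vertices must all have escaped the independent initial infection, which has probability at most $(1-\rho)^r$. Your argument never invokes $\rho>0$, yet without it the conclusion fails (the status of a typical vertex in an epidemic started from a single seed is not locally determined), so the quantitative truncation estimate you need cannot be recovered from your reasoning.

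A secondary issue: in step (ii) you apply local convergence to a ``bounded measurable functional'' $f_R$ of the marked ball. Local convergence only transfers bounded \emph{continuous} functionals, and since the marks (ON/OFF times and the attached clocks) take values in a continuum, $f_R$ is not locally constant; the paper has to prove continuity of the infection-probability functional by an explicit coupling of the epidemics on two nearby time-marked rooted graphs (Lemma \ref{lem_fct_cont_bounded}), using the continuity of $D_I$. Your variance step, by contrast, is in line with the paper's Proposition \ref{lem_2ndmom_random}.
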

We give a more explicit description of the limiting variables in (\ref{main_thm_limits}) in Section \ref{sec_backward_process}, in terms of the backwards epidemic process. Theorem \ref{thm_dyn_conv_epid} extends an equivalent finding about the epidemic on a static graph converging locally from \cite{Alimohammadi2024} to the dynamic setting, showing that also in a dynamic graph, an SIR epidemic is practically a local property of the graph. We hope that this result can facilitate the investigation of epidemics on dynamic random graphs, by transferring the problem from the detailed world of real graph dynamics to a realm of limiting graphs, which tend to be more regular and approachable.

\subsection{Some dynamic random graph models} \label{sec_simulations_overview}
In this section, we present examples of dynamic random graphs that converge locally in time-marked union sense and provide an overview of the results from our simulation study. This study leverages Theorem \ref{thm_dyn_conv_epid} to explore various aspects of epidemic dynamics on random graphs, with a focus on how dynamics influence the spread of the epidemic. We begin by introducing dynamic extensions of a well-known Erd\H{o}s-R{\'e}nyi random graph model and its generalization in the form of dynamic intersection random graph. Finally, we describe a dynamic version of the configuration model, which incorporates edge rewiring.

\subsubsection{Examples} \label{sec_examples_dyn_rg}
We now define several specific dynamic random graph models. Dynamic random graphs have been explored in various other works, such as \cite{Altmann_1995, AvenadHvdH2018, Mandjes_2019, Mandjes_2024, Sousi_2020, Trapman2016_dynER}. However, these studies do not specifically focus on their local convergence. In particular, two of the cited studies \cite{Altmann_1995, Trapman2016_dynER} investigated natural dynamic formulations of the Erd\H{o}s-R{\'e}nyi random graph. Here, we define our version in an analogous manner, with slight adjustments to the rates of the exponential variables:

\begin{definition}[Dynamic Erd\H{o}s-R{\'e}nyi random graph] \label{dyn_er_def}
Fix $\gamma>0$ and a positive integer $n$. The dynamic Erd\H{o}s-R{\'e}nyi graph $\big(\mathrm{ER}^s_n(\gamma/n) \big)_{s\in[0,T]}$, denoted shortly also by $\mathrm{DER}_n(\gamma/n)$, is a stochastic process with the number of vertices fixed at $n$ and the set of edges evolving according to the following dynamics: At time $s=0$, we draw a realisation of the static $\mathrm{ER}_n(\gamma/n)$, i.e., every edge is present independently with probability $\gamma/n$ and absent otherwise. For $s>0$, independently for each vertex pair, if no edge is present then it is added after an $Exp(\gamma/(n-\gamma))$-distributed time; if an edge is present, then the edge is removed after an $Exp(1)$-distributed time.
\end{definition}

This dynamic Erd\H{o}s-R{\'e}nyi model introduces temporal evolution by allowing edges to be added and removed dynamically. In practice, this process can capture settings where interactions between nodes change over time, while maintaining the edge density characteristic of the static Erd\H{o}s-R{\'e}nyi graph. Next, we consider an alternative variation of the dynamic Erd\H{o}s-R{\'e}nyi model that simplifies the temporal dynamics.

\begin{definition} [Alternative dynamic Erd\H{o}s-R{\'e}nyi random graph] \label{def_alt_er}
At time $s=0$ we draw a realisation of the static $\mathrm{ER}_n(\gamma/n)$, i.e., every edge is present independently with probability $\gamma/n$ and absent otherwise. For $s>0$, we let all active edges switch $\mathrm{ON}$ and $\mathrm{OFF}$ at rate 1.
\end{definition}

In this alternative model, each edge present from the start switches ON and OFF independently at a constant rate. This modification retains the randomness of edge presence but changes the timing of edge updates compared to the dynamic Erd\H{o}s-R{\'e}nyi random graph defined earlier.\\

The dynamic Erd\H{o}s-R{\'e}nyi random graphs provide a simple example of edge dynamics, where edges independently switch ON and OFF over time. However, many real-world networks exhibit more complex structures, such as community-based connectivity. To capture this, we now introduce the dynamic random intersection graph, which incorporates group-based interactions with stochastic dynamics.

\begin{definition}[Dynamic random intersection graph] \label{def_dyn_drig}
A dynamic random intersection graph is constructed as follows: Let \([n] = \{1, \dots, n\}\) denote the set of vertices, and \([n]_k\) the set of all subsets of size \(k \geq 2\) from \([n]\), which represent groups. Each group \(a \in [n]_k\) alternates between $\mathrm{ON}$ and $\mathrm{OFF}$ states independently, following a continuous-time Markov process. The $\mathrm{ON}$ and $\mathrm{OFF}$ holding times are exponentially distributed with rates \(\lambda^a_{\text{\rm{ON}}} = 1\) and \(\lambda^a_{\text{\rm{OFF}}} = \frac{f(|a|)\prod_{i \in a} w_i}{\ell_n^{|a|-1}}\), where  $|a|$ is the size of the group \(a \in \cup_{k \geq 2} [n]_k\), \(f: \mathbb{N}_{\geq 2} \to \mathbb{R}_{+}\) determines the group-size dependency, \(w_i\) is the weight of vertex \(i\), and \(\ell_n = \sum_{i \in [n]} w_i\) is the total weight. At any time \(s\), vertices \(i, j \in [n]\) are connected by an edge if they belong to at least one group \(a\) that is ON, forming dynamic cliques as group states evolve.
\end{definition}

The dynamic random intersection graph offers a flexible framework to model networks with evolving community structures. Unlike most other models, it naturally builds in \emph{clustering} in the form of many triangles being present in the network. Next, we extend our consideration to a dynamic version of the configuration model, which incorporates edge rewiring but preserves the degree sequence over time.

\begin{definition}[Configuration model with rewiring of the edges] \label{def_dyn_CM} At time \( s = 0 \), a realization of the static configuration model \(\mathrm{CM}_n(\bm{d})\) is generated. Each vertex \( v \in [n] \) is assigned \( d_v \) half-edges, where the degree sequence \(\bm{d} = (d_1, d_2, \ldots, d_n)\) satisfies that \(\ell_n=\sum_{v \in [n]} d_v\) is even. These half-edges are paired uniformly at random to form edges, resulting in a multigraph that may contain self-loops and multiple edges. For \( s > 0 \), edges are dynamically rewired. Specifically, at rate $\alpha \ell_n$, we sample two edges uniformly at random. These edges are subsequently broken into four half-edges, which are then re-paired uniformly at random.
\end{definition}

This dynamic configuration model maintains a consistent degree sequence while introducing temporal variability in the graph structure. The dynamic rewiring process allows for the possibility of re-forming the original edges, creating new connections, introducing new self-loops, or removing existing ones. Such a model is particularly suited to applications where node connectivity patterns remain fixed in terms of degrees, but connections themselves fluctuate dynamically. See Section \ref{sec_simulations_details} for more details about the model and an argumentation of their local time-marked union convergence. With these models defined, we now turn to analyzing implications of temporal changes for epidemic dynamics and evaluating the accuracy of local approximations derived from Theorem \ref{thm_dyn_conv_epid}.\\

\subsubsection{Simulation study} \label{sec_sim_on_dyn_rg}
In the remainder of this section, we employ the dynamic Erd\H{o}s-R{\'e}nyi random graph from Definition \ref{dyn_er_def} to simulate and evaluate the correctness of the local approximation established in Theorem \ref{thm_dyn_conv_epid}. Numerical validation through simulations provides confidence in the approximation's applicability beyond its theoretical foundation. Furthermore, simulations enable a direct comparison of epidemic dynamics on static and dynamic graphs with the same stationary distribution, offering insights into how graph dynamics influence key epidemic characteristics, such as the epidemic curve. While theoretical work, such as \cite{Altmann_1995}, has previously explored this question for an SIR model by comparing basic reproduction numbers ($R_0$) on static and dynamic Erd\H{o}s-R{\'e}nyi random graphs, our approach provides a more detailed characterization of the impact of dynamics, including how the epidemic unfolds over time. This simulation-based analysis is particularly valuable for understanding scenarios where analytical solutions are challenging or infeasible.\\

\noindent\textbf{Test of the local approximation.} In Figure \ref{fig:plots_2}, we test the accuracy of our main result from Theorem \ref{thm_dyn_conv_epid} by simulating the SIR epidemic on a dynamic Erd\H{o}s-R{\'e}nyi random graph (see Definition \ref{dyn_er_def}) and its time-marked union local limit. We assume that $D_I$ and $D_R$ follow exponential distributions with rates $I$ and $R$, respectively. We observe that the local limit provides an accurate approximation of the epidemic progression on the dynamic graph. This result is significant because it confirms that we can replace dynamic graphs with their dynamic local limit, frequently represented by dynamic trees, in further simulations, reducing computational complexity and cost.\\

\begin{figure}[h] 
    \centering
    \begin{subfigure}[b]{0.3\textwidth}    
         \centering 
         \includegraphics[width=\textwidth]{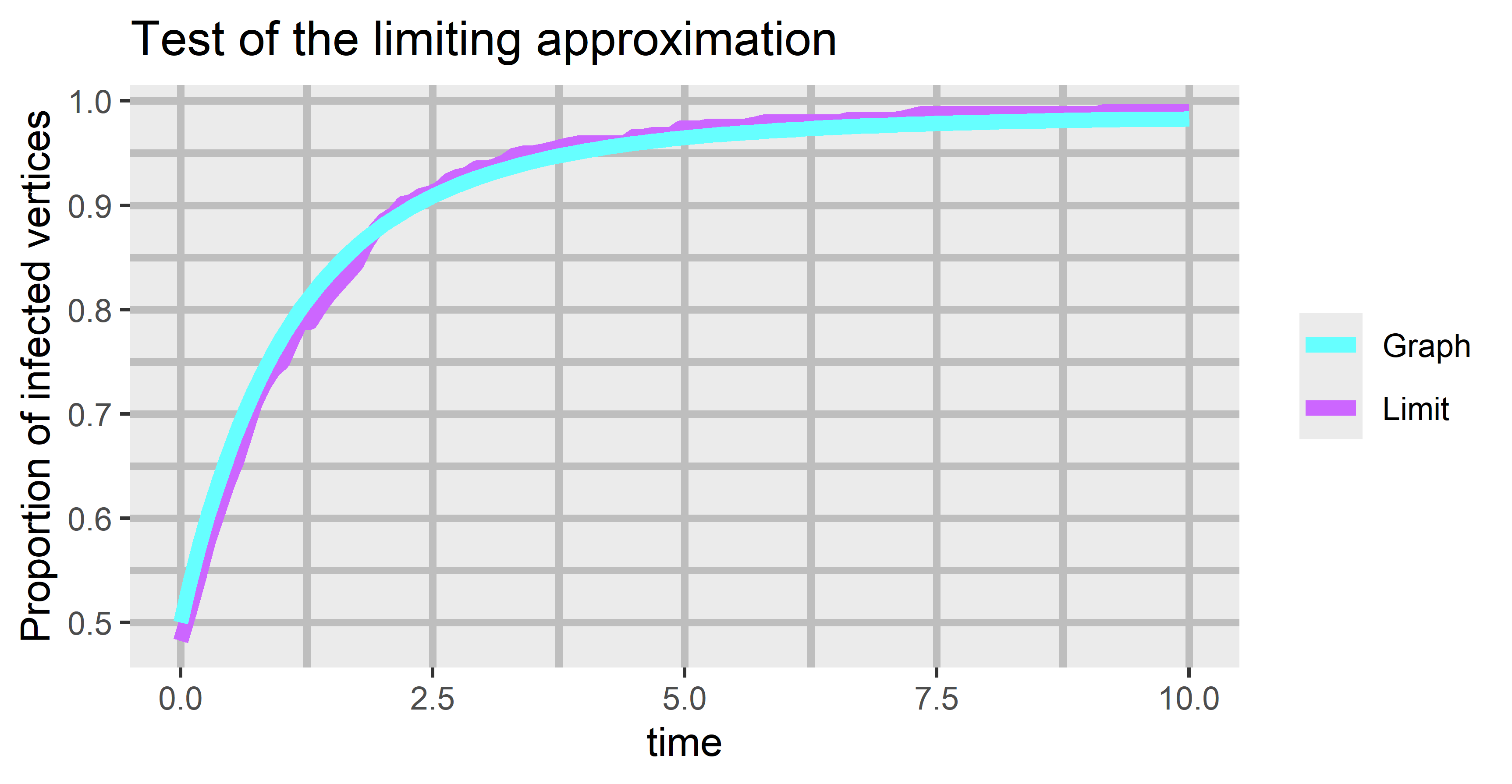}
             \caption[]%
             {{\small $\rho = 0.5, I=2, R=3$}}    
     \end{subfigure}
     \hfill
     \begin{subfigure}[b]{0.3\textwidth}   
         \centering 
         \includegraphics[width=\textwidth]{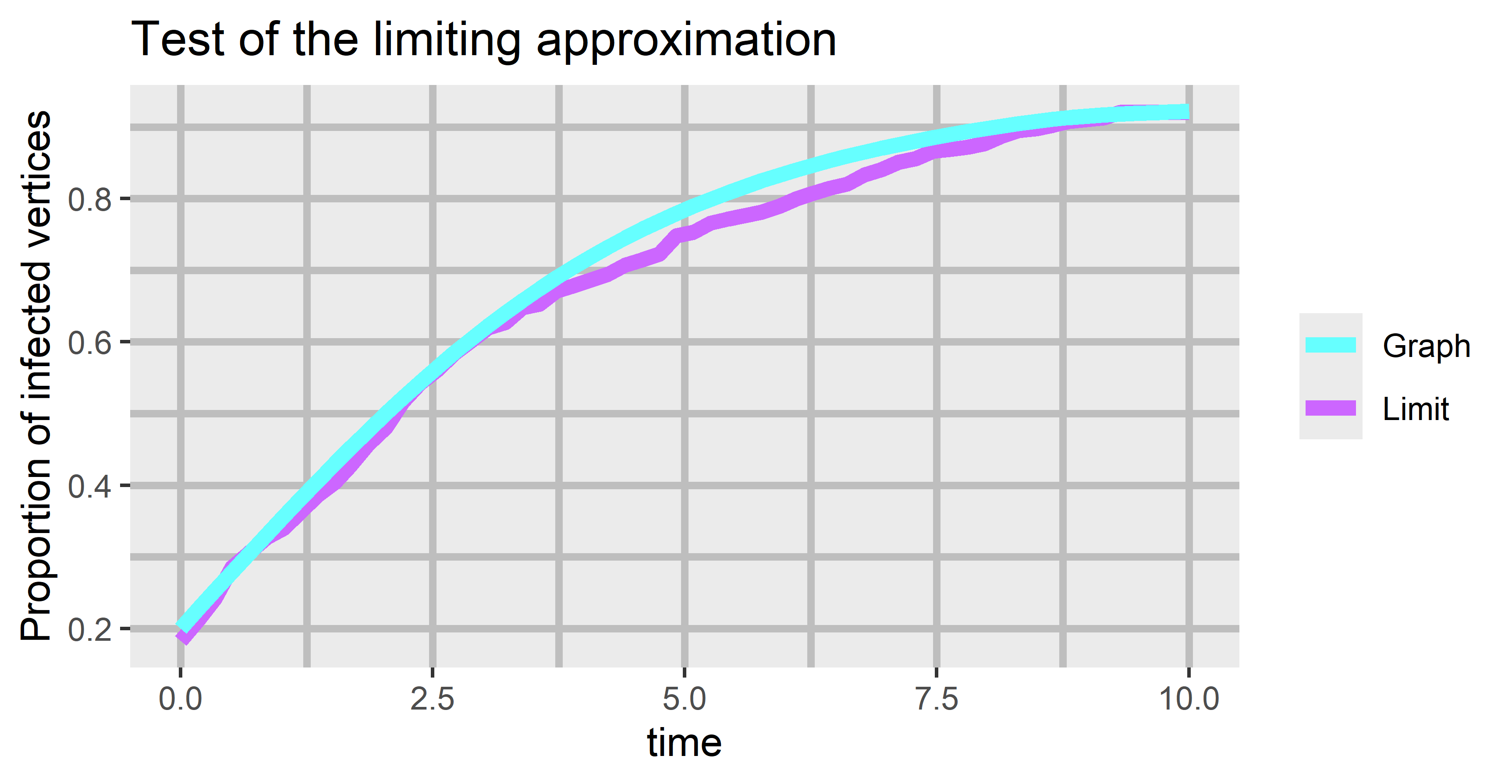}
             \caption[]%
             {{\small $\rho = 0.2, I=2, R=3$}}
     \end{subfigure}
     \hfill
     \begin{subfigure}[b]{0.3\textwidth}   
         \centering 
         \includegraphics[width=\textwidth]{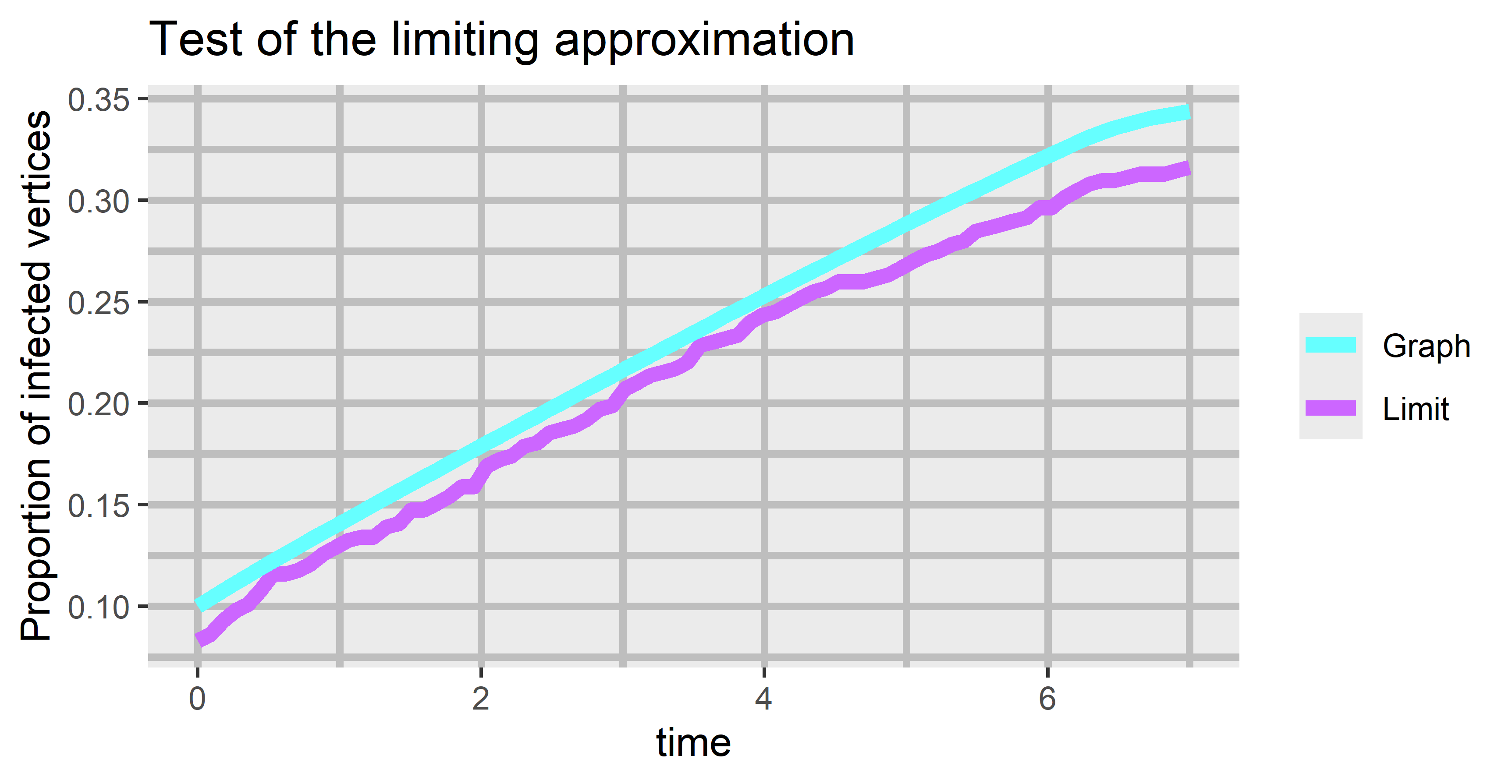}
             \caption[]%
             {{\small $\rho = 0.1, I=5, R=4$}}
     \end{subfigure}
     \caption{Comparison of the SIR epidemic time progression on the dynamic Erd\H{o}s-R{\'e}nyi random graph with $\gamma=3$ and $n=25 000$ and its dynamic local limit. The parameters $I$ and $R$ represent the rates of $D_I$ and $D_R$, respectively, which are assumed to follow exponential distributions. We have performed $500$ runs of simulations in each case, for various values of $\rho$, $I$ and $R$, explained under the plots.}
     \label{fig:plots_2}
\end{figure}

\noindent\textbf{Impact of dynamics on the epidemic curve.} We proceed to compare epidemic curves on static and dynamic random graphs. It is often hypothesised that graph dynamics accelerate the epidemic time progression (see Section \ref{sec_simulations_details}) and our simulation output strongly supports this conjecture. We present plots of predictions of SIR epidemic curves on static and dynamic graphs with the same stationary degree distribution for two static dynamic graph models: the static and dynamic Erd\H{o}s-R{\'e}nyi random graphs (Figure \ref{fig:plots_1}) and the static and dynamic random intersection graphs (Figure \ref{fig:plots_3}). We refer to Section \ref{sec_simulations_details} for more details on the models and their local limits.\\

\begin{figure}[h]
    \centering
    \begin{subfigure}[b]{0.3\textwidth}   
         \centering 
         \includegraphics[width=\textwidth]{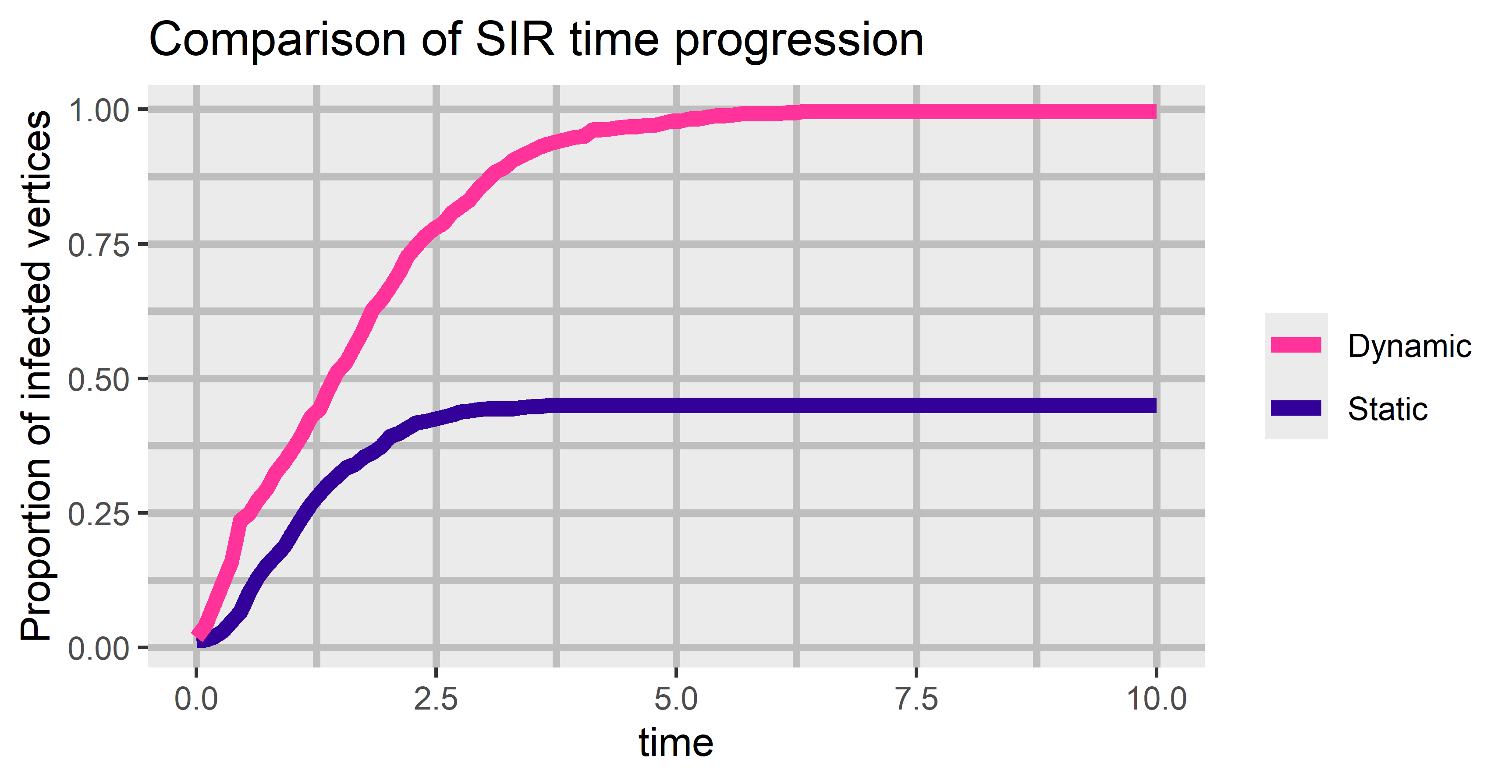}
             \caption[]%
             {{\small $I=0.5, R=3$}}    
     \end{subfigure}
     \hfill
     \begin{subfigure}[b]{0.3\textwidth}   
         \centering 
         \includegraphics[width=\textwidth]{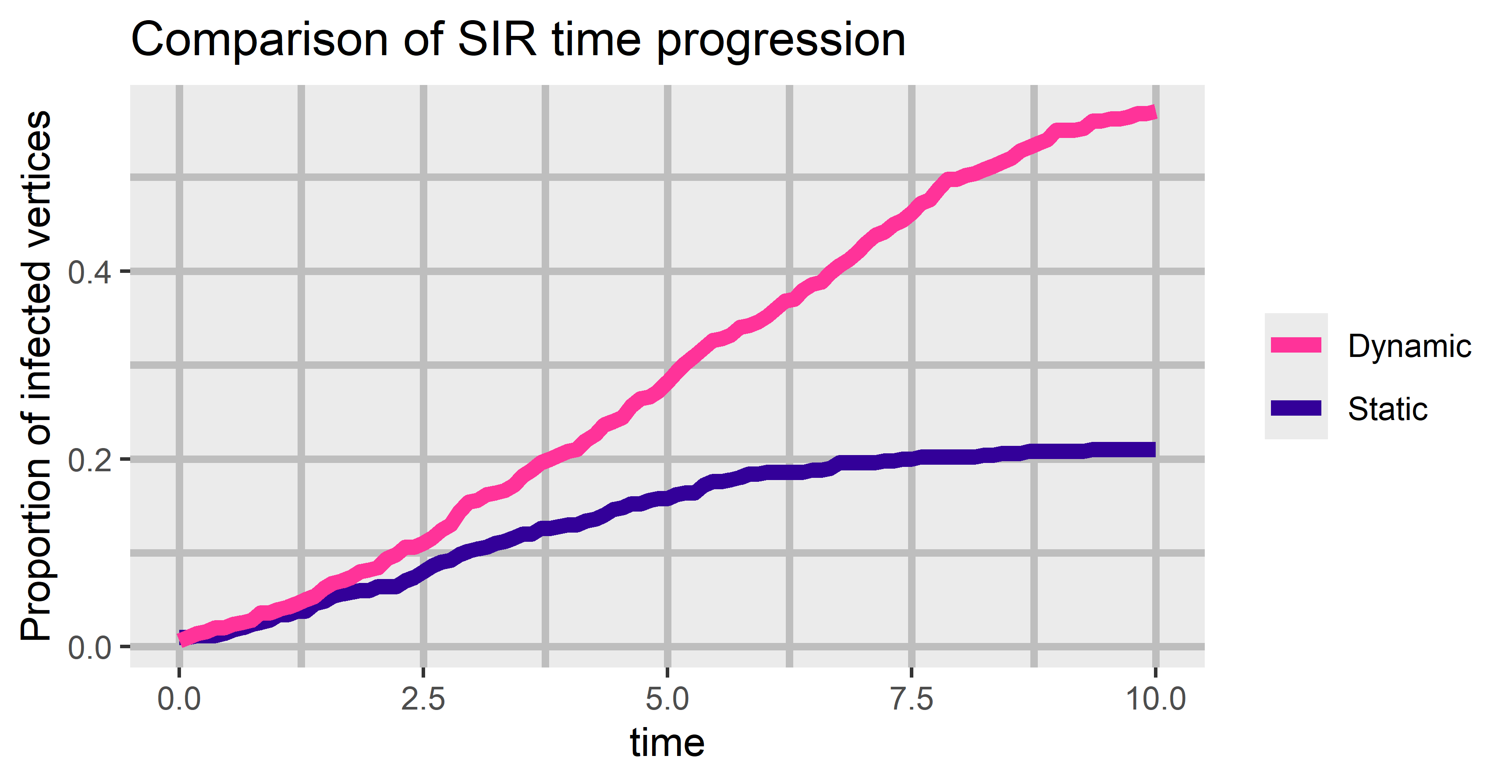}
             \caption[]%
             {{\small $I=2, R=3$}}
     \end{subfigure}
     \hfill
     \begin{subfigure}[b]{0.3\textwidth}   
         \centering 
         \includegraphics[width=\textwidth]{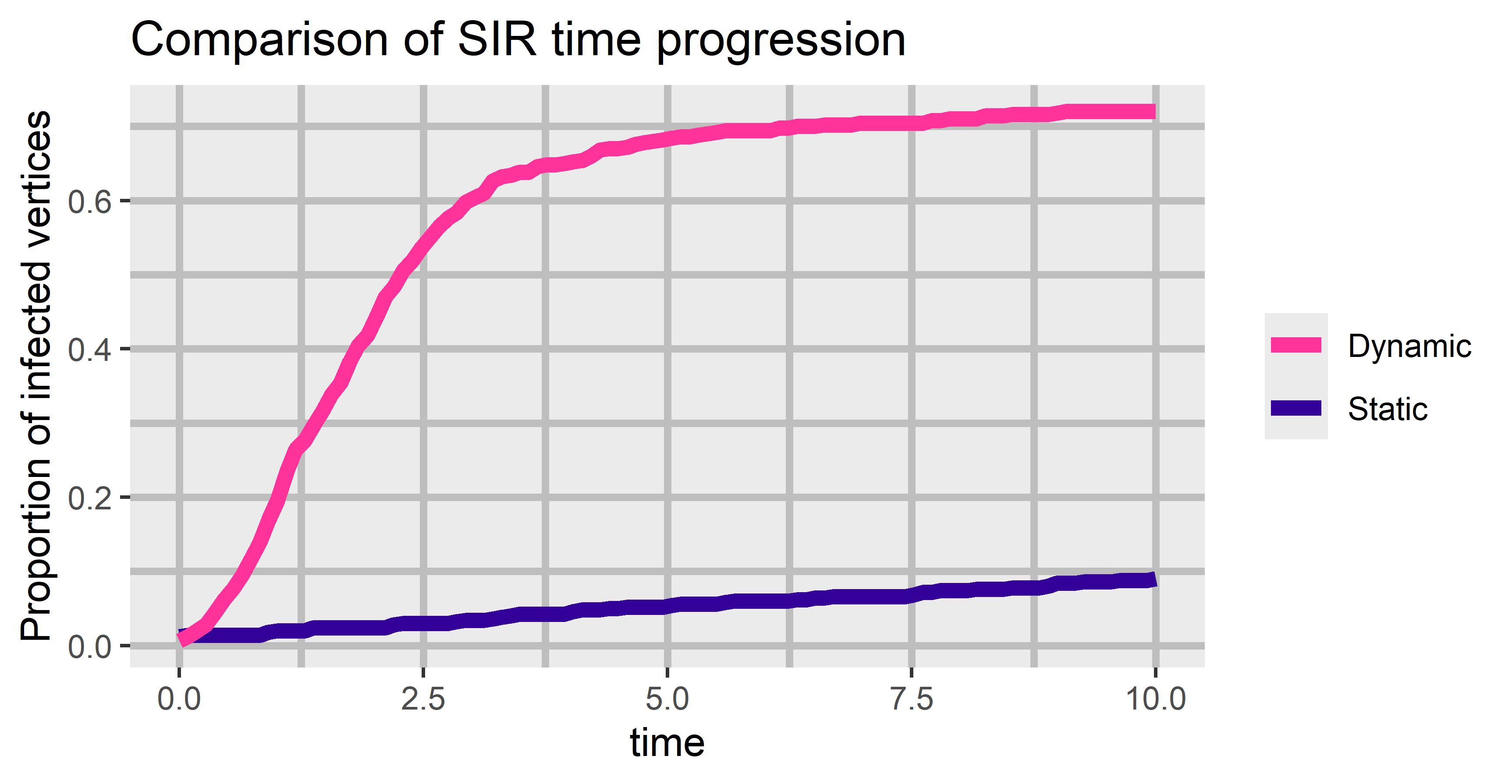}
             \caption[]%
             {{\small $I=4, R=5$}}    
     \end{subfigure}
     \caption{Comparison of the SIR epidemic time progression on static and dynamic Erd\H{o}s-R{\'e}nyi random graphs with $\gamma=3$ and $\rho=0.01$. The parameters $I$ and $R$ represent the rates of $D_I$ and $D_R$, respectively, which are assumed to follow exponential distributions. We have performed $500$ runs of simulations for both static and dynamic graphs for various values of $I$ and $R$, explained under the plots.}
     \label{fig:plots_1}
\end{figure}

\begin{figure}[h]
    \centering
    \begin{subfigure}[b]{0.3\textwidth}   
         \centering 
         \includegraphics[width=\textwidth]{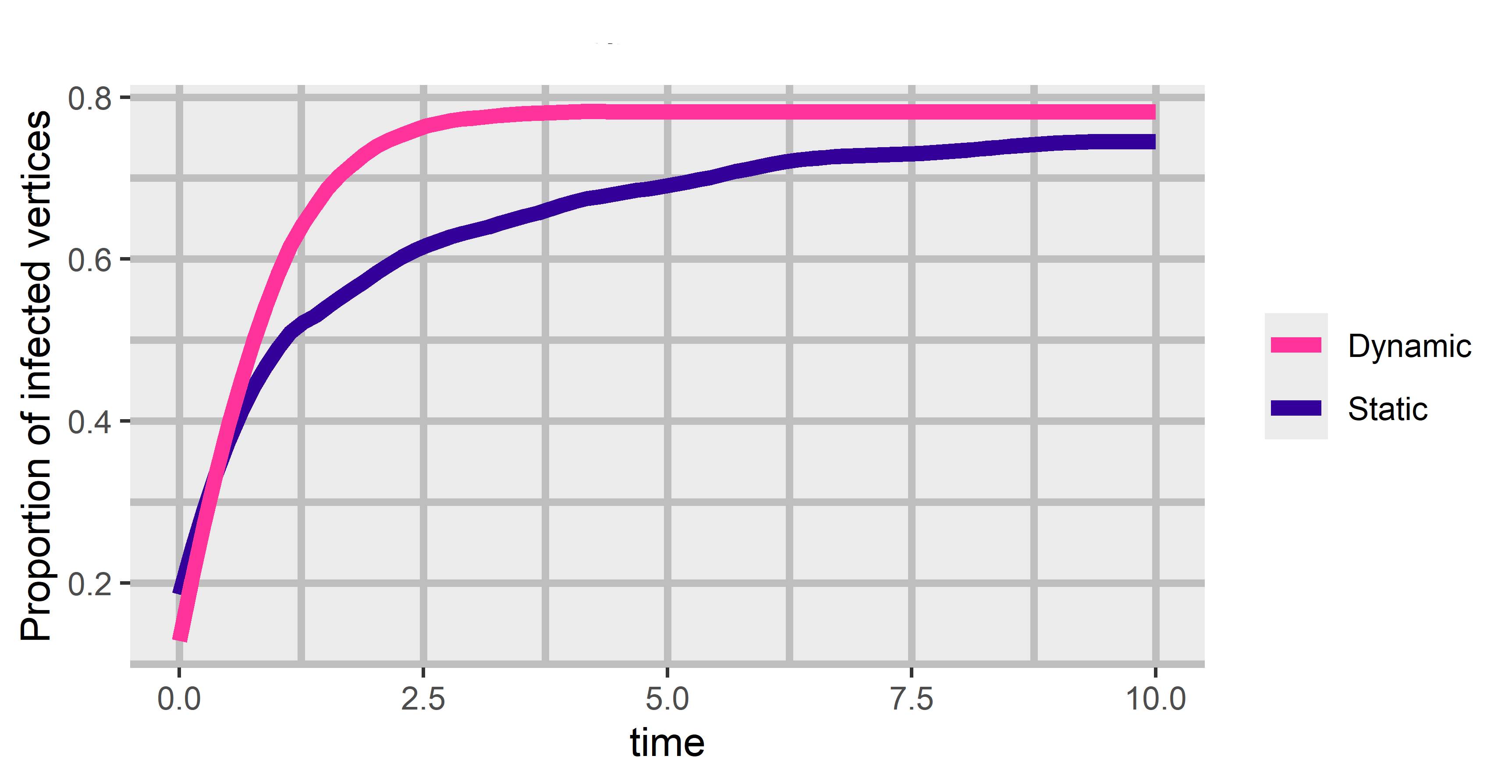}
             \caption[]%
             {{\small $I=0.5, R=3$}}    
     \end{subfigure}
     \hfill
     \begin{subfigure}[b]{0.3\textwidth}   
         \centering 
         \includegraphics[width=\textwidth]{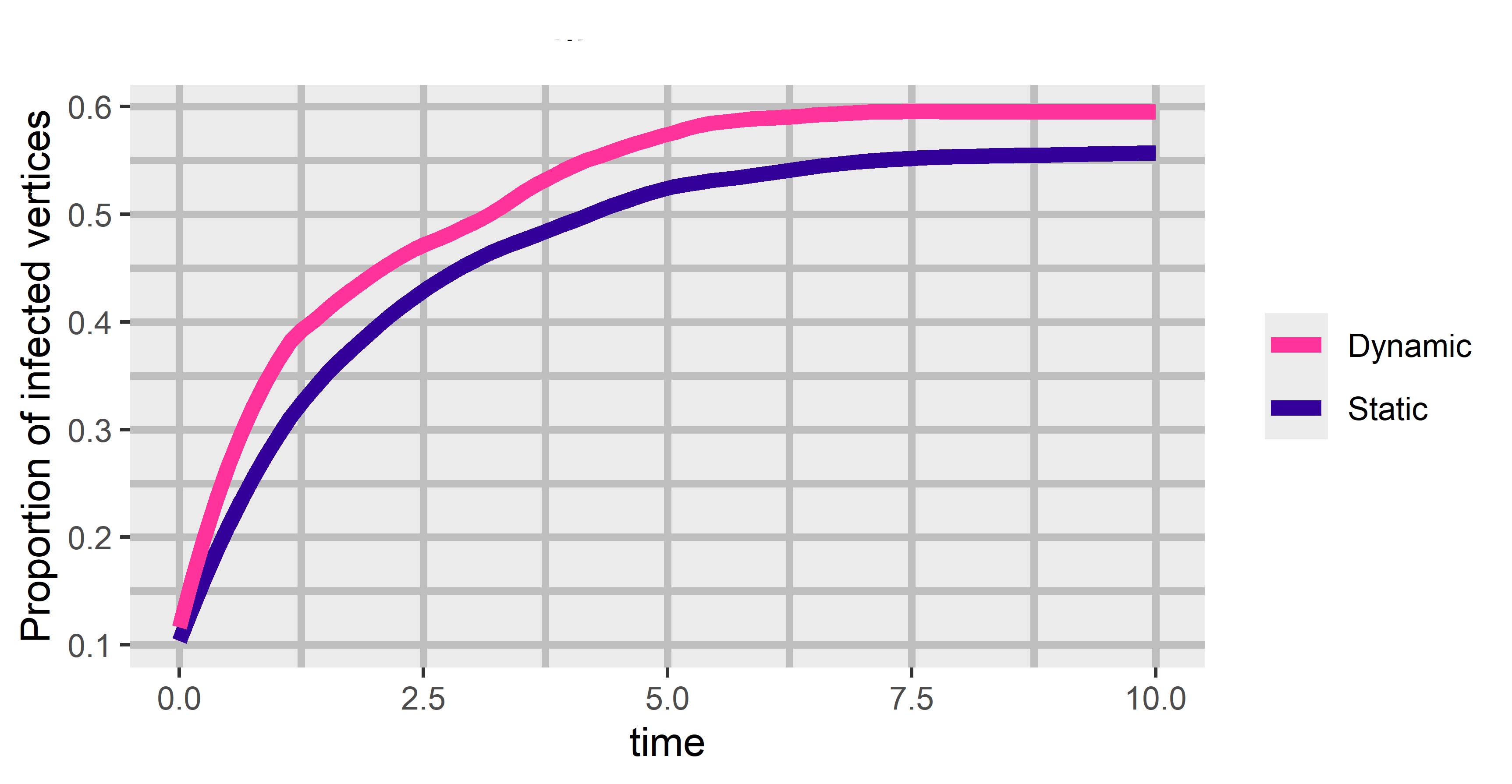}
             \caption[]%
             {{\small $I=2, R=3$}}
     \end{subfigure}
     \hfill
     \begin{subfigure}[b]{0.3\textwidth}   
         \centering 
         \includegraphics[width=\textwidth]{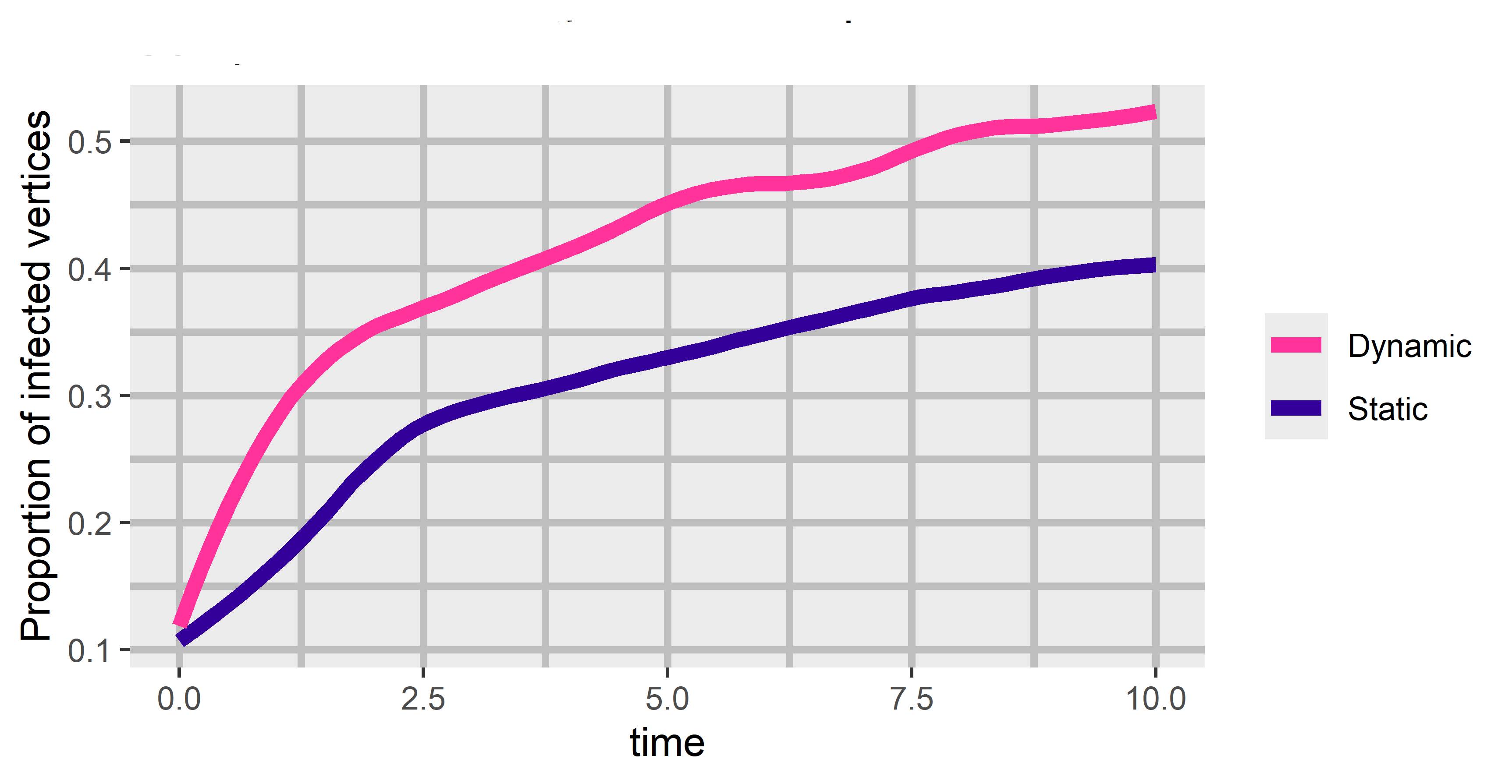}
             \caption[]%
             {{\small $I=5, R=4$}}    
     \end{subfigure}
     \caption{Comparison of the SIR epidemic curve on local limits of static and dynamic random intersection graphs with $\mathbf{E}[D_n]=2.052$ and $\rho=0.1$. The parameters $I$ and $R$ represent the rates of $D_I$ and $D_R$, respectively, which are assumed to follow exponential distributions. We have performed $500$ runs of simulations for both static and dynamic graphs for various values of $I$ and $R$, explained under the plots.}
     \label{fig:plots_3}
\end{figure}

\subsection{Discussion and conclusion}

In this work, we have extended the framework of epidemic modelling on static random graphs to dynamic random graphs, incorporating the concept of local time-marked union convergence. The main theorem establishes that, under the assumption of this stronger convergence, the SIR epidemic process on dynamic graphs converges to its counterpart on the time-marked local limit. This result generalizes existing findings and offers a flexible methodology for studying epidemics on dynamic networks without specifying a particular graph model.

Our theoretical exploration reveals the critical role of dynamic graph structures in modelling disease spread. While local convergence is sufficient to understand structural aspects in static graphs (see \cite{Alimohammadi2024}), dynamic settings require a more refined approach to capture the full complexity of epidemic processes. The introduction of the union graph, which aggregates all vertices connected to a root over time, provides a solution by offering control over indirect connections and temporal dynamics.

The simulation results further validate our theoretical contributions. By testing the dynamic Erd\H{o}s-R{\'e}nyi graph and its time-marked union local limit, we demonstrated that the latter provides a highly accurate approximation. This approximation enables us to perform efficient simulations by working with the local limit, reducing computational costs without sacrificing accuracy. These simulations open the door to addressing various intriguing questions about epidemics on dynamic graphs. One such question is the influence of graph dynamics on epidemic progression, which our results help to illuminate. Specifically, comparisons between static and dynamic graphs reveal how network dynamics shape the epidemic curve, providing valuable insights into the role of temporal changes in accelerating or moderating disease spread. On graphs with low clustering behavior, the dynamics tend to accelerate the spread of the disease. However, on graphs with higher clustering, the effect is more nuanced and depends heavily on parameters such as infection and recovery rates.

In summary, this work establishes a robust theoretical framework for analyzing epidemics on dynamic random graphs and demonstrates its practical applicability through simulations. The results not only highlight the benefits of incorporating dynamic graph models but also open up new avenues for exploring more complex and realistic epidemic processes on evolving networks.

The framework presented here can naturally be extended to account for vertex-level attributes that influence the transmission times of edges emanating from a vertex, while maintaining conditional independence of edge transmission times given these attributes. Additionally, the same techniques can accommodate models with more compartments, such as SEIR, as long as relevant independence assumptions hold and reinfection is not present. It would also be valuable to conduct simulations using infection and recovery time distributions beyond the exponential case, further exploring the flexibility of the framework.

For future work, it would be interesting to investigate dynamic graphs whose structure evolves in response to the epidemic process, such as models with preventive rewiring. Preventive rewiring, where susceptible individuals sever or redirect connections to infected individuals, has been shown to significantly influence epidemic outcomes. The authors of \cite{ball_britton_rewire} demonstrated that such strategies can either reduce the final epidemic size or, under certain conditions, unintentionally increase it. Exploring these dynamics further, particularly in more complex or realistic network settings, could provide valuable insights into the interplay between network adaptations and disease spread.

\section{Dynamic local convergence} \label{sec_locconv_theory}
In this section, we formally introduce the concepts that were discussed intuitively in Section \ref{sec_informal}. We also set the stage for the proof of the main result by introducing the key concepts. This involves a backward process, where we trace the epidemic's progression through past connections and incorporate the marks of the epidemic that encode crucial information like infection and recovery times.

\subsection{The metric space of rooted graphs}
Local convergence was introduced in \cite{Benjamini2001} and a few years later, independently, in \cite{Aldous2004}. It describes the resemblance of the neighbourhood of a vertex chosen uniformly at random to a certain limiting graph. To formalise this resemblance we introduce the notion of neighbourhoods and isomorphisms of graphs:
\begin{definition}[Rooted graph, isomorphism and $r$-neighbourhood] \label{rooted_graph_def}
\begin{enumerate} [(i)]
    \item We call a pair $(G, o)$ a \emph{rooted graph} if $G$ is a locally finite, connected graph and $o$ is a distinguished vertex of $G$. We denote the space of rooted graphs by $\mathcal{G}_{\star}$.
    \item Two graphs \(G_1 = (\mathcal{V}(G_1), \mathcal{E}(G_1))\) and \(G_2 = (\mathcal{V}(G_2), \mathcal{E}(G_2))\) are called \emph{isomorphic}, which we write as \(G_1 \simeq G_2\), when there exists a bijection \(\phi : \mathcal{V}(G_1) \to \mathcal{V}(G_2)\) such that \(\{u,v\} \in \mathcal{E}(G_1)\) precisely when \(\{\phi(u),\phi(v)\} \in \mathcal{E}(G_2)\).
    \item Similarly, two rooted graphs $(G_1, o_1)$, $(G_2, o_2)$ are \emph{rooted isomorphic} if there exists a graph-isomorphism between $G_1$ and $G_2$ that maps $o_1$ to $o_2$. We denote this isomorphism of rooted graphs by $(G_1,o_1)\simeq(G_2,o_2)$.
    \item For $r\in\mathbf{N}$, we define $B_r(G,o)$, the (closed) $r$-ball around $o$ in $G$ or $r$-neighbourhood of $o$ in $G$, as the subgraph of $G$ spanned by all vertices of graph distance at most $r$ from $o$. We think of $B_r(G,o)$ as a rooted graph with root $o$.
\end{enumerate}
\end{definition}
The notion of graph isomorphism enables us to define a metric on the space of rooted graphs:
\begin{definition}[Metric on rooted graphs] \label{def_metric_rooted}
Let $(G_1,o_1)$ and $(G_2,o_2)$ be two rooted connected graphs, and write $B_r(G_i,o_i)$ for the neighbourhood of vertex $o_i\in V(G_i)$. Let
\begin{align}
    R^{\star}=\sup\{r:B_r(G_1,o_1) \simeq B_r(G_2,o_2)\},
\end{align}
and define
\begin{align}
    \text{d}_{\mathcal{G}_{\star}}\big((G_1,o_1),(G_2,o_2) \big) = \frac{1}{R^{\star}+1}.
\end{align}
\end{definition}
It is important to note that when we refer to the space 
$\mathcal{G}_{\star}$ of rooted graphs under the metric $\text{d}_{\mathcal{G}_{\star}}$, this metric is defined on the set of equivalence classes in $\mathcal{G}_{\star}$, where the equivalence relation is determined by rooted isomorphisms. That is, two rooted graphs are considered equivalent if there exists an isomorphism between them that maps the root of one to the root of the other. By considering $\mathcal{G}_{\star}$ modulo this equivalence relation, the space $\left(\mathcal{G}_{\star}, \text{d}_{\mathcal{G}_{\star}}\right)$ becomes separable and complete and thus Polish (for a proof see \textup{\cite[Appendix A]{Hofstad2023}}).

\subsection{Local time-marked union convergence} \label{sec_loc_time_mark_un_conv}
To study epidemics on dynamic (random) graphs using local convergence, it is necessary to formalize a framework for discussing the local convergence of dynamic random graphs. Unlike the static case, dynamic graphs admit multiple notions of local convergence (refer to Section \ref{sec_techniqualities} for details). In this section, we introduce one such notion, termed local time-marked union convergence, which is essential for establishing the result in Theorem \ref{thm_dyn_conv_epid}. We start by formally introducing the union graph and its marks, which we previously described informally in Section \ref{sec_informal}.

\subsubsection{Marked union graph}

Our objective is to investigate the spread of an epidemic by means of a \emph{backward process} (see Section \ref{sec_backward_process}), which, for any vertex $v \in [n]$, traces its history backward in time to identify its infection time. Since tracking an epidemic on a dynamic graph $\left(G_n^s\right)_{s \in [0,T]}$ in this manner requires full knowledge of all dynamic trajectories up to the time of interest, we must gather all edges that became ON in $\left(G_n^s\right)_{s \in [0,T]}$ up to that point. To achieve this, we first introduce the concept of a \emph{union graph}, which consolidates all the edges that have been ON up to a given time threshold:

\begin{definition}[Union graph]
Let $\big(G_n^s\big)_{s\in[0,T]}=\big(([n], \mathcal{E}^s_n)\big)_{s\in[0,T]}$ be a dynamic graph process. We call the pair $([n], \mathcal{E}_n^{[0,T]})$, with $\mathcal{E}_n^{[0,T]} = \cup_{s\in[0,T]}\mathcal{E}^s_n$, the corresponding union graph up to time $T$, which we will denote $G_n^{[0,T]}$.
\end{definition}

Recall that in Section \ref{SIR_setup}, we set up the SIR epidemic so that the transmission time of an edge is initialized once the edge becomes ON for the first time, and the transmission must occur within one of the edge's ON periods to complete the disease spread. Therefore, collecting all edges is not enough to identify the infection times of vertices in a dynamic graph; we also need precise information about the activity times of these edges. For this reason, we introduce the notion of \emph{marks}. Marks in graphs allow us to record additional information about vertices and edges. We first provide a formal definition of marked graphs in general terms and then specify the time marks relevant to our setting:

\begin{definition}[Marked graphs] \label{def_marked_graphs}
Let $\mathcal{G}$ denote the set of all locally finite multi-graphs on a countable (finite or countably infinite) vertex set. A marked multi-graph is a multi-graph $G =(\mathcal{V}(G), \mathcal{E}(G))$, $G\in\mathcal{G}$, together with a set $\mathcal{M}(G)$ of marks taking values in a complete separable metric space $\Xi$, called the mark space, and containing the special symbol $\varnothing$ which is to be interpreted as “no mark”. $\mathcal{M}$ maps from $\mathcal{V}(G)$ and $\mathcal{E}(G)$ to $\Xi$. Images in $\Xi$ are called marks. Each edge is given two marks, one associated with (‘at’) each of its endpoints, in particular, $\mathcal{M}(v) \in \Xi$ for $v \in \mathcal{V}(G)$, and for $\{u,v\} \in \mathcal{E}(G)$, $\mathcal{M}(\{u,v\},u) \in \Xi$ and $\mathcal{M}(\{u,v\},v) \in \Xi$. We denote the set of graphs with marks from the mark space $\Xi$ by $\mathcal{G}(\Xi)$.
\end{definition}
We now move from the general notion of marked graphs to the specific context of dynamic graphs, where the marks represent the activity times of edges. In our setting, the temporal dynamics of the graph, including when edges become ON or OFF, is of central importance for understanding how the epidemic spreads. Thus, we define time marks on dynamic graphs to capture these time-varying behaviors. The following definition formalizes how these time marks are assigned to edges in a sequence of dynamic graphs:

\begin{definition}[Time marks of dynamic graphs] \label{def_time_marks_dyn_graphs}
Let $\big(G_n^s\big)_{s\in[0,T]}=\big(([n], \mathcal{E}^s_n)\big)_{s\in[0,T]}$ be a sequence of dynamic graphs. We specify the mark space $\Xi=\bigcup_{i=1}^{\infty} \left([0,T] \times [0,T]\right)^i$ and the mark set $\mathcal{M}\left(\big(G_n^s\big)_{s\in[0,T]}\right):\big(\mathcal{E}^s_n)\big)_{s\in[0,T]} \mapsto \bigcup_{i=1}^{\infty} \left(\{(s,t):0\leq s \leq t \leq T\}\right)^i$ determined by $\mathrm{ON}$ and $\mathrm{OFF}$ times of all edges that are $\mathrm{ON}$ at least once in $[0,T]$. Hence, for any edge $e\in \big(\mathcal{E}^s_n\big)_{s\in[0,T]}$,
\begin{align}
    \mathcal{M}(e) = \left(\left(\sigma^{e}_{i,\text{\rm{ON}}}, \sigma^{e}_{i,\text{\rm{OFF}}} \right)\right)_{i=1}^{N(e)},
\end{align}
with $N(e)$ denoting the total number of times edge $e$ switches $\mathrm{ON}$ in $[0,T]$ and $\sigma^{e}_{i,\text{\rm{ON}}}, \sigma^{e}_{i,\text{\rm{OFF}}}$ denoting the $i$-th time the edge $e$ switches $\mathrm{ON}$ and $\mathrm{OFF}$, respectively, in $[0,T]$. We fix $\sigma^{e}_{i,\text{\rm{ON}}}=0$ if $e\in \mathcal{E}^0_n$ (i.e., $e$ is $\mathrm{ON}$ at $0$) and, given $N(e)\geq1$, $\sigma^{e}_{i,\text{\rm{OFF}}}=T$ if $e$ switches $\mathrm{OFF}$ for the $i$-th time after $T$.
\end{definition}
We now formalise the concept of a union graph corresponding to some dynamic graph, equipped with time marks given by the edge activity in this dynamic graph:
\begin{definition}[Time-marked union graph] \label{def_time_marked_union_graph}
Let $\big(G_n^s\big)_{s\in[0,T]}=\big(([n], \mathcal{E}^s_n)\big)_{s\in[0,T]}$ be a sequence of dynamic random graphs. We define the time-marked union graph given by $\big(G_n^s\big)_{s\in[0,T]}$ as the pair
\begin{align*}
    \left(G_n^{[0,T]},\left(\left(\sigeon{e}{i},\sigeoff{e}{i}\right)_{i=1}^{N(e)}\right)_{e:\text{$e$ $\rm{ON}$ in $[0,T]$}}\right),
\end{align*}
where $G_n^{[0,T]}$ is the corresponding union graph and $\left(\left(\sigeon{e}{i},\sigeoff{e}{i}\right)_{i=1}^{N(e)}\right)_{e:\text{$e$ $\rm{ON}$ in $[0,T]$}}$ are the time marks of $\big(G_n^s\big)_{s\in[0,T]}$ as introduced in Definition \ref{def_time_marks_dyn_graphs}. To simplify the notation, in the remainder of this paper we denote this pair by $\markedunion$. Further, denote the rooted time-marked union graph determined by $\big(G_n^s\big)_{s\in[0,T]}$ and a uniformly chosen vertex $o_n$ by 
\begin{align*}
    \left(G_n^{[0,T]},o_n,\left(\left(\sigeon{e}{i},\sigeoff{e}{i}\right)_{i=1}^{N(e)}\right)_{e:\text{$e$ $\rm{ON}$ in $[0,T]$}}\right).
\end{align*}
Again in order to simplify the notation, in the remainder of this paper we denote the above by $\markedunionroot$.
\end{definition}
In the remainder of this paper, we will often condition on the dynamic graph and utilize the fact that this conditioning allows us to determine the time-marked union graph corresponding to the dynamic graph. The comprehensive information contained in the time-marked union graph enables us to effectively track the infection times of vertices in the original dynamic graph through a backward process. In Section \ref{sec_backward_process}, we explain how to perform this backward epidemic process on a time-marked union graph.

\subsubsection{Local time-marked union convergence}
We now proceed to define local time-marked union convergence. To establish this, we first generalize Definitions \ref{rooted_graph_def} and \ref{def_metric_rooted} to the setting of rooted marked graphs (recall Definition \ref{def_marked_graphs}):

\begin{definition}[Rooted marked graph and $r$-neighbourhood]
\begin{enumerate}[(i)]
    \item We choose a vertex $o$ in a marked graph $(G,\mathcal{M}(G))$ to be distinguished as the root. We denote the rooted marked graph by $(G,o,\mathcal{M}(G))$.\\
    We also denote the set of rooted marked graphs by $\mathcal{G}_{\star}(\Xi)$. A random rooted marked graph is a random variable taking values in $\mathcal{G}_{\star}(\Xi)$.
    \item The (closed) ball $B_r(G,o,\mathcal{M}(G))$ can be defined analogously to the unmarked graph ball (Definition \ref{rooted_graph_def} (iii)), by restricting the mark function to the subgraph as well.
\end{enumerate}
\end{definition}
Having formally defined the concept of a rooted marked graph and its $r$-neighbourhood, we now introduce a metric on the space of such graphs. This metric is crucial for formalizing the notion of convergence in this setting:

\begin{definition}[Metric on rooted marked graphs with continuous marks] \label{metric_marked_graphs}
Let $\text{d}_{\Xi}$ be a metric on the space of marks $\Xi$. Let
\begin{align}
    R^{\star}= \sup\{&r:B_r(G_1,o_1) \simeq B_r(G_2,o_2),\hspace{0.2cm} \text{and there exists $\phi$ such that}\\
    &\text{d}_{\Xi}((\mathcal{M}_1(i),\mathcal{M}_2(\phi(i))) \leq 1/r \hspace{0.15cm} \forall i\in V(B_r(G_1,o_1)),\nonumber\\
    &\text{d}_{\Xi}(\mathcal{M}_1((i,j)),\mathcal{M}_2(\phi(i,j))) \leq 1/r \hspace{0.15cm} \forall \{i,j\}\in E(B_r(G_1,o_1)) \nonumber
    \},
\end{align}
with $\phi:V(B_r(G_1,o_1))\longrightarrow V(B_r(G_2,o_2))$ running over all rooted isomorphisms between $B_r(G_1,o_1)$ and $B_r(G_2,o_2)$ that map $o_1$ to $o_2$. Then define
\begin{align}
    \text{d}_{\mathcal{G}_{\star}}\big((G_1,o_1,\mathcal{M}(G_1)),(G_2,o_2,\mathcal{M}(G_2)) \big) = \frac{1}{R^{\star}+1}.
\end{align}
This turns $\mathcal{G}_{\star}(\Xi)$ into a Polish space, i.e., a complete, separable metric space.
\end{definition}

In the following we also precisely specify a metric on time marks of dynamic graphs, consistently with Definition \ref{metric_marked_graphs}. This metric is crucial for formalizing the notion of local time-marked union convergence:

\begin{definition}[Metric on the time marks of dynamic graphs] \label{def_metric_marks_dyn_graphs}
Let $\big(G_n^s\big)_{s\in[0,T]}=\big(([n], \mathcal{E}^s_n)\big)_{s\in[0,T]}$ and $\big(\tilde{G}_{\tilde{n}}^s\big)_{s\in[0,\tilde{T}]} = \big(([\tilde{n}], \tilde{\mathcal{E}}^s_{\tilde{n}})\big)_{s\in[0,\tilde{T}]}$ be two distinct dynamic graphs. Let $\mathcal{M}$ and $\tilde{\mathcal{M}}$ denote the time mark sets of $\big(G_n^s\big)_{s\in[0,T]}$ and $\big(\tilde{G}_{\tilde{n}}^s\big)_{s\in[0,\tilde{T}]}$ respectively, as defined in Definition \ref{def_time_marks_dyn_graphs}. Then, for two edges $e\in\bigcup_{s\in[0,T]}\big(\mathcal{E}^s_n)\big), \Tilde{e}\in\bigcup_{s\in[0,T]}\tilde{\mathcal{E}}^s_{\tilde{n}}$, we define the distance between their time mark sets $\mathcal{M}(e)$ and $\tilde{\mathcal{M}}(\Tilde{e})$ as
\begin{align} \label{metric_on_marks}
    \text{d}_{\Xi}\left(\mathcal{M}(e),\tilde{\mathcal{M}}(\Tilde{e})\right) = |N(e)-N(\Tilde{e})|+ \sum_{i=1}^{\min\{N(e),N(\Tilde{e})\}} \left(|\sigma^{e}_{i,\text{\rm{ON}}}-\sigma^{\Tilde{e}}_{i,\text{\rm{ON}}}|+ |\sigma^{e}_{i,\text{\rm{OFF}}}-\sigma^{\Tilde{e}}_{i,\text{\rm{OFF}}}| \right).
\end{align}
\end{definition}
The above definition provides a metric structure for rooted time-marked union graphs, enabling us to rigorously define their convergence. This notion adapts the well-established concept of local convergence for marked graphs to the specific setting of union graphs with time marks as introduced above, given by dynamic graphs. Building on this framework, we now present the concept of local time-marked union convergence,  which plays a fundamental role in our results:

\begin{definition}[Local time-marked union convergence in probability] \label{def_mark_union_conv_probab} Let $\big(G_n^s\big)_{s\in[0,T]}=\big(([n], \mathcal{E}^s_n)\big)_{s\in[0,T]}$ be a sequence of dynamic random graphs with fixed size $n\to \infty$, and let $o_n$ be a uniformly chosen vertex. Let $\big(G^s,o\big)_{s\in[0,T]}$ denote a random element (with arbitrary distribution) of the set of rooted dynamic graphs. Denote the rooted time-marked union graph $\left(G^{[0,T]},o,\left(\left(t^e_{\sss i,\mathrm{ON}},t^e_{\sss i,\mathrm{OFF}}\right)_{i=1}^{N(e)}\right)_e\right)$ by $\marklim$. Then, the rooted time-marked union graph $\markedunionroot$ defined in Definition \ref{def_time_marked_union_graph} converges locally in probability to $\marklim$, if for any fixed rooted marked graph $\left(H,o,\left(\left(\bar{t}^e_{\sss i,\mathrm{ON}},\bar{t}^e_{\sss i,\mathrm{OFF}} \right)_{i=1}^{N(e)}\right)_e\right)$ denoted by $\hmarkedroot$ and $r\in\mathbf{N}$,

\begin{align} \label{cond_def_fdd_strong_conv_probab}
\mathbf{P}&\bigg(\text{d}_{\mathcal{G}_{\star}}\bigg(\markedunionroot, \hmarkedroot \bigg) \leq \frac{1}{r+1} \mid \left(G_n^s \right)_{s\in[0,T]}\bigg)\nonumber\\
    &:=\frac{1}{n} \sum_{v\in[n]} \mathds{1}_{\big\{\text{d}_{\mathcal{G}_{\star}}\big(\markedunionv,\hmarkedroot\big) \leq \frac{1}{r+1}\big\}}\\
    &\stackrel{\mathbf{P}}{\longrightarrow} \hspace{0.1cm} \mathbf{P}\bigg(\text{d}_{\mathcal{G}_{\star}}\bigg(\marklim, \hmarkedroot\bigg) \leq \frac{1}{r+1}\bigg). \nonumber
\end{align}
\end{definition}

\begin{remark} [Static marked graphs]
This section primarily focused on extending the framework of marked local convergence to the setting of dynamic graphs, where marks arise from dynamic edge activity. However, Definitions \ref{def_marked_graphs} and \ref{metric_marked_graphs} are formulated in broad generality and are equally applicable to static graphs, for which they were originally defined. In the static context, marks naturally do not relate to dynamic edge activity but can encode various types of information, such as membership in partitions (e.g., in bipartite graphs; see Section \ref{sec_drig_stat}). As noted prior to Definition \ref{def_mark_union_conv_probab}, the notion of marked local convergence is already well-established for static marked graphs (see \cite[Section 2.3.5]{Hofstad2023}) and is formulated in a manner analogous to Definition \ref{def_mark_union_conv_probab}. Thus, while marked local convergence serves as a standard tool for analyzing static graphs, this work demonstrates how to adapt the concept to the dynamic setting, enabling its application to the study of dynamic graphs.
\end{remark}

\subsection{Relationship to dynamic local convergence} \label{sec_techniqualities}
As previously noted, in the context of dynamic random graphs, it is possible to distinguish between various notions of local convergence. The approach discussed in Section \ref{sec_loc_time_mark_un_conv} involves aggregating all edges that were active within a specified time frame and examining the convergence of this marked union graph, where edges are marked by the times of their activity. This methodology effectively transforms a dynamic graph into a marked static one, thereby enabling the application of the well-established framework of marked local convergence to analyze dynamic structures.

Alternatively, one can consider local convergence of dynamic random graphs from a stochastic process perspective, wherein both the original graph and its limit remain inherently dynamic structures. This approach to dynamic local convergence was explored in \cite{Milewska2023}, where a dynamic random graph process was viewed as a dynamic random graph process as a sequence of functions in time: a collection of functions from $\mathbf{R}_+$ into the space of rooted graphs. Next, we took advantage of the fact that the space of rooted graphs is separable and complete under the metric $\text{d}_{\mathcal{G}_{\star}}$ (for a proof, see \textup{\cite[Appendix A.3]{Hofstad2023}}), and convergence of random processes that map onto a separable and complete space has been thoroughly investigated. We now briefly outline the background theory underlying this form of convergence.

Fix a metric space $(S,\text{d}_\text{S})$ that is separable and complete under the metric $\text{d}_\text{S}$. Consider random processes with sample paths in $D(\mathbf{R}_+,S)$ - the space of c{\`a}dl{\`a}g functions $f:\mathbf{R}_+ \longrightarrow S$ and equip $D(\mathbf{R}_+,S)$ with the Skorokhod $J_1$ topology. Classically, weak convergence of probability measures on $(D(\mathbf{R}_+,S),\mathcal{D})$, where $\mathcal{D}$ denotes the Borel $\sigma$-field generated by $D(\mathbf{R}_+,S)$, is defined as convergence of integrals of all bounded and continuous functions. Alternatively, weak convergence of probability measures on $(D(\mathbf{R}_+,S),\mathcal{D})$ can also be established through the convergence of finite-dimensional distributions together with tightness (see \textup{\cite[Theorem 13.1]{Billingsley2013}}). This, in turn, implies convergence of random processes with sample paths in $D(\mathbf{R}_+,S)$. Results obtained by Prohorov and Arzel{\`a}-Ascoli facilitate the verification of tightness: By Prohorov's theorem, in a separable and complete metric space, tightness and relative compactness in distribution are equivalent and the Arzel{\`a}-Ascoli theorem provides a characterization of relative compactness in the form of a bound on an appropriate modulus of continuity (see \textup{\cite[Theorem 13.2]{Billingsley2013}}). However, the mentioned modulus of continuity might not be easy to work with. For that reason, an alternative criterion for tightness is given in \textup{\cite[Theorem 13.3]{Billingsley2013}}. 
\begin{remark}
The cited \textup{\cite[Chapter 13]{Billingsley2013}} actually investigates convergence of processes with sample paths from $[0,1]$ to $\mathbf{R}$. However, \textup{\cite[Chapter 12]{Billingsley2013}} introduces preliminaries for \textup{\cite[Chapter 13]{Billingsley2013}}, where the author remarks that the provided theory naturally extends to processes taking values in other metric spaces, including a general separable and complete metric space. The domain of the sample paths can also be easily rescaled to $[0,T]$ for some $T\in\mathbf{R}_+$. Alternatively, \textup{\cite[Chapter 16]{kallenberg2002foundations}} treats processes with sample paths in $D(\mathbf{R}_+,S)$ directly and states analogous results (see \textup{\cite[Theorems 16.10, 16.12]{kallenberg2002foundations}}).
\end{remark}

When looking at $\big(G_n^s\big)_{s\in[0,T]}$ as a process in time, we choose a random root $o_n$ only \emph{once} and then we investigate the evolution of its neighbourhood in time. Applying the theory outlined above (\textup{\cite[Theorem 13.3]{Billingsley2013}}), we translate the convergence criteria into the context of dynamic random graphs. For the reader's convenience, we present this explicitly in the following definition:

\begin{definition}[Dynamic local weak convergence] \label{def_dyn_weak_conv}
Let $\big(G_n^s\big)_{s\in[0,T]}=\big(([n], \mathcal{E}^s_n)\big)_{s\in[0,T]}$ be a sequence of dynamic random graphs of a fixed size $n\to \infty$, and let $o_n$ be a vertex chosen uniformly at random from $[n]$. Let $\big(G^s,o\big)_{s\in[0,T]}$ denote a random element (with arbitrary distribution) of the set of rooted dynamic graphs, which we call a dynamic random rooted graph. Then, $\big(G_n^s\big)_{s\in[0,T]}$ \textbf{converges dynamically locally weakly} to $\big(G^s,o\big)_{s\in[0,T]}$ (i.e., converges weakly in $D(\mathbf{R}_+,S)$ in the Skorokhod $J_1$ topology, with $S = (\mathcal{G}_{\star},\text{d}_{\mathcal{G}_{\star}})$), if the following conditions hold:

\begin{enumerate}
\item The finite-dimensional distributions of $\big(G_n^s,o_n\big)_{s\in[0,T]}$ converge: 
    $\big(G_n^s,o_n\big)_{s\in[0,T]} \stackrel{fdd}{\longrightarrow} \big(G^s,o\big)_{s\in[0,T]}$, i.e., for all $s_1\leq s_2\leq\cdots\leq s_k\in[0,T]$ and for any fixed rooted graph sequence $\left((H_{1},o),\ldots,(H_{k},o)\right)$ and $r\in\mathbf{N}$,
    \begin{align} \label{cond_def_fdd_conv}
    \mathbf{P}\bigg(\forall j\in[k]: B_r\big(G_n^{s_j},o_n\big)\simeq (H_j,o)\bigg)&= \frac{1}{n}\mathbf{E}\Bigg[\sum_{i\in[n]}\mathds{1}_{\big\{\forall j\in[k]: B_r\big(G_n^{s_j},i\big)\simeq (H_j,o)\big\}} \Bigg]\nonumber \\
    &\longrightarrow \mathbf{P}\bigg(\forall j\in[k]: B_r\big(G^{s_j},o\big)\simeq (H_j,o)\bigg). \nonumber
    \end{align} 
    \item For every $T>0$, as $\delta \searrow 0$, \label{cond_tight_of_the_limit_dyn_weak_conv}
    \begin{align} \text{d}_{\mathcal{G}_{\star}}\bigg(\big(G(T),o\big),\big(G(T-\delta),o\big)\bigg) \stackrel{\mathbf{P}}{\longrightarrow} 0.
    \end{align}
    
    \item For all $\varepsilon, \eta>0$ there exists $n_0\geq1$ and $\delta>0$ such that for all $n\geq n_0$,
    \begin{align} \label{def_cond_tight_for_dyn_loc_lim_content}
    \mathbf{P}&\bigg(\sup_{(s,s_1,s_2)\in\mathscr{S}_{\delta}} \min\bigg[\text{d}_{\mathcal{G}_{\star}}\bigg(\big(G_n^{s_1},o_n\big),\big(G_n^s,o_n\big)\bigg), \text{d}_{\mathcal{G}_{\star}}\bigg(\big(G_n^s,o_n\big),\big(G_n^{s_2},o_n\big)\bigg)\bigg]>\varepsilon \bigg)\leq \eta,
    \end{align}
with $\mathscr{S}_{\delta} = \{(s,s_1,s_2):s\in[s_1,s_2], |s_2-s_1|\leq \delta\}$. Note that the above is equivalent to 
\begin{align} \label{def_dyn_loc_lim_tight_tobebounded}
    \mathbf{P}\bigg(\exists s\in\mathscr{T}_{\delta}: &B_{1/\varepsilon}\big(G_n^{s_1},o_n\big) \not\simeq  B_{1/\varepsilon}\big(G_n^s,o_n\big), B_{1/\varepsilon}\big(G_n^s,o_n\big) \not\simeq  B_{1/\varepsilon}\big(G_n^{s_2},o_n\big) \bigg)\leq \eta,
\end{align}
with $\mathscr{T}_{\delta} = \{(s_1,s_2): |s_2-s_1|\leq \delta\}$.
\end{enumerate}
\end{definition}

In Section \ref{sec_simulations_details}, we give examples of dynamic random graphs that converge locally in a time-marked union sense. However, the graphs presented there will also converge dynamically locally, in the sense of Definition \ref{def_dyn_weak_conv}. See our previous work \cite{Milewska2023}, where we studied the dynamic local convergence of the dynamic random intersection graph introduced in Definition \ref{def_dyn_drig}. This was achieved by first examining local marked union convergence. The dynamic local convergence of the dynamic Erd\H{o}s-R{\'e}nyi random graph from Definition \ref{dyn_er_def} can be derived as a consequence of the results presented in \cite{Milewska2023}. Another notable example is provided in \cite{dynweaklimit2023}, where the authors also introduce the concept of dynamic local convergence and apply it to a dynamic inhomogeneous random graph. However, their notion of dynamic local convergence differs from our classical stochastic process approach.

Dynamic local weak convergence can further be extended to dynamic local convergence in probability, analogous to the static setting. This extension is facilitated by the fact that, for the tightness conditions, convergence in distribution naturally implies convergence in probability. For a comprehensive discussion of this result, some consequences of dynamic local convergence, and a more detailed overview of the concept, we refer the reader to \cite{Milewska2025}.\\

We now turn to an intriguing observation made during our study of epidemics on dynamic random graphs. As we have mentioned, the concept of weak convergence is classically defined via the convergence of expectations of continuous and bounded functionals, a characterization formalized by Prokhorov's theorem. This notion extends to the local convergence of static graphs, where local weak convergence can be expressed as
\begin{align*}
\mathbf{E}[h(G_n,o_n)] \longrightarrow \mathbf{E}[h(G,o)],
\end{align*}
for continuous and bounded functionals $h:\mathcal{G}_{\star}\rightarrow\mathbf{R}$. For dynamic graphs, a similar concept holds, but special care is needed when defining the domain of such functionals.\\
In the case of static graphs, the relationship between the root $o_n$ and the rooted graph $(G_n,o_n)$ is straightforward: everything that affects $o_n$ lies within its connected component, which is fixed (even if randomly determined) once and for all.

However, for dynamic graphs, the situation is more subtle. Due to changes in edge statuses over time, a vertex may encounter many indirect connections. For instance, a neighbor of $o_n$ might have been connected to another vertex before connecting to $o_n$ and even if that edge switches $\rm{OFF}$ before $o_n$ becomes connected, this extra vertex could still influence $o_n$ (for example, in an epidemic process), despite never being part of its connected component at any specific time. This dynamic nature introduces complexity in specifying how to take functionals of dynamic graphs. As a result, the notion of dynamic local convergence, as introduced in \cite{Milewska2023}, translates to
\[
\mathbf{E}\left[h\left(\left(G^s_n, o_n\right)_{s\in[0,T]}\right)\right] \longrightarrow \mathbf{E}\left[h\left(\left(G^s,o\right)_{s\in[0,T]}\right)\right],
\]
for any continuous and bounded functional $h: D(\mathbf{R}_+, \mathcal{G}_\star) \to \mathbf{R}$, where $D(\mathbf{R}_+, \mathcal{G}_\star)$ is equipped with the Skorokhod $J_1$ topology. Here, the functional on the left-hand side depends only on the connected components of $o_n$ at each time $s\in[0,T]$. In contrast, the stronger notion of local time-marked union convergence stated in Definition \ref{def_mark_union_conv_probab}, requires convergence for all bounded and continuous functions that depend on the entire trajectory of changes in the union neighbourhood of $o_n$, as recorded by its time-marked union neighbourhood. This stronger form captures the full extent of the dynamic interactions, including all indirect influences, and is therefore essential to discuss epidemics on dynamic random graphs (see the proof of Proposition \ref{lem_conv_loc_approx}).

\begin{remark}
We remark that in common scenarios where the root $o_n$ is chosen uniformly at random from a large graph, such as the dynamic Erd\H{o}s-R{\'e}nyi random graph treated in Section \ref{sec_simulations_details} or the dynamic random intersection graph introduced in \cite{Milewska2023}, the two notions of convergence will hold. To explain it heuristically, note that the convergence of the connected components of a typical vertex at any point in time, combined with the tightness condition (which implies that the number of vertices encountering a significant amount of changes up to time $T$ is negligible), suggests that the probability of seeing any vertex experiencing a large number of changes or having a particularly large degree at any point in time is small. Consequently, the cumulative union neighbourhoods of most vertices remain bounded with high probability. However, this equivalence does not hold for all dynamic rooted graphs that converge locally in the sense of Proposition \ref{def_dyn_weak_conv}. As a counterexample, consider a dynamic graph where the root has two neighbours during the time interval $[1,2]$ and none at other times. One of these neighbors is connected to $n-3$ other vertices during $[0,0.5]$. Then the union neighbourhood of the root does not converge as $n$ becomes large, while the dynamic snapshots at any time point still do.
\end{remark}

\subsection{Backward process on a time-marked union graph} \label{sec_backward_process}
Determining infection times in a dynamic graph is significantly more challenging than in a static graph, as we must account for the constant changes in connections between vertices. However, by leveraging the time-marked union graph (recall Definition \ref{def_time_marked_union_graph}) and the fact that, with a positive proportion of initially infected vertices, paths to these vertices can be efficiently traced, we argue that a backward process, conditioned on the dynamic graph, can be performed similarly to a static graph in analogous situations. Nevertheless, the algorithm is more involved since we need to continuously check the dynamic status of the edges.

Fix a dynamic graph $\left(G^s\right)_{s\in[0,T]}$ with a known dynamic history. To simplify the notation, in the remainder of this paper, let $\markedg$ denote the corresponding time-marked union graph recording all the edges present in $[0,T]$ together with their ON and OFF times from Definition \ref{def_time_marked_union_graph}, i.e.,
\begin{align*}
    \markedg = \left(G^{[0,T]}, \left(\left( \teon{e}{i}, \teoff{e}{i}\right)_{\sss  i=1}^{\sss N(e)}\right)_{\sss e: \text{$e$ \rm{ON} in $[0,T]$}} \right).
\end{align*}
We now give a detailed description of the backward process on such a time-marked union graph. In Section \ref{sec_all_proofs_prelim} we will show how to apply this process to random dynamic graphs by conditioning on their history up to a certain time point.

Given $v\in[n]$ and $t\in[0,T]$, we perform the backward epidemic process started from a vertex $v$ at a time point $t$. First, we determine the recovery time $R_v$ of the vertex $v$ from $D_R$. Next, for each vertex $u\in[n]$ that has been a neighbour of $v$ at any time up to $t$, we draw its recovery time $R_u$ from $D_R$, as well as the transmission time $C(\{u,v\})$ between $u$ and $v$ from $D_I$. We also record the time activity of the edge $\{u,v\}$ (which we know once again because we have assumed full knowledge about the graph's dynamic trajectory). Should $\{u,v\}$ be active more than once up to time $t$, we record all its activity windows. For each edge $e$, we check whether the transmission time $C(\{u,v\})$ is less than the recovery time $ R_u $. If this condition is not met, transmission between $ u $ and $ v $ is impossible, and the edge is marked with an infinite infection time. We continue this breadth-first search backward from $ v $, marking edges with potential infection times based on their transmission times and setting these to infinity when the transmission time exceeds the recovery time, until we run out of edges. In this way, we identify the potential paths of the epidemic and first filter out impossible connections.

In the second part of the algorithm, we determine the actual infection times along the paths identified in the first part i.e., the transmission times $C(e)$ associated with the edges are tested against the time dynamic recorded by edge marks $\left(\left(\teon{e}{i}, \teoff{e}{i} \right)^{\sss N(e)}_{\sss i=1}\right)_{\sss e}$. The infection can only spread during the active periods of these edges. This time dynamic significantly complicates the situation because each possible infection path from an initially infected vertex to $v$ must be individually verified for dynamic feasibility. Each edge on these paths may receive different infection times for distinct paths that it is part of. This occurs because the infection timeline depends on the specific sequence of edges and their active periods for each path.

To give a precise description of this process, denote the set of all feasible paths from initially infected vertices to vertex $v$ identified in Part 1 of the backward process by $\mathscr{P}_v$. By feasible we mean all paths not containing any edges with infinite infection transmission marks: $\mathscr{P}_v:= \{\mathcal{P}: \forall e\in\mathcal{P}, C(e)\neq\infty\}$. Denote the number of edges in $\mathcal{P}\in\mathscr{P}_v$ by $|\mathcal{P}|$ and note that, apart from $v$, $\mathcal{P}$ has exactly $|\mathcal{P}|$ vertices. Note that vertices can coincide on two distinct paths $\mathcal{P}, \mathcal{P}'\in\mathscr{P}_v$. However, their infection times on each of these paths can be different due to the unique time dynamics of each path. For that reason we introduce notation which records information about the specific path $\mathcal{P}\in\mathscr{P}_v$ under consideration. We denote the $j$-th vertex on the path $\mathcal{P}\in\mathscr{P}_v$, counting from the origin (an initially infected vertex) towards $v$ by $u_j^{\mathcal{P}}$, so that $\mathcal{P} := \{\{u_1^{\mathcal{P}},u_2^{\mathcal{P}}\},\ldots,\{u_{|\mathcal{P}|}^{\mathcal{P}},v \}\}$, with $u_1^{\mathcal{P}}\in\mathcal{I}(o)$. Further, we denote the infection time of a vertex $u_j^{\mathcal{P}}$ on path $\mathcal{P}\in\mathscr{P}_v$ by $I_{u_j^{\mathcal{P}}}$. We now describe how to identify these infection times. First, since all paths in $\mathscr{P}_v$ start with an initially infected vertex, we set
\begin{align}
    I_{u_1^{\mathcal{P}}} = 0, \hspace{0.2cm} \forall \hspace{0.05cm} \mathcal{P}\in\mathscr{P}_v.
\end{align}
Subsequently, for all $j\in[|\mathcal{P}|]$, we define
\begin{align}
    s^0_{u_j^{\mathcal{P}}} = I_{u_j^{\mathcal{P}}}, \hspace{1cm} \text{if} \hspace{0.5cm} I_{u_j^{\mathcal{P}}} \in \bigcup_{\sss i=1}^{\sss N\left(e_j^{\mathcal{P}}\right)} \left[\teon{}{i},\teoff{e_j^{\mathcal{P}}}{i}\right],
\end{align}
with $e_j^{\mathcal{P}} = \{u_j^{\mathcal{P}},u_{j+1}^{\mathcal{P}}\}$, and
\begin{align}
    s^0_{u_j^{\mathcal{P}}} =
     \min_{i}\{\teon{e_j^{\mathcal{P}}}{i}:
     \teon{e_j^{\mathcal{P}}}{i}>  I_{u_j^{\mathcal{P}}}\}, \hspace{1cm} \text{if} \hspace{0.5cm} I_{u_j^{\mathcal{P}}} \notin \bigcup_{\sss i=1}^{\sss N\left(e_j^{\mathcal{P}}\right)} \left[\teon{e_j^{\mathcal{P}}}{i},\teoff{e_j^{\mathcal{P}}}{i}\right].
\end{align}
The above aligns with the transmission dynamics of the SIR epidemic model outlined in Section \ref{SIR_setup}. Specifically, the initialization time of the transmission process depends on whether the edge $e_{j-1}^{\mathcal{P}}$ is active at the time the vertex $u_{j-1}^{\mathcal{P}}$ gets infected. If the edge is active during this infection time, the initialization is immediate. Otherwise, the initialization occurs at the next activation time of the edge.
Then, for all $\mathcal{P}\in\mathscr{P}_v$ and $j\in\{2,\ldots,|\mathcal{P}|\}$,
\begin{align}
    I_{u_j^{\mathcal{P}}} =
     s^0_{u_{j-1}^{\mathcal{P}}} + C(e_{j-1}^{\mathcal{P}}),
\end{align}
if
\begin{align}
    s^0_{u_{j-1}^{\mathcal{P}}} + C(e_{j-1}^{\mathcal{P}}) \leq I_{u_{j-1}^{\mathcal{P}}} + R_{u_{j-1}^{\mathcal{P}}} \hspace{0.5cm} \text{and} \hspace{0.5cm} s^0_{u_{j-1}^{\mathcal{P}}} + C(e_{j-1}^{\mathcal{P}}) \in \bigcup_{\sss i=1}^{\sss N\left(e_{j-1}^{\mathcal{P}}\right)}\left[\teon{e_{j-1}^{\mathcal{P}}}{i},\teoff{e_{j-1}^{\mathcal{P}}}{i}\right],
\end{align}
where we recall that $R_{u_{j-1}^{\mathcal{P}}}$ denotes the recovery time of vertex $u_{j-1}^{\mathcal{P}}$. Otherwise, $I_{u_j^{\mathcal{P}}} = \infty$.\\

In summary, in the second part of the backward process, we follow each $\mathcal{P}\in\mathcal{P}_v$ from its starting point, which is an initially infected vertex, to vertex $v$, identifying all $\left(I_{u_j^{\mathcal{P}}}\right)_{j=1}^{|\mathcal{P}|}$ and treating them as edge weights with respect to $\mathcal{P}$.
Once we encounter the first infinite mark, we stop exploring the path any further and set $I_v^{\mathcal{P}}=\infty$, where by $I_v^{\mathcal{P}}=\infty$ we denote the infection time of vertex $v$ on path $\mathcal{P}\in\mathscr{P}_v$. If none of the intermediate infection times is infinite, then we compute
\begin{align}
    I_v^{\mathcal{P}} = I_{u_{|\mathcal{P}|}^{\mathcal{P}}} + C\left(\left\{u_{|\mathcal{P}|}^{\mathcal{P}},v\right\}\right).
\end{align}
Denote the actual infection time of $v$ by $T(v)$. Then,
\begin{align}
    T(v) = \min_{\mathcal{P}\in\mathscr{P}_v} I_v^{\mathcal{P}}.
\end{align}
In this way, we have successfully identified the infection time of $v$ in the time-marked union graph $\markedg$. Repeating these steps for all the other vertices, the backward process outputs the number of susceptible, infected and recovered vertices at time $t$. For details see Algorithm 1 below.
\begin{algorithm} 
\caption{Backward Investigation of Epidemic Process on Dynamic Graph}
\begin{algorithmic} \label{alg1}
\Require $v$: vertex, $t$: time point, $\mathcal{I}^{(\rho)}(0)$: set of initially infected vertices, time-marked union graph $\markedg$.\\
Let $O = \{\}$, $W = \{v\}$ be the list of observed and waiting-to-be-observed vertices, respectively.
\While {$W \neq \emptyset$}
    \State Let $u$ be the next vertex in $W$. Let $N^{[0,T]}(u)$ be the set of all neighbors of $u$ throughout $[0,T]$.
    \For {each $w \in N^{[0,T]}(u)$}
        \State Draw transmission time $C(\{u, w\})$ from $D_I$ and recovery time $R_w$ from $D_R$.
        \If {$R_w > C(\{u, w\})$ and $w \notin O$}
            \State Add $w$ to $W$.
        \EndIf
    \EndFor
    \State Remove $u$ from $W$ and add $u$ to $O$.
\EndWhile
\State Let $\mathscr{P}_v$ be the set of all feasible paths from $\mathcal{I}^{(\rho)}(0) \cap O$ to $v$ identified in the first part.
\For {each path $\mathcal{P} \in \mathscr{P}_v$}
    \State Let $\{u_1^{\mathcal{P}}, u_2^{\mathcal{P}}, \ldots, u_{|\mathcal{P}|+1}^{\mathcal{P}}\}$ be the vertices along $\mathcal{P}$, where $u_1^{\mathcal{P}} \in \mathcal{I}^{(\rho)}(0)$ and $u_{|\mathcal{P}|+1}^{\mathcal{P}} = v$. Set $I_{u_1^{\mathcal{P}}} = 0$.
    \For {$j = 1$ to $|\mathcal{P}|$}
        \State Let $e_j^{\mathcal{P}} = \{u_j^{\mathcal{P}}, u_{j+1}^{\mathcal{P}}\}$.
        \State \textbf{Compute} $s^0_{u_j^{\mathcal{P}}}$:
            \If{$I_{u_j^{\mathcal{P}}} \notin \bigcup_{\sss i=1}^{\sss N\left(e_j^{\mathcal{P}}\}\right)} \left[\teon{e_j^{\mathcal{P}}}{i}, \teoff{e_j^{\mathcal{P}}}{i}\right]$}
               \State $s^0_{u_j^{\mathcal{P}}} \gets \min_{i} \left\{\teon{e_j^{\mathcal{P}}}{i} : \teon{e_j^{\mathcal{P}}}{i} > I_{u_j^{\mathcal{P}}} \right\}$.
            \Else
                \State $s^0_{u_j^{\mathcal{P}}} \gets I_{u_j^{\mathcal{P}}}$.
            \EndIf
        \State \textbf{Compute} potential infection time:
            \[
            I_{u_{j+1}^{\mathcal{P}}} = s^0_{u_j^{\mathcal{P}}} + C\left(e_j^{\mathcal{P}}\right).
            \]
        \State \textbf{Check} transmission conditions:
            \If{$I_{u_{j+1}^{\mathcal{P}}} \leq I_{u_j^{\mathcal{P}}} + R_{u_j^{\mathcal{P}}}$ \textbf{and}\\
            \qquad\qquad$I_{u_{j+1}^{\mathcal{P}}} \in \bigcup_{\sss i=1}^{\sss N\left(e_j^{\mathcal{P}}\right)} \left[\teon{e_j^{\mathcal{P}}}{i},\teoff{e_j^{\mathcal{P}}}{i}\right]$}
                \State Continue the loop.
            \Else
                \State \textbf{Break} out of the loop for this path and set $I_v^{\mathcal{P}} = \infty$.
            \EndIf
    \EndFor
    \State Record $I_v^{\mathcal{P}} = I_{u_{|\mathcal{P}|}^{\mathcal{P}}} + C\left(\{u_{|\mathcal{P}|}^{\mathcal{P}}, v \}\right)$.
\EndFor
\State \textbf{Compute} the infection time of $v$:
    \[
    I_v = \min_{\mathcal{P} \in \mathscr{P}_v} I_v^{\mathcal{P}}.
    \]
\Return $I_v$
\end{algorithmic}
\end{algorithm}

\subsection{Epidemic marks given by the backward process} \label{sec_marks_epid}
Let $\left(G_n^s\right)_{s\in[0,T]}$ be a dynamic graph and $\markedunion$ the corresponding time-marked union graph as in Definition \ref{def_time_marked_union_graph}.

To formally represent initial statuses of the vertices, individual recovery times of each vertex and transmission times on each edge recorded during the backward process, we again use marks and set them up on time-marked union graphs, for which the backward process was defined. To distinguish between the space of time marks of a dynamic graph that we have defined in Definition \ref{def_time_marks_dyn_graphs}, we simply denote the space of epidemic marks identified during the backward process by $\Lambda$ throughout this paper. For the SIR epidemic that we are investigating in this paper, this mark space is expressed by $\Lambda = [0,\infty)\times\{S,I\}$. The first coordinate records the transmission times for edges and the recovery times for vertices. The second coordinate is relevant for vertices only and it records their initial states.

We represent the probability distribution associated with the set of epidemic marks $\mathcal{M}_{\Lambda}\left(\markedunion\right)$ as \makebox[\textwidth][s]{$\mathbf{P}_{\Lambda}\left(\cdot\mid \markedunion \right)$ or, alternatively, to accentuate the dynamic graph that $\markedunion$ is derived from, by}\\
$\mathbf{P}_{\Lambda}\left(\cdot\mid \left(G_n^s \right)_{s\in[0,T]}\right)$. Respectively, we represent the probability distribution associated with the set of epidemic marks $\mathcal{M}_{\Lambda}\left(\left(\markedunion,o_n\right)\right)$ on the rooted $\left(\markedunion,o_n\right)$ as $\mathbf{P}_{\Lambda}\left(\cdot\mid \left(\markedunion,o_n\right) \right)$. Subsequently, we denote the probability measure related to the epidemic dynamic on the limiting rooted time-marked union graph $\marklim$ as $\mu_{\Lambda}$. To derive it, we first select $\marklim\sim\mu$ and then assign marks applying $\mathbf{P}_{\Lambda}(\cdot\mid \marklim)$.

Applying $\mathbf{P}_{\Lambda}(\cdot\mid \left(\markedunion,o_n\right))$, the first coordinate draws transmission times for every available edge from the distribution $D_I$ and recovery times for each vertex from the distribution $D_R$. Subsequently, the second coordinate assigns status $I$ to each vertex independently with probability $\rho$ and status $S$ otherwise. It happens independently of the first coordinate. Based on all this information, the final epidemic statuses can be identified by the second part of the backward process per each possible infection path (see Algorithm 1).

We will also consistently represent the probability distribution and expectation associated with the randomness of the dynamic graph by $\mathbf{P}_n$ and $\mathbf{E}_n$, respectively. Further, the probability distribution and expectation used without subscripts, i.e., $\mathbf{P}$ and $\mathbf{E}$, will incorporate both the randomness of the epidemic process and of the dynamic graph.

\begin{definition}[Metric on the epidemic marks]
Here, we specify the metric on the marks assigned by the backwards process to time-marked union graphs. We first specify the metric on just epidemic marks, denoted by $\text{d}_{\Lambda}$, and then a metric on the joint marks space $\Lambda\times\Xi$, denoted by $\text{d}_{\Lambda\times\Xi}$.\\
Let $\left(G^s\right)_{s\in[0,T]}$ and $\left(\Tilde{G}^s\right)_{s\in[0,T]}$ be two distinct dynamic graphs. Let $\markedunion$ and $\markeduniontilde$ denote the time-marked union graphs corresponding to $\left(G_s\right)_{s\in[0,T]}$ and $\left(\Tilde{G}_s\right)_{s\in[0,T]}$, respectively. For $i\in\mathcal{V}(G), \tilde{i}\in\mathcal{V}(\tilde{G})$, we define
\begin{align*}
    \text{d}_{\Lambda}\left(\mathcal{M}_{\Lambda}(i),\tilde{\mathcal{M}}_{\Lambda}(\tilde{i}) \right) = \max\left\{|R_i-R_{\tilde{i}}|, \text{d}_0\left(\mathcal{M}_{\Lambda}(i),\tilde{\mathcal{M}}_{\Lambda}(\tilde{i}) \right) \right\},
\end{align*}
where $R_i, R_{\tilde{i}}$ denote the recovery times of $i$ and $\tilde{i}$, respectively, $|\cdot|$ denotes the Euclidean metric on $[0,\infty)$ and $\text{d}_0\left(\mathcal{M}_{\Lambda}(i),\tilde{\mathcal{M}}_{\Lambda}(\tilde{i}) \right)$ is given by
\begin{align*}
    &\text{d}_0\left(\mathcal{M}_{\Lambda}(i),\tilde{\mathcal{M}}_{\Lambda}(\tilde{i}) \right) = 0 \hspace{0.75cm}\text{if}\hspace{0.25cm} \mathcal{M}_{\Lambda}(i)=\tilde{\mathcal{M}}_{\Lambda}(\tilde{i}) = I \hspace{0.25cm}\text{or}\hspace{0.25cm} \mathcal{M}_{\Lambda}(i)=\tilde{\mathcal{M}}_{\Lambda}(\tilde{i}) = S,\\
    &\text{d}_0\left(\mathcal{M}_{\Lambda}(i),\tilde{\mathcal{M}}_{\Lambda}(\tilde{i}) \right) = 1 \hspace{0.75cm}\text{otherwise}.
\end{align*}
Further, for $e\in\mathcal{E}(G), \tilde{e}\in\mathcal{E}(\tilde{G})$, we define
\begin{align*}
    \text{d}_{\Lambda}\left(\mathcal{M}_{\Lambda}(e),\tilde{\mathcal{M}}_{\Lambda}(\tilde{e}) \right) = |C(e)-\tilde{C}(\tilde{e})|,
\end{align*}
where $C(e),\tilde{C}(\tilde{e})$ are transmission times on the edges $e$ and $\tilde{e}$, respectively, and $|\cdot|$ denotes the Euclidean metric on $[0,\infty)$.\\
Having defined $\text{d}_{\Lambda}$, we proceed to define $\text{d}_{\Lambda\times\Xi}$. Note that $\Xi$ only assigns marks to edges. Hence,
\begin{align*}
    \text{d}_{\Lambda\times\Xi}\left(\mathcal{M}_{\Lambda\times\Xi}(i),\tilde{\mathcal{M}}_{\Lambda\times\Xi}(\tilde{i}) \right) = \text{d}_{\Lambda}\left(\mathcal{M}_{\Lambda}(i),\tilde{\mathcal{M}}_{\Lambda}(\tilde{i}) \right).
\end{align*}
For $e\in\mathcal{E}(G), \tilde{e}\in\mathcal{E}(\tilde{G})$, we define
\begin{align*}
    \text{d}_{\Lambda\times\Xi}\left(\mathcal{M}_{\Lambda\times\Xi}(e),\tilde{\mathcal{M}}_{\Lambda\times\Xi}(\tilde{e}) \right) = \max\left\{ \text{d}_{\Lambda}\left(\mathcal{M}_{\Lambda}(e),\tilde{\mathcal{M}}_{\Lambda}(\tilde{e}) \right), \text{d}_{\Xi}\left(\mathcal{M}(e),\tilde{\mathcal{M}}(\tilde{e}) \right)\right\},
\end{align*}
with $\text{d}_{\Xi}\left(\mathcal{M}(e),\tilde{\mathcal{M}}(\tilde{e})\right)$ as in Definition \ref{def_metric_marks_dyn_graphs}.
\end{definition}

\section{Proof of Theorem \ref{thm_dyn_conv_epid}} \label{sec_proofs_overview}
The proof of Theorem \ref{thm_dyn_conv_epid} requires two steps. We first show that an epidemic ran within an $r$-ball of every vertex concentrates around the real-time evolution of an epidemic on the entire graph. Next, we show that an analogous approximation is also possible on the limiting graph. This section consists of a more detailed description of these steps together with their proofs.

\subsection{Auxiliary results}
We start by introducing the notation that will frequently appear throughout the rest of the paper. Let $\markedunion$ denote the time-marked union graph determined by the dynamic random graph
$\left(G_n^s\right)_{s\in[0,T]}$, with time-marks specified by the ON and OFF times of its edges. Denote the infection time of the root $o_n$ in the prelimit $(\markedunion,o_n,\mathcal{M}_{\Lambda}(\markedunion))$, assuming that the network is confined to $B_r\markedunionroot$, by $T_n^{(r)}(o_n)$. It refers to the length of the shortest path from the set of initially infected vertices in
$B_r\markedunionroot$ to the root $o_n$. Similarly, denote the infection time of the root $o$ in the limit $(\markedg,o,\mathcal{M}_{\Lambda}(\markedg))$, assuming that the network is confined to $B_r\marklim$, by $T^{(r)}(o)$, which corresponds to the length of the shortest path from the set of initially infected vertices in $B_r\marklim$ to the root $o$.\\

\noindent We aim at approximating the proportion $\susn$ of susceptible vertices at time $t$, for $t\in[0,T]$, with a collection of functions restricted to $r$-neighbourhoods, i.e.,
\begin{align} \label{susnr_formula}
    \susnr = \frac{1}{n} \sum_{v\in [n]} \mathds{1}_{\big\{t<T_n^{(r)}(v)\big\}}.
\end{align}
We can define $\infnr$ and $\recnr$ analogously to $\susnr$, and then also the entire vector $\epidnr$, with $\epidnr = (\susnr,\infnr,\recnr)$.  We start by bounding the first moment of the local approximation $\epidnr$ of $\epidn$, which we formalise in the following proposition:

\begin{proposition}[Local approximation - first moment] \label{lem_1stmom}
For a given graph $(G_n^s)_{s\in[0,T]}$ and for any $t\in[0,T]$, $r\geq1$, $\rho\in(0,1)$,
\begin{align}
    \mathbf{E}\left[\sup_{t\in[0,T]}|\epidn - \epidnr| \right] \leq (1-\rho)^r,
\end{align}
where the expectation is with respect to the epidemic (i.e., the transmission and recovery rates and the random set of initially infected vertices) and the randomness of the dynamic graph, and the norm used on $\mathbf{R}^3$ is the maximum norm.
\end{proposition}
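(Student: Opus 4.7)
The plan is to reduce the supremum over $t \in [0,T]$ to a per-vertex indicator, and then to bound the expectation of that indicator by $(1-\rho)^r$ via a backward-chain exposure argument adapted from the static case in \cite[Section~3]{Alimohammadi2024}.

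For the reduction, I would use the monotonicity $T_n(v) \le T_n^{(r)}(v)$: restricting the network to $B_r(v)$ can only delay $v$'s infection. This gives $\susn \le \susnr$, $\recn \ge \recnr$, and hence $|\infn - \infnr| \le \max(\susnr - \susn,\, \recn - \recnr)$. Each of these three differences is a non-negative sum of the form $\frac{1}{n}\sum_v \mathds{1}_{\{T_n(v) \le t < T_n^{(r)}(v)\}}$ (or its shift by $R_v$ for the recovered component), whose supremum over $t$ per vertex equals $\mathds{1}_{\{T_n(v) < T_n^{(r)}(v)\}}$. Interchanging $\sup_t$ with the non-negative sum over $v$ and taking expectations,
\begin{align*}
\mathbf{E}\Bigl[\sup_{t \in [0,T]} |\epidn - \epidnr|\Bigr] \;\le\; \mathbf{P}\bigl(T_n(o_n) \ne T_n^{(r)}(o_n)\bigr),
\end{align*}
where $o_n$ is a uniformly chosen vertex.

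For the bound on the indicator, I would use the backward infection chain $v = v_0, v_1, \dots, v_k$ from Section \ref{sec_backward_process}, where each $v_{i+1}$ is the direct infector of $v_i$ and $v_k \in \mathcal{I}^{(\rho)}(0)$. Since $v_{i+1}$ is a union-graph neighbour of $v_i$, we have $v_i \in B_i(v) \subseteq B_r(v)$ for all $i \le r$. If $k \le r$, the whole chain and its time-marks are fully available inside $B_r(v)$, so the restricted process also realises this chain, forcing $T_n^{(r)}(v) \le T_n(v)$ and hence equality. Therefore $\{T_n(v) \ne T_n^{(r)}(v)\} \subseteq \{k > r\}$, and on the latter event $v_0, v_1, \dots, v_r$ are $r+1$ distinct vertices of $B_r(v)$, none initially infected. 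Conditioning on the dynamic graph and on all transmission and recovery times, a sequential exposure of the initial infection statuses along the levels of the potential backward in-tree rooted at $v$ (built from the single-source infection paths $\pi(w,v)$, which are determined by the dynamics alone and are independent of $\mathcal{I}^{(\rho)}(0)$) then makes each of the $r+1$ chain vertices contribute an independent factor $(1-\rho)$ of being susceptible, yielding $\mathbf{P}(k > r) \le (1-\rho)^{r+1} \le (1-\rho)^r$.

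The main obstacle is the final sequential exposure in the dynamic, recovery-aware SIR model. The direct infector of each chain vertex depends globally on the epidemic dynamics, so one must carefully verify that the potential backward in-tree is well-defined, that the actual chain follows its first $r+1$ levels starting from $v$, and that the initial infection statuses along this prefix of the chain are independent of the tree structure. This parallels the argument of \cite[Section~3]{Alimohammadi2024} in the static case, but the dynamic time-marks on the edges of the union graph, together with the possibility of transmission failure due to recovery, require extra care to ensure the independence used in the exposure step.
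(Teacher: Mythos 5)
Your proposal follows essentially the same route as the paper's proof: both reduce $\sup_{t\in[0,T]}|\epidn-\epidnr|$ to the per-vertex event $\{T_n^{(r)}(v)\neq T_n(v)\}$, and then, conditioning on the dynamic graph and on all transmission and recovery times, observe that this event forces the realised infection path to contain $r$ vertices of $B_r(v)$ none of which lies in $\mathcal{I}^{(\rho)}(0)$, yielding the factor $(1-\rho)^r$ from the independent Bernoulli$(\rho)$ initial statuses. The ``main obstacle'' you flag at the end (justifying the exposure of initial infection statuses when the infecting path itself depends on $\mathcal{I}^{(\rho)}(0)$) is treated in the paper at the same level of detail as in your sketch, so there is no gap relative to the paper's own argument.
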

The proof of Proposition \ref{lem_1stmom} is very similar to a related result for static graphs (see \textup{\cite[Lemma 4.1]{Alimohammadi2024}}). It relies on the fact that if we start an epidemic with a positive proportion of infected vertices, then these initially infected vertices will be locally reachable. As a result, an infection path of the root, i.e., the shortest path from one of the initially infected vertices to the root in terms of edge weight identified by the backward process, will contain only a limited number of vertices. The details are given in Section \ref{sec_proof_1mom}.

We now evaluate the second moment of the local approximation, on which we have the following bound:

\begin{proposition}[Local approximation dynamic random graphs - second moment]\label{lem_2ndmom_random}
Let $\big(G_n^s\big)_{s\in[0,T]}$ be a sequence of dynamic random graphs such that its corresponding time-marked union graph $\markedunion$ as in Definition \ref{def_time_marked_union_graph} converges locally in probability. Then, for any given $\delta>0$ and large enough $n$,
\begin{align}
    \sup_{t\in[0,T]}\Var(\susnr) \leq\delta,
\end{align}
where the variance is over the randomness of the epidemic process and the dynamic random graph $\big(G_n^s\big)_{s\in[0,T]}$.
\end{proposition}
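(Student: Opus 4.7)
The natural route is to apply the law of total covariance, conditioning on the dynamic graph $(G_n^s)_{s\in[0,T]}$. Writing $X_v = \mathds{1}\{t<T_n^{(r)}(v)\}$, so that $\susnr = \frac{1}{n}\sum_v X_v$, and $f_v = \mathbf{P}_{\Lambda}\!\left(X_v=1\mid(G_n^s)_{s\in[0,T]}\right)$, we obtain the decomposition
\begin{align*}
\Var\!\left(\susnr\right) \;=\; \frac{1}{n^2}\sum_{u,v\in[n]}\mathbf{E}_n\!\left[\Cov_{\Lambda}(X_u,X_v\mid(G_n^s)_{s\in[0,T]})\right] \;+\; \Var_n\!\left(\frac{1}{n}\sum_{v\in[n]} f_v\right),
\end{align*}
and I would bound the two summands separately.

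For the first term, the crucial observation is that, conditionally on the dynamic graph, $X_v$ is a deterministic function of the epidemic marks (initial statuses, recovery times, transmission times) attached to vertices and edges inside the rooted time-marked $r$-neighbourhood $B_r\markedunionv$, and these marks are drawn independently across distinct vertices and edges. Hence $X_u$ and $X_v$ are conditionally independent as soon as their $r$-balls in the union graph are vertex-disjoint. The first term is therefore dominated by $\frac{1}{n^2}\sum_{u\in[n]}|B_{2r}(u)|$ (where $|B_{2r}(u)|$ counts vertices within union-graph distance $2r$ of $u$), whose expectation equals $\frac{1}{n}\mathbf{E}_n\bigl[|B_{2r}\markedunionroot|\bigr]$. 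Local time-marked union convergence, together with the moment bounds on the typical neighbourhood volume used elsewhere in the paper, makes this $O(1/n)$.

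For the second term, $f_v$ depends on the dynamic graph only through $B_r\markedunionv$, so $f_v = h_{r,t}\bigl(B_r\markedunionv\bigr)$ for a deterministic $h_{r,t}\colon\mathcal{G}_\star(\Xi)\to[0,1]$. Since $D_I$ and $D_R$ are continuous, perturbing the $\mathrm{ON}/\mathrm{OFF}$ endpoints by at most $\varepsilon$ shifts the admissible activation windows by $\varepsilon$ and changes the probability of $\{t<T_n^{(r)}(v)\}$ by $O(\varepsilon\cdot|B_r|)$, so $h_{r,t}$ is bounded and continuous in the metric of Definition~\ref{metric_marked_graphs}. Local time-marked union convergence in probability (Definition~\ref{def_mark_union_conv_probab}) then yields
\begin{align*}
\frac{1}{n}\sum_{v\in[n]} f_v \;\stackrel{\mathbf{P}}{\longrightarrow}\; \mathbf{E}_{\mu}\!\left[h_{r,t}\bigl(B_r\marklim\bigr)\right],
\end{align*}
which upgrades to $L^2$ convergence by bounded convergence, forcing $\Var_n\!\bigl(\frac{1}{n}\sum_v f_v\bigr)\to 0$.

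The supremum over $t\in[0,T]$ in the statement is handled by a Pólya-type argument: $\susnr$ is monotone non-increasing in $t$ and valued in $[0,1]$, and the candidate limit $s^{(r)}(t)=\mu\bigl(t<T^{(r)}(o)\bigr)$ is continuous in $t$ because $D_R$ is atomless, so pointwise convergence in probability at a countable dense set of times upgrades to uniform convergence on $[0,T]$. The main obstacle I foresee is the continuity of $h_{r,t}$ in the time-marked graph metric: morally obvious from the atomlessness of $D_I$ and $D_R$, but the backward process of Algorithm~\ref{alg1} performs nested window-membership tests along every path in $\mathscr{P}_v$, so making the modulus-of-continuity argument quantitative (and uniform in $t$) is the one step that requires real work. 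Once that is in place, the rest of the proof is a direct combination of conditional independence of epidemic marks on disjoint balls with local time-marked union convergence in probability.
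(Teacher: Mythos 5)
Your decomposition via the law of total variance and your treatment of the first term (conditional independence of the epidemic marks on disjoint $2r$-balls in the union graph) coincide with the paper's Step 1 and its Lemma \ref{lem_2ndmom_union}. For the second term, however, you take a genuinely different and considerably shorter route: you observe that $\mathbf{E}_{\Lambda}[\susnr\mid (G_n^s)_{s\in[0,T]}]=\frac1n\sum_v h_{r,t}\bigl(B_r\markedunionv\bigr)$ is an empirical average of a bounded continuous functional of the rooted time-marked union ball, so local time-marked union convergence in probability sends it to a constant, and boundedness upgrades this to $L^2$, killing the variance. The paper instead reaches the same conclusion through a long Step 2: discretizing time at the change points of the neighbourhood, partitioning over all subgraph trajectories, truncating both the number of changes $N_r$ and the ball sizes, and finally invoking concentration of the empirical trajectory counts $P_{\sss r,\bar{s}_{\sss k}}^{\sss (G_n^{\sss T})}$ around their means. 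Your argument is essentially the mechanism the paper uses to prove Proposition \ref{lem_conv_loc_approx} (via Lemma \ref{lem_fct_cont_bounded}, which supplies exactly the continuity of $h_{r,t}$ that you flag as the step needing real work); since that proposition is proved independently of the present one, there is no circularity, and your route buys a substantially cleaner proof at the price of leaning on the functional characterization of local convergence in probability rather than only on the ball-indicator form of Definition \ref{def_mark_union_conv_probab}.

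One step in your first-term bound does not go through as stated: you claim $\frac1n\mathbf{E}_n\bigl[|B_{2r}\markedunionroot|\bigr]=O(1/n)$, but local convergence only yields \emph{tightness} of $|B_{2r}(G_n^{[0,T]},o_n)|$, not boundedness (let alone convergence) of its expectation; there are no ``moment bounds on the typical neighbourhood volume'' in the paper to invoke. The standard repair, which is what the paper does, is the truncation $\varepsilon_{2r}(G_n^{[0,T]})\leq\varepsilon_{2r}(G_n^{[0,T]},l)+l/n$, bounding the contribution of vertices with balls larger than $l$ by the (tight, hence small) proportion of such vertices and the rest by $l/n$. This yields only ``$\leq\bar\delta$ for $n$ large'', not $O(1/n)$, but that is all the proposition requires. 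A further small point: the limit $s_r(t)$ has a jump of size $\rho$ at $t=0$ (initially infected vertices have infection time $0$), so your P\'olya argument for the supremum over $t$ should exclude $t=0$ and treat it separately (where the variance is $\rho(1-\rho)/n$); note also that in the paper the uniformity in $t$ comes for free because none of its bounds depend on $t$.
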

The proof of Proposition \ref{lem_2ndmom_random} begins by applying the law of total variance to decompose the second moment into two components: the expectation of the conditional variance and the variance of the conditional expectation, where we condition on the dynamic random graph. To simplify the proof, the bound on the second moment, conditioned on the dynamic graph, is established in a separate lemma (see Lemma \ref{lem_2ndmom_union}). This bound leverages the fact that epidemic processes up to time $t$ for two vertices located far apart in the union graph are conditionally independent, given the graph process. Based on this, the first component is bounded by taking the expectation of the expression derived in Lemma \ref{lem_2ndmom_union} and using implications of local time-marked union convergence for neighbourhood sizes. Bounding the second component requires a more intricate argument. This begins by conditioning on all possible trajectories of the time-marked union graph associated with $\big(G_n^s\big)_{s\in[0,T]}$, followed by using consequences of local time-marked union convergence in probability. The full details of this argument are provided in Section \ref{sec_proof_2mom_random}.\\

We now proceed with the limiting time-marked union graph $\marklim$. Similarly as for the original graph, we would like to compare the actual epidemic to its local approximation. Hence, we define the proportion of susceptible vertices at time $t\in[0,T]$ in the limiting graph restrained to radius $l$ with respect to the infection time of the root as
\begin{align}
    s_l(t) = \mu_{\Lambda}\big(\mathds{1}_{\{t<T^{(l)}(o)\}}\big).
\end{align}
Analogously, we can define
\begin{align}
    r_l(t) = \mu_{\Lambda}\big(\mathds{1}_{\{t>T^{(l)}(o)+R_o\}}\big),
\end{align}
and then $i_l(t) = 1 - s_l(t) - r_l(t)$. Using once again the argument about local reachability of initially infected vertices, we can bound the difference between $s_l(t),s_{l'}(t)$ for any integers $l$ and $l'$, uniformly for all $t\in[0,T]$. Subsequently, using monotone convergence, we can also show that $s_l(t)$ (and analogously $i_l(t),r_l(t)$) is close to $s(t) = \lim_{l\to\infty}s_l(t)$:

\begin{proposition}[Local approximation of the limit]\label{lem_on_the_lim} For any measure $\mu$ on $\mathcal{G}_{\star}$, and any integers $l$ and $l'$, 
\begin{align} \label{on_the_lim_ineq}
\mathbf{E}\left[\mu_{\Lambda}\bigg(\sup_{t\in[0,T]}\big|\mathds{1}_{\left\{t<T^{(l)}(o)\right\}}-\mathds{1}_{\left\{t<T^{(l')}(o)\right\}}\big|\bigg)\right] \leq (1-\rho)^{\min\{l,l'\}}.
\end{align}
Thus, $s(t) = \lim_{l\to\infty}s_l(t)$, $i(t) = \lim_{l\to\infty} i_l(t)$ and $r(t) = \lim_{l\to\infty} r_l(t)$ are well-defined, and
\begin{align}
    \sup_{t\in[0,T]} |(s_l(t), i_l(t), r_l(t)) - (s(t), i(t), r(t))| \leq (1 - \rho)^l.
\end{align}
\end{proposition}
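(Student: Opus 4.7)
The plan is to reduce the claim to bounding the probability of a single event and then apply an exposure argument in a graph-measurable order. First I would assume without loss of generality that $l \leq l'$ and exploit the monotonicity $T^{(l)}(o) \geq T^{(l')}(o)$ that follows from the inclusion $B_l(G,o) \subseteq B_{l'}(G,o)$: fewer admissible paths can only delay the infection. The two indicators are then monotonically ordered, so $\sup_{t\in[0,T]} \big|\mathds{1}_{\{t<T^{(l)}(o)\}} - \mathds{1}_{\{t<T^{(l')}(o)\}}\big|$ collapses to $\mathds{1}_{\{T^{(l)}(o)>T^{(l')}(o)\}}$, and it suffices to show $\mu_\Lambda(T^{(l)}(o)>T^{(l')}(o)) \leq (1-\rho)^l$.

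The second step is a structural observation about the optimal backward-process path. Condition on the rooted time-marked union graph sampled from $\mu$ and on all transmission times $C(e)$, and let $P^{\ast} = V^{\ast} = w_k, w_{k-1}, \ldots, w_1, o$ be an optimal path for the backward process from the initially infected vertex $V^{\ast} \in B_{l'}$ realizing $T^{(l')}(o)$. If $k \leq l$, each $w_j$ lies at graph distance at most $j$ from $o$ and hence in $B_l$, so the entire path sits inside $B_l$ and witnesses $T^{(l)}(o) \leq T^{(l')}(o)$, contradicting the event; hence $k \geq l+1$. A direct induction down the path, using the dynamic transmission rule of Section \ref{SIR_setup} (advancing an intermediate vertex's infection clock to time $0$ can only advance the next ON-initialization of each subsequent edge), shows that the infection time along the subpath $w_j \to \cdots \to o$ starting at time $0$ is at most $T^{(l')}(o)$. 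Thus if any $w_j$ with $j \leq l$ were initially infected, that subpath would witness $T^{(l)}(o) \leq T^{(l')}(o)$, again a contradiction.

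For the probabilistic bound, I would order the vertices of $B_{l'}$ by their potential infection times. Define $\tau_v$ as the backward-process infection time of $o$ in $B_{l'}$ when $v$ alone is treated as initially infected at time $0$; this is a deterministic functional of the graph, its time marks, and the transmission times. Enumerate the vertices of $B_{l'}$ in a total order that first sorts by increasing $\tau_v$ and then breaks ties lexicographically using the minimum edge-count among paths attaining $\tau_v$—a secondary criterion needed because in the dynamic setting waiting for ON-periods can produce equal $\tau$'s. Call this ordering $o = u_0, u_1, u_2, \ldots$; it is measurable with respect to the graph, marks, and transmission times, and thus independent of the i.i.d.\ $\mathrm{Bern}(\rho)$ initial infection vector $(X_v)_v$. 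Let $J = \inf\{j \geq 1 : X_{u_j}=1\}$, so that $V^{\ast} = u_J$ on $\{X_o=0\}$. By the structural step, each $w_j$ with $j \leq l$ strictly precedes $V^{\ast}$ in this ordering (either $\tau_{w_j} < \tau_{V^{\ast}}$ outright, or $\tau_{w_j}=\tau_{V^{\ast}}$ but $w_j$'s optimal path has at most $j \leq l < k$ edges). Since the $w_j$ are $l$ distinct vertices, $J \geq l+1$, and a direct computation gives $\mathbf{P}(J \geq l+1) = (1-\rho)^l$. Averaging over $\mu$ yields $\mathbf{E}[\mu_\Lambda(T^{(l)}(o)>T^{(l')}(o))] \leq (1-\rho)^l$.

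The convergence statements then follow quickly. Monotonicity of $T^{(l)}(o)$ in $l$ makes $s_l$ and $r_l$ monotone and bounded, so by monotone convergence under $\mu_\Lambda$ they converge pointwise to $s(t) = \mu_\Lambda(t < T(o))$ and the analogous $r$, where $T(o) := \lim_l T^{(l)}(o)$; then $i_l \to i = 1 - s - r$. For the uniform bound, interchanging $\sup_t$ with $\mu_\Lambda$ (the indicator differences are nonnegative and supported on a single interval in $t$) gives $\sup_t (s_l(t) - s_{l'}(t)) \leq \mu_\Lambda(T^{(l)}(o)>T^{(l')}(o)) \leq (1-\rho)^l$, and letting $l' \to \infty$ produces the claimed uniform rate, with the same argument applying to $r$ and $i$. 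I expect the main obstacle to be the probabilistic step: unlike in the static case, $\tau_v$ is not a sum of fixed edge weights, so one has to verify that shifting an intermediate vertex's infection to time $0$ is monotone under the dynamic ON/OFF transmission rule and choose the ordering carefully so that tied $\tau$'s do not spoil the argument that the $l$ ancestors precede $V^{\ast}$.
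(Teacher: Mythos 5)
Your proposal takes essentially the same route as the paper: the paper's proof consists of invoking the argument of Proposition \ref{lem_1stmom} --- on the event that the two infection times differ, the optimal backward-process path must originate outside $B_{\min\{l,l'\}}$ and therefore traverse $\min\{l,l'\}$ vertices of that ball, none of which may be initially infected, whence the bound $(1-\rho)^{\min\{l,l'\}}$ by independence of the Bernoulli$(\rho)$ initial states from the graph, marks and transmission/recovery times --- followed by monotonicity of $s_l$ and $r_l$ in $l$ and the identity $i_l=1-s_l-r_l$, exactly as you argue. Your $\tau_v$-ordering/exposure step (handling the fact that the identity of the optimal path depends on the initial infection vector) and your explicit caveat that the sub-path monotonicity must be verified against the ON/OFF-window and recovery conditions of the dynamic transmission rule are both points the paper's two-line proof leaves implicit, so if anything you have been more careful than the source.
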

We prove Proposition \ref{lem_on_the_lim} in Section \ref{sec_proof_loc_approx_on_the_lim}. Next, note that $t \mapsto\susnr$ for $t\in[0,T]$ is not continuous in $t$. It has at most $n$ discontinuity points corresponding to infection times of all $v\in[n]$. Hence, to build a bridge between an epidemic on $\left(G_n^s\right)_{s\in[0,T]}$ and an epidemic on its limit, we need an extra argument showing that $\susnr$ converges to $s_r(t)$ for any $t\in[0,T]$, which is the subject of the following proposition:

\begin{proposition}[Convergence of the dynamic local approximation]\label{lem_conv_loc_approx} Let $(G_n^s)_{s\in[0,T]}$ be a sequence of dynamic graphs such that its corresponding time-marked union graph, $\markedunionroot$, converges locally in probability to $\marklim\sim\mu$. Then, for any $t\in[0,T]$,
\begin{align}
    \mathbf{E}_n\left[S_{n,r}^{(\rho)}(t) \right] \stackrel{\mathbf{P}}{\longrightarrow} s_r(t),
\end{align}
where convergence in probability is with respect to the randomness of the dynamic random graph $(G_n^s)_{s\in[0,T]}$.
\end{proposition}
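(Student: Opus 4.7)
The plan is to rewrite $\mathbf{E}_n[\susnr]$ as an empirical average over vertices of a bounded, $\mu$-a.s.\ continuous functional of the rooted time-marked $r$-ball structure, and then to invoke local time-marked union convergence in probability. Since, conditionally on $(G_n^s)_{s\in[0,T]}$, the event $\{t < T_n^{(r)}(v)\}$ is determined by $B_r(\markedunionv)$ together with the epidemic marks drawn on that ball, integrating out the epidemic randomness gives
\begin{align*}
\mathbf{E}_n[\susnr] = \frac{1}{n} \sum_{v\in[n]} F_r\bigl(B_r(\markedunionv)\bigr),
\end{align*}
where $F_r: \mathcal{G}_{\star}(\Xi) \to [0,1]$ is defined by $F_r(H,u,m) := \mathbf{P}_\Lambda\bigl(t < T^{(r)}(u) \mid (H,u,m)\bigr)$.

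I would next establish that $F_r$ is continuous at $\mu$-a.e.\ point. If $(H_k,u_k,m_k) \to (H,u,m)$ in $\text{d}_{\mathcal{G}_{\star}}$, then for $k$ large enough the $r$-balls are rooted-isomorphic via a map $\phi_k$ under which the edge time marks converge (in the sense of Definition \ref{def_metric_marks_dyn_graphs}). Coupling the epidemic marks via $\phi_k$ --- copying transmission times from $D_I$, recovery times from $D_R$, and initial infection states --- each update rule of the backward process of Section \ref{sec_backward_process} depends continuously on the ON/OFF marks outside of a measure-zero set of configurations in which a transmission event coincides exactly with an edge boundary. Because $D_I$ and $D_R$ are continuous distributions, such coincidences have $\mathbf{P}_\Lambda$-probability zero, so the coupled infection times converge almost surely to $T^{(r)}(u)$, and bounded convergence yields $F_r(H_k,u_k,m_k) \to F_r(H,u,m)$.

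With $F_r$ bounded and $\mu$-a.s.\ continuous on the Polish space $\mathcal{G}_{\star}(\Xi)$, local time-marked union convergence in probability (Definition \ref{def_mark_union_conv_probab}) --- extended from indicators of metric balls to bounded continuous functionals by a standard portmanteau-type argument --- gives
\begin{align*}
\frac{1}{n} \sum_{v\in[n]} F_r\bigl(B_r(\markedunionv)\bigr) \stackrel{\mathbf{P}}{\longrightarrow} \mathbf{E}_\mu\bigl[F_r(\marklim)\bigr] = \mu_\Lambda\bigl(t < T^{(r)}(o)\bigr) = s_r(t),
\end{align*}
which is the desired conclusion.

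The principal obstacle is establishing the $\mu$-a.s.\ continuity of $F_r$: one must trace through the backward process to verify that the nested ON/OFF comparisons and time-additions along each feasible path yield a function of the edge marks that is almost surely continuous under the epidemic law, invoking the continuity of $D_I$ and $D_R$ to rule out the degenerate coincidences (for instance, a transmission time landing exactly at an ON/OFF boundary, or the infection time of the root equalling $t$). A subsidiary point is that the stated definition of local time-marked union convergence is formulated for indicators of metric balls, so the upgrade to general bounded continuous test functions should be justified --- as usual on a Polish space, by approximating $F_r$ through indicators whose boundaries are $\mu$-null.
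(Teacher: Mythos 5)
Your proposal is correct and follows essentially the same route as the paper: the paper likewise defines the functional $h_{t,r}\left(\markedg\right)= \mathbf{P}_{\Lambda}\left(t<T^{(r)}(o)\mid \markedg\right)$, proves its boundedness and continuity in Lemma \ref{lem_fct_cont_bounded} via exactly the coupling you sketch (matching epidemic marks through the rooted isomorphism and using the continuity of $D_I$ to bound the probability that a transmission time lands in the symmetric difference of the nearly-identical ON windows), and then applies the bounded-continuous-functional characterization of local time-marked union convergence in probability to conclude $\mathbf{E}_{\Lambda}\left[\susnr \mid \left(G_n^s\right)_{s\in[0,T]}\right] \stackrel{\mathbf{P}}{\longrightarrow} s_r(t)$. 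Your remark that the indicator-based Definition \ref{def_mark_union_conv_probab} needs to be upgraded to general bounded continuous test functions is a fair point of care that the paper passes over implicitly, but it does not change the substance of the argument.
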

The proof of Proposition \ref{lem_conv_loc_approx} follows in two steps: We first define a suitable functional related to the probability of infecting the root of a given input time-marked union graph and show that it is continuous and bounded (see Lemma \ref{lem_fct_cont_bounded}). Then, we rely on the fact that conditioning on a dynamic graph process is equivalent to knowing the time-marked union graph and we apply the alternative characterization of weak convergence in terms of convergence of bounded and continuous functionals to deduce the claim. Details are given in Section \ref{sec_proof_conv_loc_approx}.

With the propositions established above, we can now proceed to the proof of our main result, which will be presented in the following section.

\subsection{Proof of the convergence of the epidemic process}\label{sec_proof_main_thm}

\begin{proof}[Proof of Theorem \ref{thm_dyn_conv_epid} subject to Propositions \ref{lem_1stmom}-\ref{lem_conv_loc_approx}]
We follow an approach similar to the proof for static graphs, see \textup{\cite[Proof of Theorem 2.5]{Alimohammadi2024}}. We derive the desired result for the proportion of susceptible vertices $\susn$. The proof is similar for infectious and recovered vertices. We first use Propositions \ref{lem_1stmom} and \ref{lem_2ndmom_random} to apply the first- and second-moment method to the proportion of susceptible vertices. By the Markov inequality and Proposition \ref{lem_1stmom}, for any $\delta'>0$, any $t\in[0,T]$ and any $r$,
\begin{align}
    \mathbf{P}\big(|\susn-\susnr|>\delta'\big) \leq\frac{1}{\delta'}\mathbf{E}\big[|\susn-\susnr|\big]\leq \frac{1}{\delta'}\mathbf{E}\big[\sup_{t\in[0,T]}|\susn-\susnr|\big]\leq \frac{(1-\rho)^r}{\delta'}.
\end{align}
Moreover, by the Chebyshev inequality, for any $\delta'>0$, any $t\in[0,T]$ and any $r\geq1$,
\begin{align}
    \mathbf{P}\big(|\susnr-\mathbf{E}[\susnr]|>\delta' \big)\leq\frac{1}{(\delta')^2}\Var(\susnr).
\end{align}
Combining the two inequalities and applying the triangle inequality we obtain, again for any $\delta'>0$, any $t\in[0,T]$ and any $r$,
\begin{align}
    \mathbf{P}\bigg(|\susn-\mathbf{E}[\susnr]|>2\delta'\bigg)\leq \frac{(1-\rho)^r+\Var(\susnr)}{\delta'}.
\end{align}
Recall that by Proposition \ref{lem_2ndmom_random}, for $n$ large enough, we can bound $\Var(\susnr)$ by any given $\delta>0$. Thus, for any $t\in[0,T]$, any $\Tilde{\delta}>0$ and $r$ and $n$ large enough,
\begin{align}
    \mathbf{P}\bigg(\bigg|\susn-\mathbf{E}[\susnr]\bigg|>\Tilde{\delta}\bigg) \leq \Tilde{\delta}.
\end{align}
Next, by Proposition \ref{lem_conv_loc_approx}, we know that $\mathbf{E}_n[\susnr]$ converges in probability to $s_r(t)$, for any $t\in[0,T]$, and by Proposition \ref{lem_on_the_lim} we have that $s_r(t)$ gets arbitrarily close to $s(t)$, for large $r$ and any $t\in[0,T]$. Thus, we can conclude that, for any $t\in[0,T]$ and $n$ large enough,
\begin{align}
    \susn \stackrel{\mathbf{P}}{\longrightarrow} s(t).
\end{align}
Same proof applies to $\recn$, and, since $\infn = 1 - \susn - \recn$, also to $\infn$.
\end{proof}

\section{Proofs of Propositions \ref{lem_1stmom} and \ref{lem_2ndmom_random}} \label{sec_all_proofs_prelim}
Here we provide proofs of Propositions \ref{lem_1stmom} and \ref{lem_2ndmom_random}.

\subsection{Proof of the bound on the first moment of the local approximation} \label{sec_proof_1mom}

\begin{proof}[Proof of Proposition \ref{lem_1stmom}]
We adapt the proof of the equivalent result for static graphs, see \textup{\cite[Lemmas 4.1 and 4.4]{Alimohammadi2024}}. We first prove the result conditionally on the trajectory of the dynamic graph. Note that for $\epidn$ and $\epidnr$ to differ, the infection time assigned to a significant number of vertices when inspecting their $r$-neighbourhoods only would have to be different than their infection times found when considering the entire graph. Hence,
\begin{align} \label{lem_1mom_eq_upper_bound}
    \mathbf{E}_{\Lambda}\left[\sup_{t\in[0,T]}|\epidn - \epidnr|\mid \big(G_n^s\big)_{s\in[0,T]} \right]&\leq \mathbf{E}_{\Lambda}\left[ \frac{1}{n}\big|\big\{v:T_n^{(r)}(v)\neq T_n^{\sss(\infty)}(v)\big\}\big|\mid \big(G_n^s\big)_{s\in[0,T]}\right]\\
    &= \mathbf{P}_{\Lambda}\left(T_n^{(r)}(o_n)\neq T_n^{\sss(\infty)}(o_n)\mid \big(G_n^s\big)_{s\in[0,T]}\right),\nonumber
\end{align}
where $\mathbf{E}_{\Lambda}$ is an expectation with respect to the marks of the epidemic (recall Section \ref{sec_marks_epid}). Note that given $\big(G_n^s\big)_{s\in[0,T]}$, we can determine the time-marked union graph $\markedunion$ (Definition \ref{def_time_marked_union_graph}), and the backward process can successfully investigate epidemic progression on such a time-marked union graph. Hence, we are investigating situations in which the shortest weighted path with respect to the edge weight identified by the backward process originates from an initially infected vertex situated \emph{beyond} the $r$-neighbourhood. Note that for this to happen, this path cannot contain any initially infected vertices in the $r$-neighbourhood. Otherwise, the infection time would have been correctly identified in the $r$-neighbourhood. Thus, note that if we condition on all transmission and recovery times and only keep the initial infections random, we obtain
\begin{align}
    \mathbf{P}\bigg(T_n^{(r)}(o_n)\neq T_n^{\sss(\infty)}(o_n)\mid \left(\markedunion, o_n,
    \mathcal{M}_{\Lambda'}\left(\markedunion\right)\right)\bigg)\leq (1-\rho)^r,
\end{align}
where $\mathcal{M}_{\Lambda'}(\cdot)$ contains only the marks corresponding to transmission and recovery times, not the information on the initial infections, and the probability is then taken with respect to these initial infections. Taking expectations with respect to the randomness of transmission and recovery times and using the tower property yields
\begin{align}
    \mathbf{P}_{\Lambda}&\big(T_n^{(r)}(o_n)\neq T_n^{\sss(\infty)}(o_n)\mid \markedunion\big)\\
    &= \mathbf{E}_{\Lambda'}\left[\mathbf{P}\bigg(T_n^{(r)}(o_n)\neq T_n^{\sss(\infty)}(o_n)\mid \left(\markedunion, o_n,
    \mathcal{M}_{\Lambda'}\left(\markedunion\right)\right)\bigg)\right] \nonumber\\
    &\leq \mathbf{E}_{\Lambda'}\left[(1-\rho)^r\right] = (1-\rho)^r,\nonumber
\end{align}
which provides the desired upper bound (see \eqref{lem_1mom_eq_upper_bound}) on the first moment, conditionally on the time-marked union graph and hence, conditionally on the trajectory of the dynamic graph, as the latter determines the first. Applying the tower property again, this time taking expectation with respect to the randomness of the dynamic graph, finishes the proof, since
\begin{align}
    \mathbf{E}_{\Lambda}\big[\sup_{t\in[0,T]}\big|\epidn - \epidnr\big| \big]&=\mathbf{E}_n\bigg[\mathbf{E}_{\Lambda}\big[\sup_{t\in[0,T]}|\epidn - \epidnr|\mid \big(G_n^s\big)_{s\in[0,T]}\big]\bigg]\\
    &\leq \mathbf{E}_n\left[\mathbf{P}_{\Lambda}\big(T_n^{(r)}(o_n)\neq T_n^{\sss(\infty)}(o_n)\mid \markedunion\big) \right]\nonumber\\
    &\leq \mathbf{E}_n\big[(1-\rho)^r\big] = (1-\rho)^r,\nonumber
\end{align}
where $\mathbf{E}_n$ refers to the randomness of the dynamic graph.
\end{proof}

\subsection{Proof of the bound on the second moment of the local approximation} \label{sec_proof_2mom_random}
As mentioned in Section \ref{sec_proofs_overview}, before proving Proposition \ref{lem_2ndmom_random}, we first derive a bound on the second moment, conditioned on the dynamic graph. This result is presented in the following lemma:
\begin{lemma}[Local approximation given the dynamic graph - second moment]\label{lem_2ndmom_union}
Let $\big(G_n^s\big)_{s\in[0,T]}$ be a sequence of dynamic random graphs and let $G^{[0,t]}_n$  be the corresponding union graph up to time $ t\in[0,T]$. Then,
\begin{align}
    \sup_{t\in[0,T]} \Var_{\Lambda}\big(\susnr\mid \big(G_n^s\big)_{s\in[0,T]}\big) \leq \frac{1}{n} + \varepsilon_{2r}(G^{[0,T]}_n),
\end{align}
where $\varepsilon_{r}(G^{[0,T]}_n)$ denotes the proportion of node pairs separated by distance at most $r$ in $G_n^{[0,T]}$, i.e.,
\begin{align} \label{vareps_dist}
    \varepsilon_{r}&\left(G^{[0,T]}_n\right)=\frac{1}{n^2} \left| \left\{ (u, v) \in [n] \times [n] : \mathrm{dist}_{G^{[0,T]}_n}(u,v) \leq r \right\} \right|,
\end{align}
and the variance is over the randomness of the epidemic process only. 
\end{lemma}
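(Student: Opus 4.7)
The plan is to expand the conditional variance as a double sum of covariances over pairs of vertices, and to exploit the fact that, conditional on the dynamic graph trajectory, the indicators defining $\susnr$ are independent whenever their underlying $r$-neighbourhoods in the union graph are disjoint.

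Writing $X_v := \mathds{1}_{\{t < T_n^{(r)}(v)\}}$ so that $\susnr = \frac{1}{n}\sum_{v\in[n]} X_v$ by (\ref{susnr_formula}), one has
\[
\Var_{\Lambda}\!\left(\susnr \mid \left(G_n^s\right)_{s\in[0,T]}\right) = \frac{1}{n^2} \sum_{u,v\in[n]} \Cov_{\Lambda}\!\left(X_u, X_v \mid \left(G_n^s\right)_{s\in[0,T]}\right).
\]
The key structural observation is that, conditional on $\left(G_n^s\right)_{s\in[0,T]}$, the time-marked union graph $\markedunion$ is deterministic, and the epidemic marks $\mathcal{M}_\Lambda$ drawn according to $\mathbf{P}_\Lambda\!\left(\cdot\mid \left(G_n^s\right)_{s\in[0,T]}\right)$ (recall Section \ref{sec_marks_epid}) are mutually independent across edges and vertices: the transmission times $C(e)\sim D_I$, the recovery times $R_w\sim D_R$, and the $\mathrm{Ber}(\rho)$ initial statuses. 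By construction of the backward process described in Section \ref{sec_backward_process}, when it is confined to $B_r(\markedunion, v)$ the variable $X_v$ is a deterministic function only of those marks attached to vertices and edges of $B_r(\markedunion, v)$.

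If $\mathrm{dist}_{G_n^{[0,T]}}(u,v) > 2r$, the triangle inequality forces $B_r(\markedunion, v)$ and $B_r(\markedunion, u)$ to be vertex- and edge-disjoint. Hence $X_u$ and $X_v$ are measurable with respect to disjoint blocks of the independent mark family, and so their conditional covariance vanishes. Consequently the surviving terms in the double sum satisfy $\mathrm{dist}_{G_n^{[0,T]}}(u,v) \leq 2r$. Splitting off the $n$ diagonal contributions $u=v$ and using the crude bound $|\Cov_{\Lambda}(X_u,X_v)| \leq 1$ on the remaining terms yields
\[
\Var_{\Lambda}\!\left(\susnr \mid \left(G_n^s\right)_{s\in[0,T]}\right) \leq \frac{n}{n^2} + \frac{1}{n^2}\left|\left\{(u,v):\, u\neq v,\, \mathrm{dist}_{G_n^{[0,T]}}(u,v) \leq 2r \right\}\right| \leq \frac{1}{n} + \varepsilon_{2r}\!\left(G_n^{[0,T]}\right).
\]
This bound is uniform in $t\in[0,T]$, so one may pass to the supremum. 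The only step requiring genuine attention is the factorization claim for $X_v$, namely that the backward algorithm restricted to $B_r(\markedunion, v)$ queries only the marks inside this ball; this is immediate from inspection of the process described in Section \ref{sec_backward_process}, but conceptually it is where the entire argument hinges — the rest is a standard covariance-sum decomposition.
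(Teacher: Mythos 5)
Your proposal is correct and follows essentially the same route as the paper's proof: both expand the conditional variance as a sum over vertex pairs, use the conditional independence of the restricted infection indicators for vertices at union-graph distance greater than $2r$ (since their $r$-balls in $G_n^{[0,T]}$ are disjoint and the epidemic marks are independent across edges and vertices given the graph), and bound the surviving diagonal and near-pair terms by $1$ to obtain $\frac{1}{n}+\varepsilon_{2r}\big(G_n^{[0,T]}\big)$ uniformly in $t$. The factorization point you flag — that $T_n^{(r)}(v)$ depends only on marks inside $B_r\big(\markedunion,v\big)$ — is indeed the crux, and the paper treats it the same way.
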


\begin{proof}
We adapt the proof of the equivalent result for static graphs, see \textup{\cite[Lemma 4.2]{Alimohammadi2024}}. The proof follows from the fact that the events $\{t<T_n^{(r)}(u)\}$ and $\{t<T_n^{(r)}(v)\}$ for any $t\in[0,T]$ are independent given the realisation of the graph, if $\text{dist}_{G^{[0,T]}_n}(u,v) > 2r$. Let
\begin{align*}
    Z_v(t) = \mathds{1}\{t<T_n^{(r)}((G_n^s)_{s\in[0,T]},v,\mathcal{M}_{\Lambda}((G_n^s)_{s\in[0,T]}))\}.
\end{align*}
Note that given $((G_n^s)_{s\in[0,T]})$, we can determine the corresponding marked union graph and successfully perform the backward process. Hence, for any $t\in[0,T]$, we can write
\begin{align}
    n^2 \Var_{\Lambda}\big(\susnr\mid \big(G_n^s\big)_{s\in[0,T]}\big) &= \mathbf{E}_{\Lambda}\big[\big(\sum_{v\in[n]}Z_v(t) - \sum_{v\in[n]}\mathbf{E}_{\Lambda}[Z_v(t) ] \big)^2  \big]\\
    =&\sum_{v\in[n]} \mathbf{E}_{\Lambda}\big[\big(Z_v(t) - \mathbf{E}_{\Lambda}[Z_v(t)]\big)^2\big]\nonumber\\
    &+ \sum_{u,v\in[n]:\text{dist}_{G^{[0,T]}_n}(u,v)\leq 2r} \mathbf{E}_{\Lambda}\big[\big(Z_v(t) - \mathbf{E}_{\Lambda}[Z_v(t)]\big)\big(Z_u(s) - \mathbf{E}_{\Lambda}[Z_u(s)]\big)\big]\nonumber\\
    &+\sum_{u,v\in[n]:\text{dist}_{G^{[0,T]}_n}(u,v)>2r} \mathbf{E}_{\Lambda}\big[\big(Z_v(t) - \mathbf{E}_{\Lambda}[Z_v(t)]\big)\big(Z_u(s) - \mathbf{E}_{\Lambda}[Z_u(s)]\big)\big]\nonumber\\
    =&\sum_{v\in[n]} \mathbf{E}_{\Lambda}\big[\big(Z_v(t) - \mathbf{E}_{\Lambda}[Z_v(t)]\big)^2\big]\nonumber\\
    &+ \sum_{u,v\in[n]:\text{dist}_{G^{[0,T]}_n}(u,v)\leq 2r} \mathbf{E}_{\Lambda}\big[\big(Z_v(t) - \mathbf{E}_{\Lambda}[Z_v(t)]\big)\big(Z_u(s) - \mathbf{E}_{\Lambda}[Z_u(s)]\big)\big]\nonumber,
\end{align}
where we have made use of the mentioned independence. Note that $0\leq Z_v(t)\leq 1$, so an obvious upper bound is $|Z_v(t) - \mathbf{E}_{\Lambda}[Z_v(t)]|\leq 1$. Thus,
\begin{align}
    n^2 \Var_{\Lambda}\big(\susnr\mid \big(G_n^s\big)_{s\in[0,T]}\big) \leq n + n^2 \varepsilon_{2r}(G^{[0,T]}_n).
\end{align}
\end{proof}
Having derived the above bound, we proceed to the proof of Proposition \ref{lem_2ndmom_random}:
\begin{proof}[Proof of Proposition \ref{lem_2ndmom_random}]
We adapt the proof of the equivalent result for static graphs, see \textup{\cite[Lemma 4.4]{Alimohammadi2024}}. By the law of total variance, for any $t\in[0,T]$,
\begin{align} \label{law_total_var}
    \Var\big(\susnr\big) = \mathbf{E}_n\big[\Var_{\Lambda}\big(\susnr\mid \big(G_n^s\big)_{s\in[0,T]}\big)\big]+ \Var_n\big(\mathbf{E}_{\Lambda}\big[\susnr\mid \big(G_n^s\big)_{s\in[0,T]}\big]\big).
\end{align}
\textbf{Step 1: Bounding the expectation of the conditional variance.} We bound the first term applying Lemma \ref{lem_2ndmom_union} as
\begin{align} \label{1st_term_bound1}
    \mathbf{E}_n\big[\Var_{\Lambda}\big(\susnr\mid \big(G_n^s\big)_{s\in[0,T]}\big)\big] \leq \frac{1}{n} + \mathbf{E}_n\left[\varepsilon_{2r}(G^{[0,T]}_n)\right].
\end{align}
It remains to bound $\mathbf{E}_n\big[\varepsilon_{2r}(G^{[0,T]}_n)\big]$ which, we recall, is the proportion of node pairs separated by distance at most $2r$ in $G_n^{[0,T]}$ in \eqref{vareps_dist}. We will derive a bound on $\varepsilon_{2r}\left(G^{[0,T]}_n\right)$ by introducing $\varepsilon_r\big(G_n^{[0,T]},l\big)$ - the empirical probability that the collective union neighbourhood $B_r(G_n^{[0,T]},o_n)$ contains at least $l$ vertices, with $o_n$ chosen uniformly at random, i.e.,
\begin{align}
    \varepsilon_r\big(G_n^{[0,T]},l\big) = \frac{1}{n}\sum_{v\in [n]} \mathds{1}_{\big\{|B_r(G_n^{[0,T]},v)|\geq l\big\}}.
\end{align}
For any $l\in\mathbf{N}$,
\begin{align}
    n^2\varepsilon_{2r}(G^{[0,T]}_n) &= \sum_{v\in[n]} |B_{2r}(G^{[0,T]}_n,v)|\\
    &= \sum_{v:|B_{2r}(G^{[0,T]}_n,v)|\geq l} |B_{2r}(G^{[0,T]}_n,v)| + \sum_{v:|B_{2r}(G^{[0,T]}_n,v)|<l} |B_{2r}(G^{[0,T]}_n,v)|.\nonumber
\end{align}
Note that there are $n\varepsilon_{2r}(G^{[0,T]}_n,l)$ vertices with $|B_{2r}(G^{[0,T]}_n,v)|\geq l$. Using an obvious bound of $|B_{2r}(G^{[0,T]}_n,v)|<n$ for such vertices and the bound of $|B_{2r}(G^{[0,T]}_n,v)|<l$ for the rest, we arrive at
\begin{align}
    \varepsilon_{2r}(G^{[0,T]}_n) \leq \varepsilon_{2r}\big(G^{[0,T]}_n,l\big) + \frac{l}{n},
\end{align}
which substituted into \eqref{1st_term_bound1} yields
\begin{align}
    \mathbf{E}_n\big[\Var_{\Lambda}\big(\susnr\mid \big(G_n^s\big)_{s\in[0,T]}\big)\big] \leq \frac{1}{n} + \frac{l}{n} + \mathbf{E}_n\left[\varepsilon_{2r}(G^{[0,T]}_n,l)\right].
\end{align}
Note that local convergence of the time-marked union graph in particular implies that also the unmarked union graph $G_n^{[0,T]}$ converges locally and hence, the neighbourhoods in $G_n^{[0,T]}$ are tight (see \textup{\cite[Appendix C.3]{Alimohammadi2024}}), i.e., for all $r<\infty$ and $\delta>0$, there exists $l<\infty$ such that for all $n$ large enough,
\begin{align}
    \mathbf{P}_n\big(\varepsilon_r\big(G_n^{[0,T]},l\big) \leq \delta\big) \geq 1-\delta.
\end{align}
Hence, for any $\Tilde{\delta}>0$, $\mathbf{E}_n[\varepsilon_{2r}\big(G^{[0,T]}_n,l\big)] \leq \mathbf{E}_n[\varepsilon_{2r}\big(G^{[0,T]}_n,l\big)] \leq \Tilde{\delta}$. Thus, if $n$ is large enough, for any given $\bar{\delta}>0$,
\begin{align} \label{1st_term_bound_final}
    \mathbf{E}_n\big[\Var_{\Lambda}\big(\susnr\mid \big(G_n^s\big)_{s\in[0,T]}\big)\big] \leq \bar{\delta}.
\end{align}
\textbf{Step 2: Bounding the variance of the conditional expectation.} For the second term in \eqref{law_total_var}, note that given $\big(G_n^s\big)_{s\in[0,T]}$, we can
determine the time-marked union graph $\markedunion$. Consequently, we can then determine the full dynamic trajectory of any vertex $v\in[n]$, including its indirect connections. Note further that the backward process can correctly establish the infection time of any vertex $v\in[n]$ given this full dynamic trajectory, as the latter delivers all the necessary information about the dynamic changes that can influence the infection status of $v$. Our objective is now to condition on all possible shapes that a dynamic trajectory of a uniformly chosen vertex may take up to time $T$ and use consequences of dynamic local convergence to bound the second term in (\ref{law_total_var}). However,  as we are operating in a continuous-time framework and the time marks in $\markedunion$ are continuous, we first need to suitably discretize the timeline.\\

\noindent\textbf{Step 2a: Discretization of time.} For that purpose, let us define the change points $\tau^{(n)}_1(v), \tau^{(n)}_2(v), \dots$ as the moments when the ON/OFF states of edges recorded by the time-marked neighbourhood in the union graph of a vertex $v\in[n]$ change, i.e., either one of the currently ON edges switches OFF, or vice versa. To distinguish the marks in $\markedunion$ from the marks in its local limit $\markedg$, we denote the former as $\left(\left(\sigeon{e}{i}, \sigeoff{e}{i}\right)_{\sss i=1}^{\sss N(e)}\right)$ and the latter as $\left(\left(\teon{e}{i}, \teoff{e}{i}\right)_{\sss i=1}^{\sss N(e)}\right)$. The first change point $\tau^{(n)}_1(v)$ is the earliest time when either any edge in the initial neighbourhood of vertex $v$ (i.e., $e$ such that $\sigeon{e}{1}=0$) switches OFF, or when a new one (i.e., $e$ such that $\sigeon{e}{1}>0$) switches ON. Formally, $\tau^{(n)}_1(v)$ is defined as
\begin{align*}
    \tau^{(n)}_1(v)= \min_{e \in B_r(\markedunion, v)} \left( \left\{ \sigeoff{e}{1} : \sigeon{e}{1} = 0 \right\} \cup \left\{ \sigeon{e}{1} : \sigeon{e}{1} > 0 \right\} \right).
\end{align*}
The subsequent change points $\tau^{(n)}_2(v), \tau^{(n)}_3(v), \dots$ are defined similarly. Indeed, after the first change point $\tau^{(n)}_1(v)$, the next change point $\tau^{(n)}_2(v)$ is the earliest time when another edge switches ON or OFF after $\tau^{(n)}_1(v)$: For $i\geq2$, the $i$-th change point $\tau^{(n)}_i(v)$ is defined as
\begin{align*}
    \tau^{(n)}_i(v)= \min_{e \in B_r(\markedunion, v), j\in\{1,\ldots,N(e)\}} \big( \left\{ \sigeoff{e}{j} : \sigeoff{e}{j} > \tau^{(n)}_{i-1}(v) \right\} \cup \left\{ \sigeon{e}{j} : \sigeon{e}{j} > \tau^{(n)}_{i-1}(v) \right\} \big),
\end{align*}
and we remind the reader that $N(e)$ denotes the total number of status changes of an edge $e$. Thus, the change points $\tau^{(n)}_1(v), \tau^{(n)}_2(v), \dots$ describe the sequence of time points when the ON/OFF statuses of the edges in the neighbourhood of $v$ changes. Recall Definition \ref{def_metric_marks_dyn_graphs}. To further simplify the notation, we denote $B_r\left(\markedunion, o_n\right)$ by $B_r^{\sss G_n^{\sss T}}(o_n)$ and the conditional probability with respect to the randomness of $\markedunion$, $\mathbf{P}(\cdot\mid\markedunion)$, by $\mathbf{P}_{\sss G_n^{\sss T}}$. Note that as a direct consequence of local convergence of the time-marked union graph (see Definition \ref{def_mark_union_conv_probab}), we know that for $n$ large enough, for any fixed marked rooted graph $\left(H,o,\left(\bar{t}^{\sss\Tilde{e}}_{\sss\mathrm{ON}},\bar{t}^{\sss\Tilde{e}}_{\sss\mathrm{OFF}}\right)_{\sss\Tilde{e}}\right)$, denoted from now on by $\hmarkedroot$, and for any $r\in\mathbf{N}$,
\begin{align*}
&\mathbf{P}_{\sss G_n^{\sss T}}\bigg(\forall e \in B_r^{\sss G_n^{\sss T}}(o_n), \phi(e)\in \hmarkedroot: \text{d}_{\Xi}\left(\mathcal{M}(e),\tilde{\mathcal{M}}(\phi(e))\right)\leq \frac{1}{r+1}\bigg)\\
&\stackrel{\mathbf{P}}{\longrightarrow} \hspace{0.1cm} \mathbf{P}\bigg(\forall e \in B_r^{\sss G^{\sss T}}(o), \phi(e)\in \hmarkedroot:\text{d}_{\Xi}\left(\mathcal{M}(e),\tilde{\mathcal{M}}(\phi(e))\right)\leq \frac{1}{r+1}\bigg),\nonumber
\end{align*}
where $B_r^{\sss G^{\sss T}}(o)$ denotes the $r$-neighborhood of the root $o$ in the limiting time-marked union graph $\marklim$. This implies that for $n$ large enough, any $r\in\mathbf{N}$ and for every edge $e$ in $B_r\left(\markedunion,o_n\right)$, there exist some limiting $\left(\left(t^{\sss\Tilde{e}}_{\sss i,\text{\rm{ON}}}, t^{\sss\Tilde{e}}_{\sss i,\text{\rm{OFF}}} \right)\right)_{\sss i=1}^{\sss N(\Tilde{e})}$, given by the time-marks of the edges in $\marklim$, such that 
\begin{align} \label{marks_vs_limit_marks}
\text{d}_{\Xi}\left(\left(\left(\sigeon{e}{i}, \sigeoff{e}{i} \right)\right)_{\sss  i=1}^{\sss N(e)}, \left(\left(t^{\sss\Tilde{e}}_{\sss i,\text{\rm{ON}}}, t^{\sss\Tilde{e}}_{\sss i,\text{\rm{OFF}}} \right)\right)_{\sss i=1}^{\sss N(\Tilde{e})}\right) \leq \frac{1}{r+1}.
\end{align}
As a consequence, since $\tau^{(n)}_1(o_n), \tau^{(n)}_2(o_n),\ldots,\tau^{(n)}_{\sss N_r}(o_n)$ are defined in terms of the edge marks in $\markedunion$, there exists $\left(\tau_1,\ldots,\tau_{\sss K}\right)$ such that, as $n\to\infty$,
\begin{align}
    \left(\tau^{(n)}_1(o_n), \tau^{(n)}_2(o_n),\ldots,\tau^{(n)}_{\sss N_r}(o_n) \right) \stackrel{\text{d}}{\longrightarrow} \left(\tau_1,\ldots,\tau_{\sss K}\right),
\end{align}
where $N_r$ denotes the number of changes in the dynamic neighbourhood of $o_n$ and $\left(\tau_1,\ldots,\tau_{\sss K}\right)$ are given by
\begin{align*}
    \tau_1 = \min_{e \in B_r(G^{[0,T]}, o)} \left( \left\{ t_{\sss 1,\text{OFF}}^e : t_{\sss 1,\text{ON}}^e = 0 \right\} \cup \left\{ t_{\sss 1,\text{ON}}^e : t_{\sss 1,\text{ON}}^e > 0 \right\} \right),
\end{align*}
and for $i\geq2$,
\begin{align*}
    \tau_{i} = \min_{e \in B_r(G^{[0,T]}, o), j\in\{1,\cdots,N(e)\}} \left\{ \big(t_{\sss j,\text{OFF}}^e: t_{\sss j,\text{OFF}}^e > \tau_{i-1} \right\}\cup \left\{ t_{\sss j,\text{ON}}^e : t_{\sss j,\text{ON}}^e > \tau_{i-1} \right\} \big).
\end{align*}
As a consequence of the local time-marked union convergence, it also holds that $K$ is finite with high probability.\\

\noindent\textbf{Step 2b: Partitioning the probability space.} Having argued that we are allowed to discretize the timeline, we can express $\mathbf{E}_{\Lambda}\big[\susnr\mid \big(G_n^s\big)_{s\in[0,T]}\big]$ in terms of all possible subgraph trajectories of the time-marked union neighbourhood. Denote, for $K$ fixed and `$\leq$' acting componentwise,
\begin{align}
    \mathbf{P}&\left((\tau^{(n)}_1(o_n), \ldots, \tau^{(n)}_{\sss N_r}(o_n))\leq (s_1,\ldots,s_K)\mid N_r=K, \left(G_n^s\right)_{s\in[0,T]}\right)\nonumber\\
    &=\frac{1}{n}\sum_{v\in[n]}\mathds{1}_{\{(\tau^{(n)}_1(v),\cdots,\tau^{(n)}_{\sss K}(v))\leq (s_1,\cdots,s_K) \}} = F^{(n)}_K(\bar{s}_{\sss K}),
\end{align}
where $\bar{s}_{\sss K}$ denotes a vector of $K$ time points $(s_1,\ldots,s_K) \in[0,T]^K$. Also denote, for $\bar{s}_{\sss K}, K$ fixed and any sequence of $K$ rooted graphs $H^{\star}_1,\ldots,H^{\star}_K$,
\begin{align*}
    P_{\sss r,\bar{s}_{\sss K}}^{\sss (G_n^{\sss T})}(H^{\star}_1,\ldots,H^{\star}_K)= \frac{1}{n} \sum_{v\in [n]} \mathds{1}_{\big\{\forall i \in \{1,\ldots,K\} \hspace{0.2cm} B^{s_i}_r(\markedunion,v)\simeq H^{\star}_i\big\}},
\end{align*}
where $B^{s}_r(\markedunion,v)$ denotes the $r$-neighbourhood of $v$ in the time-marked union graph $(\markedunion,v)$ restricted to edges $e$ such that $s\in\bigcup_{\sss  i=1}^{\sss N(e)}\left[\sigeon{e}{i},\sigeoff{e}{i}\right]$. Note that the rooted graphs $H^{\star}_1,\ldots,H^{\star}_K$ are not necessarily connected. While this may deviate from the customary approach of considering only connected rooted graphs, it is necessary in this context to account for indirect connections, which are integral to the analysis. We write
\begin{align} \label{susnr_write_out}
    \mathbf{E}_{\Lambda}\big[\susnr\mid \big(G_n^s\big)_{s\in[0,T]}\big]= &\sum_{k=0}^{\infty} \mathbf{P}(N_r=k) \int_{\bar{s}_{\sss k}} \sum_{\Tilde{H}^{\star}_{[k+1]}\in\Tilde{\mathcal{H}}} P_{\sss r,\bar{s}_{\sss k}}^{\sss (G_n^{\sss T})}\left(\Tilde{H}^{\star}_{[k+1]}\right)\nonumber\\ &\times\mathbf{P}_{\Lambda}\left(t<T^{(r)}\left(\Tilde{H}^{\star}_{[k+1]} \right) \mid \Tilde{H}^{\star}_{[k+1]} \right)\text{d}F^{(n)}_k(\bar{s}_{\sss k}),
\end{align}
with $\Tilde{H}^{\star}_{[k]}$ denoting a $k$-element sequence of rooted graphs, $\Tilde{\mathcal{H}}$ a set of all sequences of rooted graphs and $T^{(r)}\left(\Tilde{H}^{\star}_{[k]} \right)$ the infection time of the root in a dynamic graph with dynamic trajectory $\Tilde{H}^{\star}_{[k]}$.\\

\noindent\textbf{Step 2c: Truncating the number of changes $N_r$.} Note further that \eqref{susnr_write_out} can be split as
\begin{align}
    \mathbf{E}_{\Lambda}\big[\susnr\mid \big(G_n^s\big)_{s\in[0,T]}\big]= &\sum_{k=0}^{K} \mathbf{P}(N_r=k) \int_{\bar{s}_{\sss k}} \sum_{\Tilde{H}^{\star}_{[k+1]}\in\Tilde{\mathcal{H}}} P_{\sss r,\bar{s}_{\sss k}}^{\sss (G_n^{\sss T})}\left(\Tilde{H}^{\star}_{[k+1]}\right)\nonumber\\
    &\qquad\times\mathbf{P}_{\Lambda}\left(t<T^{(r)}\left(\Tilde{H}^{\star}_{[k+1]} \right) \mid \Tilde{H}^{\star}_{[k+1]} \right)\text{d}F^{(n)}_k(\bar{s}_{\sss k}) \nonumber\\
    &+\sum_{k=K}^{\infty} \mathbf{P}(N_r=k) \int_{\bar{s}_{\sss k}} \sum_{\Tilde{H}^{\star}_{[k+1]}\in\Tilde{\mathcal{H}}} P_{\sss r,\bar{s}_{\sss k}}^{\sss (G_n^{\sss T})}\left(\Tilde{H}^{\star}_{[k+1]}\right)\nonumber\\ &\qquad\times\mathbf{P}_{\Lambda}\left(t<T^{(r)}\left(\Tilde{H}^{\star}_{[k+1]} \right) \mid \Tilde{H}^{\star}_{[k+1]} \right)\text{d}F^{(n)}_k(\bar{s}_{\sss k})\nonumber\\
    &\equiv X_K + Y_K. \nonumber
\end{align}
Hence,
\begin{align} \label{var_split1}
    \Var_n\left(\mathbf{E}_{\Lambda}\big[\susnr\mid \big(G_n^s\big)_{s\in[0,T]}\big]\right)= \Var_n\left(X_K\right) + \Var_n\left(Y_K\right) + 2\mathrm{Cov}_n\left(X_K,Y_K\right).
\end{align}
Note that $0\leq X_K \leq1$ and $0\leq Y_K \leq1$, since $0\leq\susnr\leq1$. Hence, $\Var_n(Y_K)\leq\mathbf{E}_n\left[\left(Y_K\right)^2\right]\leq\mathbf{E}_n[Y_K]$. Furthermore, note that as a consequence of convergence of the marked union graph, for $n$ large enough, $N_r$ is finite with high probability and hence, $\mathbf{P}(N_r\geq K)$ vanishes for $K$ sufficiently large. Finally, note that the inner expression in $Y_K$, i.e.,
\begin{align}
    \int_{\bar{s}_{\sss k}} \sum_{\Tilde{H}^{\star}_{[k+1]}\in\Tilde{\mathcal{H}}} P_{\sss r,\bar{s}_{\sss k}}^{\sss (G_n^{\sss T})}\left(\Tilde{H}^{\star}_{[k+1]}\right)\times\mathbf{P}_{\Lambda}\left(t<T^{(r)}\left(\Tilde{H}^{\star}_{[k+1]} \right) \mid \Tilde{H}^{\star}_{[k+1]} \right)\text{d}F^{(n)}_k(\bar{s}_{\sss k}),
\end{align}
is also bounded by $1$, as it equals the probability that a randomly chosen vertex becomes infected, conditionally on $N_r$. Combining all of the above arguments we obtain that, for $n$ and $K$ large enough depending on $\delta>0$,
\begin{align}
\Var_n(Y_K)\leq\mathbf{E}_n[Y_K] \leq \sum_{k=K}^{\infty} \mathbf{P}_n(N_r=k) = \mathbf{P}_n(N_r\geq K) \leq \delta,
\end{align}
for any $\delta>0$. We can bound $\mathrm{Cov}_n\left(X_K,Y_K\right)$ in a similar manner, again using the fact that $0\leq X_K \leq1$ and $0\leq Y_K \leq1$ as
\begin{align}
    \mathrm{Cov}_n\left(X_K,Y_K\right) \leq \mathbf{E}_n\left[X_K \cdot Y_K\right] \leq \mathbf{E}_n\left[Y_K\right] \leq \delta.
\end{align}
Hence, we obtain that for $n$ and $K$ large enough, for any $\delta>0$,
\begin{align} \label{var_bound1}
    \Var_n\left(\mathbf{E}_{\Lambda}\big[\susnr\mid \big(G_n^s\big)_{s\in[0,T]}\big]\right) \leq \Var_n\left(X_K\right) + 6\delta.
\end{align}
We proceed to bound $\Var_n\left(X_K\right)$.\\

\noindent\textbf{Step 2d: Truncating the graph sizes.} Note that we can decompose $X_K$ as
\begin{align}
    X_K=&\sum_{k=0}^{K} \mathbf{P}(N_r=k) \int_{\bar{s}_{\sss k}} \sum_{\Tilde{H}^{\star}_{[k+1]}\in\Tilde{\mathcal{H}}} P_{\sss r,\bar{s}_{\sss k}}^{\sss (G_n^{\sss T})}\left(\Tilde{H}^{\star}_{[k+1]}\right)\times\mathbf{P}_{\Lambda}\left(t<T^{(r)}\left(\Tilde{H}^{\star}_{[k+1]} \right) \mid \Tilde{H}^{\star}_{[k+1]} \right)\text{d}F^{(n)}_k(\bar{s}_{\sss k})\\
    = &\sum_{k=0}^{K} \mathbf{P}(N_r=k) \int_{\bar{s}_{\sss k}} \sum_{\Tilde{H}^{\star}_{[k+1]}\in\Tilde{\mathcal{H}}_l} P_{\sss r,\bar{s}_{\sss k}}^{\sss (G_n^{\sss T})}\left(\Tilde{H}^{\star}_{[k+1]}\right)\times\mathbf{P}_{\Lambda}\left(t<T^{(r)}\left(\Tilde{H}^{\star}_{[k+1]} \right) \mid \Tilde{H}^{\star}_{[k+1]} \right)\text{d}F^{(n)}_k(\bar{s}_{\sss k})\nonumber\\
    &+\sum_{k=0}^{K} \mathbf{P}(N_r=k) \int_{\bar{s}_{\sss k}} \sum_{\Tilde{H}^{\star}_{[k+1]}\in\Tilde{\mathcal{H}}_l^c} P_{\sss r,\bar{s}_{\sss k}}^{\sss (G_n^{\sss T})}\left(\Tilde{H}^{\star}_{[k+1]}\right)\times\mathbf{P}_{\Lambda}\left(t<T^{(r)}\left(\Tilde{H}^{\star}_{[k+1]} \right) \mid \Tilde{H}^{\star}_{[k+1]} \right)\text{d}F^{(n)}_k(\bar{s}_{\sss k})\nonumber\\
    \equiv&X_K^{'} + X_K^{''},\nonumber
\end{align}
where we have denoted the space of sequences of graphs of size at most $l$ by $\Tilde{\mathcal{H}}_l$ and the space of sequences of graphs with at least one graph of size bigger than $l$ by $\Tilde{\mathcal{H}}_l^c$. We have 
\begin{align} \label{var_split2}
    \Var_n\left(X_K\right) = \Var_n\left(X_K^{'}\right) + \Var_n\left(X_K^{''}\right) + 2\mathrm{Cov}_n\left(X_K^{'},X_K^{''}\right).
\end{align}
We can apply the same reasoning as in the previous step. Note that, since $0\leq X_K\leq1$, $0\leq X_K^{'} \leq1$ and $0\leq X_K^{''} \leq1$. Hence, $\Var_n(X_K^{''})\leq\mathbf{E}_n\left[\left(X_K^{''}\right)^2\right]\leq\mathbf{E}_n[X_K^{''}]$. Furthermore, $\mathbf{E}_n[X_K^{''}]\leq\mathbf{E}_n[\Tilde{X}_K^{''}]$, with
\begin{align}
    \Tilde{X}_K^{''} = \sum_{k=0}^{K} \mathbf{P}(N_r=k) \int_{\bar{s}_{\sss k}} \varepsilon_r(\markedunion,l) \text{d}F^{(n)}_k(\bar{s}_{\sss k}),
\end{align}
with
\begin{align}
    \varepsilon_r(\markedunion,l) = \frac{1}{n}\sum_{v\in [n]} \mathds{1}_{\big\{|B_r(\markedunion,v)|>l\big\}}.
\end{align}
where we recall that $B_r(\markedunion,v)$ denotes the $r$-neighbourhood of $v$ in the marked union graph $(\markedunion,v)$. We compute
\begin{align} \label{tilde_Y_bound}
    \mathbf{E}_n[\Tilde{X}_K^{''}] \leq \mathbf{E}_n\left[\varepsilon_r(\markedunion,l) 
    \cdot \sum_{k=0}^{\infty} \mathbf{P}_n(N_r=k) \int_{\bar{s}_{\sss k}} \text{d}F^{(n)}_k(\bar{s}_{\sss k}) \right]= \mathbf{E}_n\left[\varepsilon_r(\markedunion,l)\right].
\end{align}
Recall our argumentation that local convergence of the marked union graph implies tight neighbourhood sizes in the union graph. Thus, given any $\delta'>0$ for large enough $l$ and $n$, $\mathbf{P}_n(\varepsilon_r(\markedunion, l)\geq\delta')\leq1-\delta'$ and hence,
\begin{align*}
    \mathbf{E}_n[\Tilde{X}_K^{''}]\leq \mathbf{E}_n\left[\varepsilon_r(\markedunion,l)\right] \leq 2\delta'.
\end{align*}
Combining the bounds on $\Var_n(X_K^{''})$ that we have derived earlier, we obtain
\begin{align*}
\Var_n(X_K^{''})\leq\mathbf{E}_n\left[\left(X_K^{''}\right)^2\right]\leq\mathbf{E}_n[X_K^{''}]\leq \mathbf{E}_n[\Tilde{X}_K^{''}].
\end{align*}
Substituting the bound on $\mathbf{E}_n[\Tilde{X}_K^{''}]$ yields $\Var_n(X_K^{''})\leq 2\delta'$. We can bound $\mathrm{Cov}_n(X_K^{'},X_K^{''})$ in a similar manner, once again thanks to the fact that $0\leq X_K^{'}\leq1$, $0\leq X_K^{''}\leq1$ as
\begin{align} \label{cov_bound}
    \mathrm{Cov}_n(X_K^{'},X_K^{''}) \leq \mathbf{E}_n[X_K^{'} X_K^{''}] \leq \mathbf{E}_n[X_K^{''}] \leq 2\delta.
\end{align}
Combining all of the above and plugging it into \eqref{var_split2} yields
\begin{align}
    \Var_n\left(X_K\right) \leq \Var_n\left(X_K^{'}\right) + 6\delta',
\end{align}
which after substituting in \eqref{var_bound1} gives us
\begin{align} \label{var_bound2}
    \Var_n\left(\mathbf{E}_{\Lambda}\big[\susnr\mid \big(G_n^s\big)_{s\in[0,T]}\big]\right) \leq \Var_n\left(X_K^{'}\right) + 6\delta + 6\delta'.
\end{align}
It remains to bound $\Var_n\left(X_K^{'}\right)$.\\

\noindent\textbf{Step 2e: Bounding the remaining variance term $\bm{\Var_n\left(X_K^{'}\right)}$.} In the last step, we apply the definition of the variance and subsequently one of consequences of local convergence of the marked union graph to obtain the final bound on $\Var_n\left(\mathbf{E}_{\Lambda}\big[\susnr\mid \big(G_n^s\big)_{s\in[0,T]}\big]\right)$.
By the definition of variance,
\begin{align} \label{2nd_mom_ran_eq1}
    &\Var_n\left(X_K^{'}\right)= \mathbf{E}_n\Bigg[\Bigg(\sum_{k=0}^{K} \mathbf{P}_n(N_r=k)\int_{\bar{s}_{\sss k}} \sum_{\Tilde{H}^{\star}_{[k+1]}\in\Tilde{\mathcal{H}}_l} \left(P_{\sss r,\bar{s}_{\sss k}}^{\sss (G_n^{\sss T})}\left(\Tilde{H}^{\star}_{[k+1]}\right) - p_r^{\left(n\right)}\left(\Tilde{H}^{\star}_{[k+1]}\right)\right)\\
&\hspace{3cm}\times\mathbf{P}_{\Lambda}\left(t<T^{(r)}\left(\Tilde{H}^{\star}_{[k+1]} \right) \mid \Tilde{H}^{\star}_{[k+1]} \right)\text{d}F^{(n)}_k(\bar{s}_{\sss k})\Bigg)^2\Bigg],\nonumber
\end{align}
where $p_r^{(n)}(H^{\star}_1,\cdots,H^{\star}_K) = \mathbf{E}_n\big[P_{\sss r,\bar{s}_{\sss K}}^{\sss (G_n^{\sss T})}(H^{\star}_1,\ldots,H^{\star}_K) \big]$ and we recall that $\mathbf{E}$ denotes the expectation with respect to the randomness of $(G_n^s)_{s\in[0,T]}$. Denote
\begin{align}
    \mathlarger{\Delta}_{\sss r,\bar{s}_{\sss k}}^{\sss (G_n^{\sss T})}\left(\Tilde{H}^{\star}_{[k]}\right) = P_{\sss r,\bar{s}_{\sss k}}^{\sss (G_n^{\sss T})}\left(\Tilde{H}^{\star}_{[k]}\right) - p_r^{\left(n\right)}\left(\Tilde{H}^{\star}_{[k]}\right),
\end{align}
and
\begin{align}
    a_k = \int_{\bar{s}_{\sss k}} \sum_{\Tilde{H}^{\star}_{[k+1]}\in\Tilde{\mathcal{H}}_l} 
    \mathlarger{\Delta}_{\sss r,\bar{s}_{\sss k}}^{\sss (G_n^{\sss T})}\left(\Tilde{H}^{\star}_{[k]}\right)\mathbf{P}_{\Lambda}\left(t<T^{(r)}\left(\Tilde{H}^{\star}_{[k+1]} \right) \mid \Tilde{H}^{\star}_{[k+1]} \right)\text{d}F^{(n)}_k(\bar{s}_{\sss k}).
\end{align}
By expanding the square in \eqref{2nd_mom_ran_eq1} we obtain
\begin{align} \label{2nd_mom_ran_eq2}
    \Var_n\left(X_K^{'}\right) = \mathbf{E}_n\left[\sum_{k=0}^K \mathbf{P}^2_n(N_r=k) a_k^2 \right] + 2\mathbf{E}_n\left[\sum_{k<k'}^K \mathbf{P}_n(N_r=k) \mathbf{P}_n(N_r=k') a_k a_k'\right].
\end{align}
Since $\mathbf{P}^2_n(N_r=k) \leq 1$ and $a_k\geq0$, the first term on the right-hand-side of \eqref{2nd_mom_ran_eq2} can be bounded by $\mathbf{E}_n\left[\sum_{k=0}^K a_k^2 \right]$. Note further that
\begin{align} \label{2nd_mom_ran_eq3}
    \mathbf{E}_n\left[\sum_{k=0}^K a_k^2 \right]= &\mathbf{E}_n\Bigg[\sum_{k=0}^{K}\Bigg(\int_{\bar{s}_{\sss k}}\sum_{\Tilde{H}^{\star}_{[k+1]}\in\Tilde{\mathcal{H}}_l} \mathlarger{\Delta}_{\sss r,\bar{s}_{\sss k}}^{\sss (G_n^{\sss T})}\left(\Tilde{H}^{\star}_{[k]}\right)\times\mathbf{P}_{\Lambda}\left(t<T^{(r)}\left(\Tilde{H}^{\star}_{[k+1]} \right) \mid \Tilde{H}^{\star}_{[k+1]} \right)\text{d}F^{(n)}_k(\bar{s}_{\sss k})\Bigg)^2\Bigg]\\
    =& \mathbf{E}_n\Bigg[\sum_{k=0}^{K}\Bigg(\sum_{\Tilde{H}^{\star}_{[k+1]}\in\Tilde{\mathcal{H}}_l} \int_{\bar{s}_{\sss k}} \mathlarger{\Delta}_{\sss r,\bar{s}_{\sss k}}^{\sss (G_n^{\sss T})}\left(\Tilde{H}^{\star}_{[k]}\right)\times\mathbf{P}_{\Lambda}\left(t<T^{(r)}\left(\Tilde{H}^{\star}_{[k+1]} \right) \mid \Tilde{H}^{\star}_{[k+1]} \right)\text{d}F^{(n)}_k(\bar{s}_{\sss k})\Bigg)^2\Bigg].\nonumber
\end{align}
By expanding the square in \eqref{2nd_mom_ran_eq3} we obtain
\begin{align} \label{expand_square1}
    \Var_n\left(X_K^{'}\right)&\leq\mathbf{E}_n\Bigg[\sum_{k=0}^{K}\sum_{\Tilde{H}^{\star}_{[k+1]}\in\Tilde{\mathcal{H}}_l} \Bigg(\int_{\bar{s}_{\sss k}} \mathlarger{\Delta}_{\sss r,\bar{s}_{\sss k}}^{\sss (G_n^{\sss T})}\left(\Tilde{H}^{\star}_{[k]}\right)\times\mathbf{P}_{\Lambda}\left(t<T^{(r)}\left(\Tilde{H}^{\star}_{[k+1]} \right) \mid \Tilde{H}^{\star}_{[k+1]} \right)\text{d}F^{(n)}_k(\bar{s}_{\sss k})\Bigg)^2\Bigg]\\
    &+\sum_{k=0}^{K}\sum_{\Tilde{H}^{\star}_{[k+1]}\in\Tilde{\mathcal{H}}_l} \sum_{\Tilde{H}'^{\star}_{[k+1]}\in\Tilde{\mathcal{H}}_l} 2 \mathbf{E}_n\Bigg[\Bigg(\int_{\bar{s}_{\sss k}} \mathlarger{\Delta}_{\sss r,\bar{s}_{\sss k}}^{\sss (G_n^{\sss T})}\left(\Tilde{H}^{\star}_{[k]}\right)\times\mathbf{P}_{\Lambda}\left(t<T^{(r)}\left(\Tilde{H}^{\star}_{[k+1]} \right) \mid \Tilde{H}^{\star}_{[k+1]} \right)\text{d}F^{(n)}_k(\bar{s}_{\sss k})\nonumber\\
    &\times \int_{\bar{s}_{\sss k}} \mathlarger{\Delta}_{\sss r,\bar{s}_{\sss k}}^{\sss (G_n^{\sss T})}\left(\Tilde{H'}^{\star}_{[k]}\right)\times\mathbf{P}_{\Lambda}\left(t<T^{(r)}\left(\Tilde{H'}^{\star}_{[k+1]} \right) \mid \Tilde{H'}^{\star}_{[k+1]} \right)\text{d}F^{(n)}_k(\bar{s}_{\sss k})\Bigg)\Bigg].\nonumber
\end{align}
Note that, again by the Cauchy-Schwarz inequality,
\begin{align} \label{eq_ah_bound1}
    \Bigg(\int_{\bar{s}_{\sss k}} &\mathlarger{\Delta}_{\sss r,\bar{s}_{\sss k}}^{\sss (G_n^{\sss T})}\left(\Tilde{H}^{\star}_{[k]}\right)\times\mathbf{P}_{\Lambda}\left(t<T^{(r)}\left(\Tilde{H}^{\star}_{[k+1]} \right) \mid \Tilde{H}^{\star}_{[k+1]} \right)\text{d}F^{(n)}_k(\bar{s}_{\sss k})\Bigg)^2\\
    &\leq \int_{\bar{s}_{\sss k}} \left(\mathlarger{\Delta}_{\sss r,\bar{s}_{\sss k}}^{\sss (G_n^{\sss T})}\left(\Tilde{H}^{\star}_{[k]}\right)\right)^2\text{d}F^{(n)}_k(\bar{s}_{\sss k}) \times \int_{\bar{s}_{\sss k}} \mathbf{P}^2_{\Lambda}\left(t<T^{(r)}\left(\Tilde{H}^{\star}_{[k+1]} \right) \mid \Tilde{H}^{\star}_{[k+1]} \right)\text{d}F^{(n)}_k(\bar{s}_{\sss k}),\nonumber
\end{align}
and hence, the first term in \eqref{expand_square1} can be again bounded by
\begin{align} \label{eq_ah_bound2}
    \mathbf{E}_n&\Bigg[\sum_{k=0}^{K}\sum_{\Tilde{H}^{\star}_{[k+1]}\in\Tilde{\mathcal{H}}_l} \Bigg(\int_{\bar{s}_{\sss k}} \mathlarger{\Delta}_{\sss r,\bar{s}_{\sss k}}^{\sss (G_n^{\sss T})}\left(\Tilde{H}^{\star}_{[k]}\right)\times\mathbf{P}_{\Lambda}\left(t<T^{(r)}\left(\Tilde{H}^{\star}_{[k+1]} \right) \mid \Tilde{H}^{\star}_{[k+1]} \right)\text{d}F^{(n)}_k(\bar{s}_{\sss k})\Bigg)^2\Bigg]\\
    &\leq \sum_{k=0}^{K}\sum_{\Tilde{H}^{\star}_{[k+1]}\in\Tilde{\mathcal{H}}_l} \int_{\bar{s}_{\sss k}} \mathbf{E}_n\left[ \left(\mathlarger{\Delta}_{\sss r,\bar{s}_{\sss k}}^{\sss (G_n^{\sss T})}\left(\Tilde{H}^{\star}_{[k]}\right)\right)^2\right]\text{d}F^{(n)}_k(\bar{s}_{\sss k}).\nonumber
\end{align}
The second term in \eqref{expand_square1} can be bounded in a similar way. To simplify the notation, denote
\begin{align}
    b_{\Tilde{H}^{\star}_{[k]}} = \int_{\bar{s}_{\sss k}} \mathlarger{\Delta}_{\sss r,\bar{s}_{\sss k}}^{\sss (G_n^{\sss T})}\left(\Tilde{H}^{\star}_{[k]}\right)\times\mathbf{P}_{\Lambda}\left(t<T^{(r)}\left(\Tilde{H}^{\star}_{[k+1]} \right) \mid \Tilde{H}^{\star}_{[k+1]} \right)\text{d}F^{(n)}_k(\bar{s}_{\sss k}),
\end{align}
so that the second term in \eqref{expand_square1} becomes $\sum_{k=0}^{K}\sum_{\Tilde{H}^{\star}_{[k+1]}\in\Tilde{\mathcal{H}}_l} \sum_{\Tilde{H}'^{\star}_{[k+1]}\in\Tilde{\mathcal{H}}_l} 2 \mathbf{E}_n\left[b_{\Tilde{H}^{\star}_{[k]}} b_{\Tilde{H'}^{\star}_{[k]}} \right].$ Using the fact that $b_{\Tilde{H}^{\star}_{[k]}} b_{\Tilde{H'}^{\star}_{[k]}}\leq \big|b_{\Tilde{H}^{\star}_{[k]}} b_{\Tilde{H'}^{\star}_{[k]}}\big|$ and subsequently applying the Cauchy-Schwarz inequality, we obtain
\begin{align} \label{eq_ah_bound3}
\sum_{k=0}^{K}\sum_{\Tilde{H}^{\star}_{[k+1]}\in\Tilde{\mathcal{H}}_l} \sum_{\Tilde{H}'^{\star}_{[k+1]}\in\Tilde{\mathcal{H}}_l} 2 \mathbf{E}\left[b_{\Tilde{H}^{\star}_{[k]}} b_{\Tilde{H'}^{\star}_{[k]}} \right] \leq \sum_{k=0}^{K}\sum_{\Tilde{H}^{\star}_{[k+1]}\in\Tilde{\mathcal{H}}_l} \sum_{\Tilde{H}'^{\star}_{[k+1]}\in\Tilde{\mathcal{H}}_l} 2 \sqrt{\mathbf{E}_n\left[\left(b_{\Tilde{H}^{\star}_{[k]}}\right)^2\right]} \sqrt{\mathbf{E}_n\left[\left(b_{\Tilde{H'}^{\star}_{[k]}}\right)^2\right]}.
\end{align}
This term, in turn, can again be bounded in the same way as the first term in \eqref{expand_square1}, by applying the same steps as in \eqref{eq_ah_bound1}-\eqref{eq_ah_bound2}. Hence, combining \eqref{eq_ah_bound1}-\eqref{eq_ah_bound3}, we arrive at
\begin{align} \label{eq_last_bound}
    \mathbf{E}_n\left[\sum_{k=0}^K a_k^2 \right] \leq &\sum_{k=0}^{K}\sum_{\Tilde{H}^{\star}_{[k+1]}\in\Tilde{\mathcal{H}}_l} \int_{\bar{s}_{\sss k}} \mathbf{E}_n\left[ \left(\mathlarger{\Delta}_{\sss r,\bar{s}_{\sss k}}^{\sss (G_n^{\sss T})}\left(\Tilde{H}^{\star}_{[k]}\right)\right)^2\right]\text{d}F^{(n)}_k(\bar{s}_{\sss k})\\ &+\sum_{k=0}^{K}\sum_{\Tilde{H}^{\star}_{[k+1]}\in\Tilde{\mathcal{H}}_l} \sum_{\Tilde{H}'^{\star}_{[k+1]}\in\Tilde{\mathcal{H}}_l} 2 \int_{\bar{s}_{\sss k}} \sqrt{\mathbf{E}_n\left[ \left(\mathlarger{\Delta}_{\sss r,\bar{s}_{\sss k}}^{\sss (G_n^{\sss T})}\left(\Tilde{H}^{\star}_{[k]}\right)\right)^2\right]}\sqrt{\mathbf{E}_n\left[ \left(\mathlarger{\Delta}_{\sss r,\bar{s}_{\sss k}}^{\sss (G_n^{\sss T})}\left(\Tilde{H'}^{\star}_{[k]}\right)\right)^2\right]}\text{d}F^{(n)}_k(\bar{s}_{\sss k}). \nonumber
\end{align}
It remains to bound the second term in \eqref{2nd_mom_ran_eq2}, i.e., $2\mathbf{E}_n\left[\sum_{k<k'}^K \mathbf{P}_n(N_r=k) \mathbf{P}_n(N_r=k') a_k a_k'\right]$. Note that
\begin{align}
    2\mathbf{E}_n\left[\sum_{k<k'}^K \mathbf{P}(N_r=k) \mathbf{P}(N_r=k') a_k a_k'\right] &= 2\sum_{k<k'}^K \mathbf{P}(N_r=k) \mathbf{P}_n(N_r=k') \mathbf{E}_n\left[a_k a_k'\right] \\
    &\leq 2\sum_{k<k'}^K \mathbf{P}_n(N_r=k) \mathbf{P}_n(N_r=k') \mathbf{E}_n\left[|a_k a_k'|\right] \leq 2\sum_{k<k'}^K \sqrt{ \mathbf{E}_n\left[a^2_k\right]\mathbf{E}\left[a^2_{k'}\right]},\nonumber
\end{align}
where the last step follows again by the Cachy-Schwarz inequality. The terms $\mathbf{E}_n\left[a^2_k\right]$ and $\mathbf{E}_n\left[a^{'2}_{k}\right]$ can be bounded by
\begin{align}
    \mathbf{E}_n\left[a^2_k\right] \leq &\sum_{\Tilde{H}^{\star}_{[k+1]}\in\Tilde{\mathcal{H}}_l} \int_{\bar{s}_{\sss k}} \mathbf{E}_n\left[ \left(\mathlarger{\Delta}_{\sss r,\bar{s}_{\sss k}}^{\sss (G_n^{\sss T})}\left(\Tilde{H}^{\star}_{[k]}\right)\right)^2\right]\text{d}F^{(n)}_k(\bar{s}_{\sss k})\\ &+\sum_{\Tilde{H}^{\star}_{[k+1]}\in\Tilde{\mathcal{H}}_l} \sum_{\Tilde{H}'^{\star}_{[k+1]}\in\Tilde{\mathcal{H}}_l} 2 \int_{\bar{s}_{\sss k}} \sqrt{\mathbf{E}_n\left[ \left(\mathlarger{\Delta}_{\sss r,\bar{s}_{\sss k}}^{\sss (G_n^{\sss T})}\left(\Tilde{H}^{\star}_{[k]}\right)\right)^2\right]}\sqrt{\mathbf{E}_n\left[ \left(\mathlarger{\Delta}_{\sss r,\bar{s}_{\sss k}}^{\sss (G_n^{\sss T})}\left(\Tilde{H'}^{\star}_{[k]}\right)\right)^2\right]}\text{d}F^{(n)}_k(\bar{s}_{\sss k}),\nonumber
\end{align}
and analogously for $\mathbf{E}_n\left[a^{'2}_{k}\right]$, by taking the same steps as in \eqref{2nd_mom_ran_eq3}-\eqref{eq_last_bound}, so that
\begin{align} \label{eq_last_bound2}
    2&\mathbf{E}_n\left[\sum_{k<k'}^K \mathbf{P}_n(N_r=k) \mathbf{P}(N_r=k') a_k a_k'\right]\\
    &\leq 2\sum_{k<k'}^K \sqrt{\sum_{\Tilde{H}^{\star}_{[k+1]}\in\Tilde{\mathcal{H}}_l} \int_{\bar{s}_{\sss k}} \mathbf{E}_{H_k}\left[\Delta^2\right]\text{d}F^{(n)}_k(\bar{s}_{\sss k})+\sum_{\Tilde{H}^{\star}_{[k+1]}\in\Tilde{\mathcal{H}}_l} \sum_{\Tilde{H}'^{\star}_{[k+1]}\in\Tilde{\mathcal{H}}_l} 2 \int_{\bar{s}_{\sss k}} \sqrt{\mathbf{E}_{H_k}\left[\Delta^2\right]}\sqrt{\mathbf{E}_{H'_k}\left[\Delta^2\right]}\text{d}F^{(n)}_k(\bar{s}_{\sss k})}\nonumber\\
    &\quad\times \sqrt{\sum_{\Tilde{H}^{\star}_{[k'+1]}\in\Tilde{\mathcal{H}}_l} \int_{\bar{s}_{\sss k'}} \mathbf{E}_{H_{k'}}\left[\Delta^2\right]\text{d}F^{(n)}_{k'}(\bar{s}_{\sss k'})+\sum_{\Tilde{H}^{\star}_{[k'+1]}\in\Tilde{\mathcal{H}}_l} \sum_{\Tilde{H}'^{\star}_{[k'+1]}\in\Tilde{\mathcal{H}}_l} 2 \int_{\bar{s}_{\sss k'}} \sqrt{\mathbf{E}_{H_{k'}}\left[\Delta^2\right]}\sqrt{\mathbf{E}_{H'_{k'}}\left[\Delta^2\right]}\text{d}F^{(n)}_{k'}(\bar{s}_{\sss k'})},\nonumber
\end{align}
with $\mathbf{E}_{H_k}\left[\Delta^2\right]=\mathbf{E}_n\left[ \left(\mathlarger{\Delta}_{\sss r,\bar{s}_{\sss k}}^{\sss (G_n^{\sss T})}\left(\Tilde{H}^{\star}_{[k]}\right)\right)^2\right]$
Thus, plugging in \eqref{eq_last_bound} and \eqref{eq_last_bound2} into in \eqref{2nd_mom_ran_eq2}, we can obtain the final bound. The expectations of the form $\mathbf{E}_n\left[ \left(\mathlarger{\Delta}_{\sss r,\bar{s}_{\sss k}}^{\sss (G_n^{\sss T})}\left(\Tilde{H}^{\star}_{[k]}\right)\right)^2\right]$, in turn, can be bounded by applying a consequence of local convergence of the marked union graph. Let $N_{r,l}$ be the number of rooted graphs of radius $r$ and size $l$. In \cite{Alimohammadi2024} the authors proved that local convergence of static graphs implies that the difference between the proportion of vertices whose neighbourhoods are isomorphic to some rooted graph $H^{\star}$ and the expected value of this proportion is arbitrarily small for $n$ large enough (see \textup{\cite[Appendix C.3]{Alimohammadi2024}} for the proof that local convergence implies a stable neighbourhood structure). The marked local convergence in probability has an analogous consequence, i.e., for a given $\tilde{\delta}\leq \frac{\delta}{N_{r,l}}$, there exists $N$ such that for all $n > N$, all $K<\infty$ and for all sequences $\Tilde{H}^{\star}_{[K]} = \left(H^{\star}_1,\ldots,H^{\star}_K\right)$, $\mathbf{E}_n\big[\big(P_{\sss r,\bar{s}_{\sss K}}^{\sss (G_n^{\sss T})}(\Tilde{H}^{\star}_{[K]}) - p_r^{(n)}(\Tilde{H}^{\star}_{[K]})\big)^2\big]\leq \delta^{''}$. Note again that, as $K$ and $\Tilde{\mathcal{H}}_l$ are finite (since the number of graphs of size at most $l$ is finite), the sums in \eqref{eq_last_bound} are over finite sets. Hence, we obtain $\Var_n\left(X_K^{'}\right)\leq 9\delta^{''}$. Substituting into \eqref{var_bound2} yields
\begin{align} \label{2nd_term_bound_final}
    \Var_n\left(\mathbf{E}_{\Lambda}\big[\susnr\mid \big(G_n^s\big)_{s\in[0,T]}\big]\right) \leq 9\delta^{''} + 6\delta + 6\delta'.
\end{align}\\
\textbf{Step 3: Deriving the final bound.} Substituting \eqref{1st_term_bound_final} and \eqref{2nd_term_bound_final} into \eqref{law_total_var}, we obtain that, for $n$ large enough and any $r\in\mathbf{N}$,
\begin{align}
    \Var\big(\susnr\big) \leq \bar{\delta} + 9\delta^{''} + 6\delta + 6\delta',
\end{align}
which can be made arbitrarily small by choosing $\delta, \bar{\delta}, \delta^{'}$ and $\delta^{''}$ small.
\end{proof}

\section{Proofs of Propositions \ref{lem_on_the_lim} and \ref{lem_conv_loc_approx}} \label{sec_all_proofs_lim}
\subsection{Proof of the bound on the local approximation on the limiting graph} \label{sec_proof_loc_approx_on_the_lim}

\begin{proof}[Proof of Proposition \ref{lem_on_the_lim}]
We adapt the proof of the equivalent result for static graphs, see \textup{\cite[Lemma 4.5]{Alimohammadi2024}}. The bound (\ref{on_the_lim_ineq}) follows by the same argument as the bound in Proposition \ref{lem_1stmom}. The second part of the claim is a consequence of the fact that $s_l(t)$ is monotone decreasing in $l$ and $r_l(t)$ is monotone increasing in $l$. This implies that the limits $s(t) = \lim_{l\to\infty}s_l(t)$ and $r(t) = \lim_{l\to\infty}r_l(t)$ are well-defined and the bound on the difference with their limits follows from (\ref{on_the_lim_ineq}). Using the fact that $i_l(t) = 1 - s_l(t) - r_l(t)$ completes the proof by extending these arguments to the developments in the number of infected vertices.
\end{proof}

\subsection{Proof of convergence of the local approximation} \label{sec_proof_conv_loc_approx}

In order to prove Proposition \ref{lem_conv_loc_approx}, we now define the following functional and subsequently show that it is continuous and bounded:

\begin{lemma}\label{lem_fct_cont_bounded}
Define the functional $h_{t,r}:\mathcal{G}_{\star}\mapsto[0,1]$ by
\begin{align} \label{functional_h} 
h_{t,r}\left(\markedg\right)= \mathbf{P}_{\Lambda}\left(t<T^{(r)}(o)\mid \markedg \right).
\end{align}
For any $t\in[0,T]$ and any $r\in\mathbf{N}$, $h_{t,r}$ is bounded and continuous in $t$.
\end{lemma}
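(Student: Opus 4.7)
The plan is as follows. Boundedness is immediate because $h_{t,r}$ is defined as a conditional probability and hence takes values in $[0,1]$. The substance of the lemma is continuity of $h_{t,r}$ as a functional on $(\mathcal{G}_{\star}(\Xi), \text{d}_{\mathcal{G}_{\star}})$, which is what the proof of Proposition \ref{lem_conv_loc_approx} invokes via the continuous-bounded-functional characterisation of weak convergence. I would fix a target rooted marked graph $\hmarkedroot$ and a sequence $(\bar{G}_n, o_n, \mathcal{M}_n) \to \hmarkedroot$ in $\text{d}_{\mathcal{G}_{\star}}$. By Definition \ref{metric_marked_graphs}, for every $R\in\mathbf{N}$ and all $n$ large enough there is a rooted graph isomorphism $\phi_n : B_R(\bar{G}_n, o_n) \to B_R(H, o)$ such that the time marks on corresponding edges are at $\text{d}_{\Xi}$-distance at most $1/R$. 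Since $T^{(r)}(o)$ depends on the marked graph only through $B_r(\cdot, o)$, for $n$ large the prelimit and limit agree on an isomorphic $r$-ball whose ON/OFF marks converge componentwise.

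I would then couple the epidemic randomness between prelimit and limit through $\phi_n$: sample Bernoulli$(\rho)$ initial statuses, $D_R$-distributed recovery times $R_v$, and $D_I$-distributed transmission times $C(e)$ once, and assign them consistently to corresponding vertices and edges. Under this coupling,
\begin{align*}
    \bigl|h_{t,r}(\bar{G}_n, o_n, \mathcal{M}_n) - h_{t,r}(H, o, \mathcal{M}^H)\bigr| \leq \mathbf{P}\bigl(\mathds{1}\{t < T_n^{(r)}(o_n)\} \neq \mathds{1}\{t < T^{(r)}(o)\}\bigr),
\end{align*}
where the two infection times are computed by running Algorithm \ref{alg1} on the shared epidemic marks against the two different sets of ON/OFF marks. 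It thus suffices to show that this probability vanishes as $n\to\infty$.

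The main obstacle is an almost-sure-stability argument for the backward process under small perturbations of the time marks. The backward process on $B_r$ performs only finitely many comparisons, each of one of the following forms: a membership test $I_{u_j^{\mathcal{P}}} \in \bigcup_i [\sigma^e_{i,\mathrm{ON}}, \sigma^e_{i,\mathrm{OFF}}]$, a transmission-window test $s^0 + C(e) \in \bigcup_i [\sigma^e_{i,\mathrm{ON}}, \sigma^e_{i,\mathrm{OFF}}]$, a recovery test $s^0 + C(e) \leq I_u + R_u$, and the final check $t < T^{(r)}(o)$. In every such comparison, one side is an affine combination of the continuously distributed $C(e)$'s and $R_v$'s, while the other is a fixed threshold determined by the graph marks (or the deterministic $t$). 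Since $D_I$ and $D_R$ are atomless, the event that any such comparison is tight has probability zero; off this null set the minimum gap between evaluated quantities and thresholds is strictly positive. As the $\text{d}_{\Xi}$-distance between prelimit and limit marks on $B_r$ tends to $0$, for $n$ large every perturbation is smaller than this gap, so every decision, and hence $T^{(r)}(o)$, is preserved. Dominated convergence with the constant dominator $1$ then gives $\mathbf{P}(T_n^{(r)}(o_n)\neq T^{(r)}(o))\to 0$, establishing continuity.

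For completeness, if the clause ``continuous in $t$'' is to be read literally, the same atomlessness argument shows that, conditionally on the marked graph, $T^{(r)}(o)$ is a finite minimum over finite sums involving the continuous $C(e)$ and $R_v$ variables, so its distribution function is continuous; hence $t\mapsto h_{t,r}(\markedg)=\mathbf{P}_\Lambda(t<T^{(r)}(o)\mid\markedg)$ is also continuous in $t$.
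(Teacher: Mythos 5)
Your proposal is correct and follows essentially the same route as the paper: both reduce continuity to a coupling of the epidemic randomness across the isomorphic, mark-close $r$-balls (shared initial infections, recovery times and transmission times) and then use atomlessness of $D_I$ to show the coupled outcomes agree with probability tending to one. The only difference is the final limiting step, where the paper bounds the discrepancy quantitatively by $\varepsilon'\kappa\max_{e}N(e)$ via $\sup_x\mathbf{P}\left(D_I\in[x-\delta,x+\delta]\right)\leq\varepsilon'$ summed over the finitely many paths, while you use an almost-sure no-ties/minimum-gap argument followed by dominated convergence; both are valid (and note your appeal to atomlessness of $D_R$ is not needed, since every comparison involving a recovery time also involves an independent, continuous $C(e)$).
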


\begin{proof}
The boundedness of $h_{t,r}$ follows immediately since probabilities are bounded between $0$ and $1$. We will derive the continuity by showing that, for two graphs with similar dynamic trajectories, the difference in the probabilities of their roots becoming infected before a given time $t\in[0,T]$ is small. Fix two distinct rooted marked union graphs $\marklim$ and $\marklimbar$. Let $\varepsilon>0$ be given. We want to find $\delta=\delta(\varepsilon)>0$ such that if
\begin{align}\label{dist_marked}
    \text{d}_{\mathcal{G}_{\star}}\left(\marklim, \marklimbar \right) < \delta,
\end{align}
where $\text{d}_{\mathcal{G}_{\star}}$ denotes the distance with respect to the metric on marked rooted graphs as in Definition \ref{metric_marked_graphs}, then
\begin{align} \label{dist_to_show}
\bigg|\mathbf{P}_{\Lambda}\bigg(t<T^{(r)}(o)\mid \marklim \bigg)- \mathbf{P}_{\Lambda}\bigg(t<T^{(r)}(o)\mid \marklimbar \bigg)\bigg| < \varepsilon.
\end{align}
By the definition of $\text{d}_{\mathcal{G}_{\star}}$, (\ref{dist_marked}) implies that the corresponding rooted union graphs $\left(G^{[0,T]},o\right)$ and $\left(\Bar{G}^{[0,T]},\Bar{o}\right)$ are isomorphic up to $r=1/\delta$. Moreover, from Definition \ref{def_metric_marks_dyn_graphs}, this also implies that for every edge $e\in\marklim$ there exists a corresponding edge $\Bar{e}\in\marklimbar$ such that $\Bar{e}=\phi(e)$, where $\phi()$ is the isomorphism between the two rooted graphs. Further, for each such pair of edges $(e,\phi(e))$,
\begin{align}
    \text{d}_{\Xi}\left(\mathcal{M}(e),\mathcal{M}(\phi(e))\right)= \Bigg(|N(e)-N(\phi(e))|+ \sum_{i=1}^{\min\{N(e),N(\phi(e))\}} \left(|t^{e}_{i,\text{\rm{ON}}}-\Bar{t}^{\phi(e)}_{i,\text{\rm{ON}}}| + |t^e_{i,\text{\rm{OFF}}}-\Bar{t}^{\phi(e)}_{i,\text{\rm{OFF}}}|\right)\Bigg) \leq \delta.
\end{align}
Thus, for $\delta$ small, for each $e$, $N(e)=N(\phi(e))$ and and for each $i\in\{1,\ldots,N(e)\}$, $\Bar{t}^{\phi(e)}_{\sss i,\text{\rm{ON}}} = t^{e}_{\sss i,\text{\rm{ON}}}\pm\delta, \Bar{t}^{\phi(e)}_{\sss i,\text{\rm{OFF}}} = t^{e}_{\sss i,\text{\rm{OFF}}}\pm\delta$. Hence, there exists an isomorphism for all paths in $B_r\marklim$ and $B_r\marklimbar$ up to $r=1/\delta$ and all edge marks on isomorphic edges in the two graphs are at most $\delta$ away. Thus, for $\delta$ sufficiently small, the dynamic $r$-neighbourhood trajectories are \emph{uniformly close} throughout $[0,T]$ up to large $r$, which will allow us to couple the infection processes in the subsequent steps of the proof.\\

To conclude the claim, first note that the right-hand side of (\ref{functional_h}) depends only on $B_r\marklim$. Hence, (\ref{dist_to_show}) can be simplified to
\begin{align} \label{dist_to_show_rewrite}
\bigg|&\mathbf{P}_{\Lambda}\bigg(t<T^{(r)}\left(B_r\marklim\bigg)\mid \marklim \right)\\
&-\mathbf{P}_{\Lambda}\bigg(t<T^{(r)}\left(B_r\marklimbar\bigg)\mid \marklimbar \right)\bigg| < \varepsilon.\nonumber
\end{align}
As we have shown that the two graph marked graphs are isomporhic up to $r=1/\delta$, it remains to show that infection times in the $1/\delta$- neighbourhoods of the roots in such graphs are close. To do so, we derive a coupling of an epidemic process on $1/\delta$-neighbourhoods of our graphs.\\

We start by coupling the sets of initially infected vertices. Recall that at time $t=0$ each vertex is supposed to be infected with some probability $\rho$ independently of all the other vertices. Define a sequence of i.i.d. random variables $(U_v)_{v\in\mathcal{V}\left(B_{1/\delta}\marklim\right)}\sim \rm{Unif}([0,1])$, where $\rm{Unif}([0,1])$ denotes uniform distribution on $[0,1]$. We set up the following infection rule: infect $v$ in $B_{1/\delta}\marklim$ if $U_v\leq\rho$; similarly, infect $\phi(v)$ in $B_{1/\delta}\marklimbar$ if $U_v\leq\rho$, where $\phi$ is again the isomorphism between the two graphs. This couples the sets of initially infected vertices in both graphs such that they are identical (with respect to the graph isomorphism). We continue to the further infection step. We generate pairs $(D^e_I, D^e_R)_e$ for each $e$ in $B_{1/\delta}\marklim$. Then, we let the infection pass through the edge $e$ if and only if $D^e_I\leq D^e_R$ and additionally $D^e_I\in \bigcup_i[t^e_{\sss i,\mathrm{ON}},t^e_{\sss i,\mathrm{OFF}}]$. Similarly, we let the infection pass through $\phi(e)$ if and only if $D^e_I\leq D^e_R$ and additionally $D^e_I\in \bigcup_i[\Bar{t}^{\phi(e)}_{\sss i,\mathrm{ON}},\Bar{t}^{\phi(e)}_{\sss i,\mathrm{OFF}}]$. Note that in such a case the probability that an infection passes through edge $e$ but not through $\phi(e)$ is equal to $\mathbf{P}(D_I^e\in\cup_i(t^e_{\sss i,\mathrm{ON}},t^e_{\sss i,\mathrm{ON}}\pm\delta]\cup(t^e_{\sss i,\mathrm{OFF}},t^e_{\sss i,\mathrm{OFF}}\pm\delta])$. Hence, for any pair $\{e,\phi(e)\}$, these ON periods differ by at most $2 \delta$. As we have assumed that $D_I$ is continuous, for each $\varepsilon'>0$ we can find $\delta>0$ such that $\sup_x\mathbf{P}(D_I^e\in[x-\delta,x+\delta])\leq \varepsilon'$. Thus, for each $\varepsilon'>0$ we can find $\delta>0$ such that $\mathbf{P}(D_I^e\in\cup_i(t^e_{\sss i,\mathrm{ON}},t^e_{\sss i,\mathrm{ON}}\pm\delta]\cup(t^e_{\sss i,\mathrm{OFF}},t^e_{\sss i,\mathrm{OFF}}\pm\delta])\leq N(e)\varepsilon'$.\\

Applying the coupling inequality, the expression in (\ref{dist_to_show_rewrite}) can be bounded by the probability that the root $o$ becomes infected by time $t$ in $B_{1/\delta}\marklim$ but the root $\Bar{o}=\phi(o)$ does not get infected in $B_{1/\delta}\marklimbar$ during the coupled epidemic process. For that to happen, at least one path from $I(0)$ - the set of initially infected vertices in $B_{1/\delta}\marklim$ - to $o$ needs to be successful and its total weight needs to be smaller than $t$, while no such path exists between $\phi(I(0))$ and $\Bar{o}$. Denote a collection of paths from $I(0)$ to $o$ by $\mathscr{P}_o$ and all paths from $\phi(I(0))$ to $\Bar{o}$ by $\mathscr{\Bar{P}}_{\Bar{o}}$. Note that due to the structural similarity between the two graphs, for each $\mathcal{P}\in\mathscr{P}_o$ there exists $\phi(\mathcal{P})\in\mathscr{\Bar{P}}_{\Bar{o}}$. For each path $ \mathcal{P} $, let $ |\mathcal{P}| $ denote its length (number of edges). Denote the probability that $ \mathcal{P} $ succeeds in $ G $ while $ \phi(\mathcal{P}) $ fails in $\Bar{G}$ by $\mathbf{P}_{\mathcal{P}}$. Then, $\mathbf{P}_{\mathcal{P}}$ is bounded by
\begin{align}
\mathbf{P}_{\mathcal{P}} \leq |\mathcal{P}| \times \varepsilon' \max_{e\in\mathcal{P}}N(e),
\end{align}
Thus,
\begin{align}
(\ref{dist_to_show}) &\leq \sum_{\mathcal{P} \in \mathscr{P}_o} \mathbf{P}_{\mathcal{P}} \leq \varepsilon'\max_{e\in\mathscr{P}_o}N(e) \sum_{\mathcal{P} \in \mathscr{P}_o} |\mathcal{P}| = \varepsilon' \kappa\max_{e\in\mathscr{P}_o}N(e),
\end{align}
where $\kappa = \sum_{\mathcal{P} \in \mathscr{P}_o} |\mathcal{P}|$. In a finite $r$-neighbourhood, the number and length of all paths is bounded (note that the paths are self-avoiding) and thus, such an expression can be made arbitrarily small by taking $\delta$ small, which also couples the epidemic up to some $r$ large as we have established the coupling for all $r\leq1/\delta$. Hence, we can conclude that for any $r\in\mathbf{N}$ and any $\varepsilon>0$, (\ref{dist_to_show}) holds with $\delta=\delta(\varepsilon)$ satisfying
\begin{align*}
 \sup_x\mathbf{P}(D_I^e\in[x-\delta,x+\delta])\leq \frac{\varepsilon}{\max_{e\in\mathscr{P}_o}N(e)\kappa}.
\end{align*}
\end{proof}

By applying this functional along with local time-marked union convergence, we can prove Proposition \ref{lem_conv_loc_approx}, as demonstrated below:

\begin{proof}[Proof of Proposition \ref{lem_conv_loc_approx}]
By the assumption that the time-marked union graph $\markedunionroot$ given by $(G_n^s)_{s\in[0,T]}$ converges locally in probability to $\marklim$, we know that, for any continuous and bounded functional $h$,
\begin{align} \label{eq_lem34_1}
\mathbf{E}_n\left[h\left(\markedunionroot\right)\mid (G_n^s)_{s\in[0,T]}\right]\stackrel{\mathbf{P}}{\longrightarrow} \mathbf{E}_{\mu}\left[h\left(\marklim\right)\right].
\end{align}
Taking $h_{r,t}$ as in (\ref{functional_h}), the right-hand side of \eqref{eq_lem34_1},
\begin{align} \label{eq_lem34_2}
\mathbf{E}_n&\left[h_{t,r}\left(\markedunionroot\right)\mid (G_n^s)_{s\in[0,T]}\right]=\mathbf{E}_n\bigg[\mathbf{P}_{\Lambda}\bigg(t<T_n^{(r)}(o_n)\mid \left(\markedunion,o_n\right) \bigg)\mid (G_n^s)_{s\in[0,T]}\bigg]\\ &=\mathbf{P}_{\Lambda}\bigg(t<T_n^{(r)}(o_n)\mid \left(\markedunion,o_n\right) \bigg)= \mathbf{E}_{\Lambda}\left[\frac{1}{n}\sum_{i\in[n]}\mathds{1}_{\{\text{$i$ infected by $t$}\}} \mid \left(\markedunion,o_n\right)\right]\nonumber\\
&= \mathbf{E}_{\Lambda}\left[\susnr \mid \left(\markedunion,o_n\right)\right] = \mathbf{E}_{\Lambda}\left[\susnr \mid \left(G_n^s\right)_{s\in[0,T]}\right].\nonumber
\end{align}
Further, the left-hand side of \eqref{eq_lem34_1} becomes
\begin{align} \label{eq_lem34_3}
    \mathbf{E}_{\mu}\left[h_{t,r}\left(\marklim\right)\right]&= \mathbf{E}_{\mu}\left[\mathbf{P}_{\Lambda}\left(t<T^{(r)}(o) \mid \marklim \right)\right]= \mathbf{P}_{\Lambda}\left(t<T^{(r)}(o)\right) = s_r(t),
\end{align}
and it follows that 
\begin{align} \label{eq_lem34_4}
    \mathbf{E}_{\Lambda}\left[\susnr \mid \left(G_n^s\right)_{s\in[0,T]}\right] \stackrel{\mathbf{P}}{\longrightarrow}  s_r(t).
\end{align}
Note that
\begin{align}
    \mathbf{E}\left[\susnr \right] = \mathbf{E}_n\left[\mathbf{E}_{\Lambda}\left[\susnr \mid \left(G_n^s\right)_{s\in[0,T]}\right]\right],
\end{align}
where $\mathbf{E}_n$ denotes the expectation with respect to the randomness of the dynamic graph. Further, note that both sides of \eqref{eq_lem34_4} are bounded by $1$, since they can be expressed in terms of probabilities (see \eqref{eq_lem34_2} and \eqref{eq_lem34_3}). Hence, applying the dominates convergence theorem to \eqref{eq_lem34_4} with $\mathbf{E}_n$ yields the claim.
\end{proof}

\section{Examples of converging dynamic random graphs} \label{sec_simulations_details}
In this section, we provide more details of the dynamic graph models presented in Section \ref{sec_examples_dyn_rg} and their local limits.

\subsection{The Erd\H{o}s-R{\'e}nyi random graphs}

In Section \ref{sec_simulations_overview}, we presented the results of our numerical simulations aimed at validating Theorem \ref{thm_dyn_conv_epid} (see Figure \ref{fig:plots_2}). By comparing the epidemic's progression on the full dynamic graph with that on its time-marked union local limit, the simulations demonstrated the accuracy of the local limit approximation. These results show that using the local limit not only simplifies computations but also maintains a high level of reliability in modelling epidemic spread on dynamic graphs. We now formally introduce the static and dynamic Erd\H{o}s-R{\'e}nyi random graph models used for this simulation, along with their local limits. 

In 1960 Erd\H{o}s and R{\'e}nyi published a paper \cite{ER_1960} which contained a very profound analysis of a graph consisting of $n$ vertices and a fixed number of edges, added uniformly at random without replacement. This paper was not the first one concerned with the topic of random graph models. However, due to multiple important results, it is commonly considered to have started the field. Note that the above formulation of the model differs from the one that has been widely popularised - a graph on $n$ vertices with an edge between each pair added independently with probability $p$. Such a definition of the model was actually introduced by Gilbert in \cite{Gilbert_1959} but because of the rich findings of \cite{ER_1960}, it remained known as the Erd\H{o}s-R{\'e}nyi random graph. Naturally, the two formulations are closely related (for details as well as an overview of results on the Erd\H{o}s-R{\'e}nyi random graph, see \cite{Bollobas84, Hofstad2016, JLR2000}).

We will use the latter definition with $p=\gamma/n$ and denote the model by $\mathrm{ER}_n(\gamma/n)$.

\subsubsection{Static Erd\H{o}s-R{\'e}nyi random graph}

To apply Theorem \ref{thm_dyn_conv_epid} we first specify the local limit of $\mathrm{ER}_n(\gamma/n)$ in the following theorem:

\begin{theorem}[Local limit of the Erd\H{o}s-R{\'e}nyi random graph \textup{\cite[Theorem 2.18]{Hofstad2023}}]
Fix $\gamma>0$. $\mathrm{ER}_n(\gamma/n)$ converges locally in probability to a Poisson branching process with mean offspring $\gamma$.
\end{theorem}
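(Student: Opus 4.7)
The plan is to establish convergence in probability via the standard second-moment approach adapted to the local topology on $\mathcal{G}_\star$. For any fixed finite rooted graph $(H,o)$ and any $r \in \mathbf{N}$, let
\begin{align*}
    P_n(H,o) = \frac{1}{n}\sum_{v\in[n]} \mathds{1}_{\{B_r(\mathrm{ER}_n(\gamma/n),v)\simeq (H,o)\}}.
\end{align*}
Convergence of $P_n(H,o)$ in probability to $\mathbf{P}(B_r(T_\gamma,\varnothing)\simeq (H,o))$, where $T_\gamma$ denotes a Galton-Watson tree with $\mathrm{Poisson}(\gamma)$ offspring distribution, for all $(H,o)$ and all $r$, characterizes local convergence in probability. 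To verify this it suffices to show (i) $\mathbf{E}[P_n(H,o)] \to \mathbf{P}(B_r(T_\gamma,\varnothing)\simeq (H,o))$, and (ii) $\mathrm{Var}(P_n(H,o))\to 0$.

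For step (i), I would fix a uniformly chosen root $o_n\in[n]$ and explore $B_r(\mathrm{ER}_n(\gamma/n),o_n)$ via breadth-first search. At each stage, given that $k$ vertices have already been discovered, the number of new neighbours of the currently explored vertex is distributed as $\mathrm{Bin}(n-k,\gamma/n)$, independently of the exploration history by the independence of edges in the Erd\H{o}s-R{\'e}nyi graph. Since any fixed $r$-neighbourhood in the limiting tree is almost surely finite, the relevant values of $k$ are $o(n)$ with high probability, so $\mathrm{Bin}(n-k,\gamma/n) \xrightarrow{\mathrm{d}} \mathrm{Poisson}(\gamma)$. Additionally, the probability of creating a cycle during the exploration up to depth $r$ is $O(1/n)$, since this requires two of the $O(1)$ explored vertices to be connected by a ``back edge'', each of which is present with probability $\gamma/n$. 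This proves that $B_r(\mathrm{ER}_n(\gamma/n),o_n)\xrightarrow{\mathrm{d}} B_r(T_\gamma,\varnothing)$ in $\mathcal{G}_\star$, and since $\mathds{1}_{\{\cdot \simeq (H,o)\}}$ is a bounded continuous functional on $\mathcal{G}_\star$, the first-moment convergence follows.

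For step (ii), I would use the second-moment method: expand
\begin{align*}
    \mathbf{E}[P_n(H,o)^2] = \frac{1}{n^2}\sum_{u,v\in[n]} \mathbf{P}\bigl(B_r(\mathrm{ER}_n(\gamma/n),u)\simeq (H,o),\ B_r(\mathrm{ER}_n(\gamma/n),v)\simeq (H,o)\bigr).
\end{align*}
Let $u_n,v_n$ be two independent uniformly chosen vertices. The key point is asymptotic independence of their $r$-neighbourhoods: the probability that the BFS explorations from $u_n$ and from $v_n$ meet within $r$ steps is $O(1/n)$ (because both neighbourhoods have $O(1)$ vertices and each potential connecting edge is present with probability $\gamma/n$). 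Conditionally on the explorations being disjoint, the same BFS argument as in step (i) applies to both explorations simultaneously using the remaining edges, yielding joint convergence to two independent copies of $B_r(T_\gamma,\varnothing)$. Hence $\mathbf{E}[P_n(H,o)^2] \to \mathbf{P}(B_r(T_\gamma,\varnothing)\simeq (H,o))^2$, so $\mathrm{Var}(P_n(H,o))\to 0$ and Chebyshev's inequality gives convergence in probability.

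The main technical obstacle is making the ``asymptotic independence of two explorations'' rigorous: one must carefully argue that conditioning on the first exploration does not bias the distribution of edges used in the second exploration beyond a negligible amount. This is clean for Erd\H{o}s-R{\'e}nyi because of the full independence of edges, so the second BFS simply sees an $\mathrm{ER}_{n-|B_r(\mathrm{ER}_n(\gamma/n),u_n)|}(\gamma/n)$ plus potentially a few bridging edges, both of which contribute limits matching the independent Poisson tree. All other steps (tightness of neighbourhood sizes, continuity of the indicator with respect to $\text{d}_{\mathcal{G}_\star}$) are routine consequences of the finiteness of the limiting branching process at any fixed generation.
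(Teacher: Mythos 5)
The paper does not prove this statement at all: it is quoted verbatim from the cited reference (\cite[Theorem 2.18]{Hofstad2023}), so there is no in-paper argument to compare against. Your proposal is the standard and correct proof of that cited result: first moment via breadth-first exploration, coupling each $\mathrm{Bin}(n-k,\gamma/n)$ offspring count to $\mathrm{Poisson}(\gamma)$ and bounding the cycle-creation probability by $O(1/n)$, then variance control via asymptotic independence of the explorations from two uniform roots; this is essentially the argument given in the reference, and all the steps you flag as routine (tightness of neighbourhood sizes, local constancy of $(G,o)\mapsto\mathds{1}_{\{B_r(G,o)\simeq(H,o)\}}$ under $\text{d}_{\mathcal{G}_{\star}}$) do go through as claimed.
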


Hence, following Theorem \ref{thm_dyn_conv_epid}, to determine the time trajectory of an SIR epidemic on the Erd\H{o}s-R{\'e}nyi random graph, it suffices to establish the probability that the root of a Poisson branching process tree with mean offspring $\gamma$ becomes infected (and subsequently recovered) by some time point $t$. This reduces the problem to finding the shortest-weighted path from the set of initially infected vertex to $o$, where the weights are given by transmission times drawn from the distribution $D_I$:
\begin{align}
    \susn \stackrel{\mathbf{P}}{\longrightarrow} s(t) = \mathbf{P}\left(t<\min_{\mathcal{P}\in\mathscr{P}_o} I_o^{\mathcal{P}}\right),
\end{align}
where $\mathscr{P}_o$ denotes the set of all paths from the set of initially infected vertices to $o$ and $I_o^{\mathcal{P}}$ is the infection time of $o$ on the path $\mathcal{P}$. The weights can be determined by performing the first part of the backward process from Section \ref{sec_backward_process}. Time dynamic verification from the second part is not needed in the static case.

\subsubsection{Dynamic Erd\H{o}s-R{\'e}nyi random graph}
In this section, we expand on the dynamic Erd\H{o}s-R{\'e}nyi random graphs introduced in Sections \ref{sec_examples_dyn_rg}. We provide a detailed description of the dynamic local limit of the graph from Definition \ref{dyn_er_def} and give a justification for why the alternative formulation from Definition  \ref{def_alt_er}. Note that the stationary distribution of $\mathrm{DER}_n(\gamma/n)$ from Definition \ref{dyn_er_def} is a static $\mathrm{ER}_n(\gamma/n)$ with the same edge probability. We begin with introducing the limiting time-marked union object:
\begin{definition}[Limiting time-marked union tree] \label{def_er_lim_union}
The \emph{time-marked union tree}, denoted by $\mathrm{BP}^{\sss \mathrm{ON, OFF}}\left(\gamma(1+T)\right)$, is a Poisson branching process tree with mean offspring distribution $\gamma(1+T)$. Furthermore, each edge $e$ in $\mathrm{BP}^{\sss \mathrm{ON, OFF}}\left(\gamma(1+T)\right)$ is assigned a pair of marks $(t^{\sss e}_{\sss\rm{ON}}, t^{\sss e}_{\sss\rm{OFF}})$, where the marks are independent and identically distributed (i.i.d.) copies of random variables with the joint cumulative distribution function $F^{\sss\rm{ON,OFF}}_{\sss T}$, given by  
\begin{align} \label{law_of_the_marks_formula_er}
F^{\sss\rm{ON,OFF}}_{\sss T}(s_1, s_2) = \frac{1 - e^{-s_2 + s_1} + s_1}{1 + T}, \quad 0 \leq s_1 \leq s_2 \leq \infty.
\end{align}
\end{definition}
With the limiting object formally defined, we state the result on local time-marked union convergence of the  Dynamic Erd\H{o}s-R{\'e}nyi random graph $\mathrm{DER}_n(\gamma/n)$:
\begin{theorem}[Local time-marked union limit of the dynamic Erd\H{o}s-R{\'e}nyi random graph] \label{dyn_er_union_lim}
Fix $\gamma>0$ and a positive integer $n$. The Dynamic
Erd\H{o}s-R{\'e}nyi random graph process $\big(ER^s_n(\gamma/n) \big)_{s\in[0,T]}$ converges in probability in the local time-marked union graph sense to the time-marked union tree $\mathrm{BP}^{\sss \mathrm{ON, OFF}}\left(\gamma(1+T)\right)$ introduced in Definition \ref{def_er_lim_union}.
\end{theorem}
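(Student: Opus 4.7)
The strategy is to reduce the statement to the known local convergence of static Erd\H{o}s-R\'enyi graphs together with a direct computation of the mark distribution, exploiting the fact that in $\mathrm{DER}_n(\gamma/n)$ the dynamics of distinct edges are mutually independent.

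First, I would compute the marginal probability that an edge $e$ belongs to the union graph. Conditioning on the state of $e$ at time $0$,
\begin{align*}
    \mathbf{P}\big(e\in G_n^{[0,T]}\big)
    = \frac{\gamma}{n} + \Big(1-\frac{\gamma}{n}\Big)\Big(1-e^{-T\gamma/(n-\gamma)}\Big)
    = \frac{\gamma(1+T)}{n} + O\!\big(n^{-2}\big),
\end{align*}
and since different pairs evolve independently, $G_n^{[0,T]}$ is asymptotically an Erd\H{o}s-R\'enyi graph with parameter $\gamma(1+T)/n$, whose local limit in probability is the Poisson branching process with mean offspring $\gamma(1+T)$. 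This handles the structural half of the claim.

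Second, I would derive the conditional joint law of $(\sigma^{e}_{1,\mathrm{ON}},\sigma^{e}_{1,\mathrm{OFF}})$ given $e\in G_n^{[0,T]}$. With asymptotic probability $1/(1+T)$ the edge is ON at time $0$, so $\sigma^{e}_{1,\mathrm{ON}}=0$ and $\sigma^{e}_{1,\mathrm{OFF}}\sim\mathrm{Exp}(1)$ capped at $T$. With asymptotic probability $T/(1+T)$ the edge starts OFF and its first ON time, conditional on occurring in $[0,T]$, has density proportional to $e^{-s\gamma/(n-\gamma)}$ on $[0,T]$, which converges to $\mathrm{Unif}[0,T]$; its first OFF time is then an independent $\mathrm{Exp}(1)$ later. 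A direct integration of these two cases yields exactly the joint CDF $F^{\sss\rm{ON,OFF}}_{\sss T}$ of \eqref{law_of_the_marks_formula_er}. Moreover, the probability that a single edge switches ON more than once in $[0,T]$ is $O(n^{-2})$, so a union bound over a bounded-radius neighbourhood (whose expected size is $O(1)$ by the Poisson limit) shows that $N(e)=1$ for every edge in that neighbourhood with probability tending to $1$, matching the single-pair mark structure of $\mathrm{BP}^{\sss\mathrm{ON,OFF}}(\gamma(1+T))$.

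Third, I would assemble these ingredients into the convergence required by Definition \ref{def_mark_union_conv_probab}. For a fixed rooted marked graph $\hmarkedroot$ and $r\in\mathbf{N}$, the empirical expectation appearing in \eqref{cond_def_fdd_strong_conv_probab} factorises along edges: the structural factor converges by the static Erd\H{o}s-R\'enyi local limit, and each edge contributes an independent $F^{\sss\rm{ON,OFF}}_{\sss T}$-distributed mark factor (discretised according to the metric $\text{d}_{\Xi}$ of Definition \ref{def_metric_marks_dyn_graphs}). The required concentration---i.e.\ replacing the empirical proportion by its mean---then follows from a standard second-moment computation in the spirit of Proposition \ref{lem_2ndmom_random}: two uniformly chosen vertices at distance exceeding $2r$ in $G_n^{[0,T]}$ have genuinely independent marked neighbourhoods because distinct edges in $\mathrm{DER}_n(\gamma/n)$ are independent, and the proportion of vertex pairs within distance $2r$ is $O(n^{-1})$. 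The main technical obstacle is the handling of the continuous marks: one must pass from the marginal convergence computed above to concentration of the event $\{\text{d}_{\Xi}(\mathcal{M}(e),\tilde{\mathcal{M}}(\phi(e)))\leq 1/(r+1)\}$ uniformly over the (random) edge set of the $r$-neighbourhood, which requires a careful discretisation of $[0,T]^2$ together with a portmanteau-type argument---the very machinery developed for the dynamic random intersection graph in \cite{Milewska2023}, which adapts here essentially verbatim since DER has the simpler structure of per-edge independent dynamics.
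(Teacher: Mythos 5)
Your proposal is correct, but it takes a genuinely different route from the paper. The paper's entire proof is a one-line reduction: it observes that $\mathrm{DER}_n(\gamma/n)$ is a special case of the dynamic random intersection graph of Definition \ref{def_dyn_drig} (groups of size $2$ only), and then invokes the already-established local time-marked union convergence of that model, \textup{\cite[Theorem 2.25]{Milewska2023}} (restated here as Theorem \ref{thm_drig_quote}). You instead verify the statement directly: your computation of $\mathbf{P}(e\in G_n^{[0,T]})=\gamma(1+T)/n+O(n^{-2})$ is correct, the identification of $G_n^{[0,T]}$ as an exact Erd\H{o}s--R\'enyi graph with this parameter (by per-edge independence) is the right structural observation, and your two-case derivation of the conditional mark law does integrate out to exactly the CDF $F^{\sss\rm{ON,OFF}}_{\sss T}$ of \eqref{law_of_the_marks_formula_er}, including the $O(n^{-1})$ bound ruling out a second ON period within a bounded neighbourhood. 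What your approach buys is self-containedness and an explicit derivation of the mark distribution, which the paper merely asserts in Definition \ref{def_er_lim_union}; what the paper's approach buys is brevity, at the cost of importing the full machinery of \cite{Milewska2023}. Note that your final assembly step (concentration of the empirical neighbourhood proportions with continuous marks) still defers to the discretisation and portmanteau arguments of \cite{Milewska2023}, so your proposal is not entirely independent of that reference either; but for this model the deferral is harmless, since the independence of distinct edge trajectories makes the second-moment bound routine.
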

\begin{proof}
Local time-marked union convergence in probability of $\big(ER^s_n(\gamma/n) \big)_{s\in[0,T]}$ to $\mathrm{BP}\left(\gamma(1+T)\right)$ with the marks specified in Theorem \ref{dyn_er_union_lim} follows directly from \textup{\cite[Theorem 2.25]{Milewska2023}} (stated in the following section as Theorem \ref{thm_drig_quote}), since $\mathrm{DER}_n(\gamma/n)$ is a special case of dynamic random intersection graph (see \textup{\cite[Remark 1.7]{Milewska2023}}.
\end{proof}
\begin{remark}
In the Dynamic Erd\H{o}s-R{\'e}nyi random graph process, the number of edges that go $\mathrm{ON}$ more than once in $[0,T]$ is negligible, hence the $\mathrm{ON}$ and $\mathrm{OFF}$ marks on the edges in the limiting graph are expressed with a single interval rather than a union of intervals as in Definition \ref{def_time_marks_dyn_graphs}.
\end{remark}

\subsubsection{Alternative dynamic Erd\H{o}s-R{\'e}nyi random graph}
Note that in case of the model from Definition \ref{def_alt_er}, the union graph is equal to $\mathrm{ER}^0_n(\gamma/n)$, i.e., the graph at time $s=0$. Marks of the edges are independent and given by interarrival times of a Poisson point process, which do not depend on the graph size. This implies that local time-marked union convergence holds.

\subsection{Random intersection graphs}
In a random intersection graph, denoted by $\mathrm{RIG}$ and introduced in \cite{Singer_thesis}, vertices are associated with communities (also referred to as groups), and edges are placed between vertices when they share groups. The community memberships arise in a random way, determined by some underlying random bipartite graph which places vertices on one side and communities on the other and links these two disjoint sets. We will think of the two sets as corresponding to left- and right-vertices. Once the links between vertices and groups are assigned, the random intersection graph is a deterministic function: vertices are connected by an edge if they meet in the same community, which gives rise to a clique structure between members of groups. The procedure forming the resulting intersection graph from the underlying bipartite graph is often called a community projection or a one-mode projection.

The underlying bipartite graph of group memberships can be generated in various ways, for instance by performing a percolation on the complete bipartite graph (binomial $\mathrm{RIG}$ \cite{Fill2000, Karonski1999, Singer_thesis} or inhomogeneous $\mathrm{RIG}$ \cite{Bloznelis_Damarackas2013, Deijfen2009}), by prescribing the number of community links to each vertex and connecting them to communities chosen uniformly at random (uniform $\mathrm{RIG}$ \cite{Blackburn2009, Rybarczyk2011} or generalised $\mathrm{RIG}$ \cite{Bloznelis2010, Bloznelis2013, Bloznelis2017, Bloznelis_Jaworski_Kurauskas2013, Godehardt_Jaworski}), or by prescribing both the number of community links to every vertex and the number of members to each community and matching them uniformly at random (i.e., group memberships given by a bipartite configuration model) \cite{Lelarge2015, Hofstad2022, Hofstad2018, Newman2003}.

\subsubsection{Static random intersection graph} \label{sec_drig_stat}
In our previous work \cite{Milewska2023}, we introduced and studied both static and dynamic versions of a specific intersection graph model. In this paper, we extend that work by examining the spread of an SIR epidemic on these graphs, with the results presented in Section \ref{sec_simulations_overview}. The plots in that section were generated by applying Theorem \ref{thm_dyn_conv_epid}, simulating the progression of the SIR model on the static and dynamic local limits of the graph models. For the reader’s convenience, we now provide a brief summary of the model definition, expanding on Definition \ref{def_dyn_drig}, and the corresponding local convergence results, to clarify the objects under study.\\

\noindent\textbf{Vertices and groups.} As in Definition \ref{def_dyn_g}, let $[n] = \{1, \ldots, n\}$ be the fixed vertex set. Each vertex $i \in [n]$ is assigned a deterministic weight $w_i$. Further, let $[n]_k$ denote the set of subsets of size $k$ of $[n]$ and let $\cup_{k\geq 2} [n]_{k}$ - a union of all $k$-element subsets of $[n]$ with $k\geq2, k\in\mathbf{N}$ - denote the set of groups.\\

We start by defining the first layer corresponding to the underlying bipartite graph:\\

\noindent\textbf{The static bipartite graph.} The stationary (static) bipartite generalised random graph, denoted $\bgrg$, is formed by drawing edges between the left-vertices in $[n]$ and right-vertices (groups) in $\cup_{k\geq2}[n]_k$, with $k\geq 2$. Specifically, we draw edges between the group $a\in\cup_{k\geq2}[n]_k$ and each of its vertices from $[n]$ with probability $\piaon$, independently of all other groups, where $\piaon$ will be defined below. To be consistent with the dynamic case terminology, we say that these groups are ON. Analogously, with probability $\piaoff$, there are no edges between the group $a\in\cup_{k\geq2}[n]_k$ and each of its vertices, i.e., such groups are OFF. These probabilities are given by
\begin{align} \label{pi_ON_1}
    \pi^a_{\text{\rm{ON}}} = \frac{f(|a|)\prod_{i \in a}w_i}{\ell_n^{|a|-1}+f(|a|)\prod_{i \in a}w_i} \hspace{0.5cm} \text{and} \hspace{0.5cm} \pi^a_{\text{\rm{OFF}}} = \frac{\ell_n^{|a|-1}}{\ell_n^{|a|-1}+f(|a|)\prod_{i \in a}w_i}.
\end{align}
As one can see, we allow every collection of $k$ vertices, with $k\in[2,n]$, to form a group. We now proceed to define the resulting static intersection graph.\\

\noindent\textbf{The static intersection graph.} The second layer - the resulting static random intersection graph denoted $\drig$ - has vertex set $[n]$. It is formed from $\bgrg$ via the community projection, i.e., by drawing an edge between two vertices $i,j\in[n]$ for every ON group that they are in together in $\bgrg$. Hence, $\drig$ is a projection of $\bgrg$, the random multi-graph given by the edge multiplicities $\big(X\big(i,j)\big)_{i,j\in[n]}$, such that
\begin{align} \label{com_proj}
    X(i,j) = \sum_{k=2}^{\infty}\sum_{a\in[n]_k} \mathds{1}_{\{\text{$i$ in $a$}\}\cap\{\text{$j$ in $a$}\}}\mathds{1}_{\{\text{$a$ is \rm{ON}}\}}.
\end{align}

Next, we formulate our assumptions on the weights of left-vertices. These assumptions are necessary to guarantee (dynamic) local convergence of $\bgrg$ and $\drig$.\\

\noindent \textbf{Assumptions on weights.} We first define what the empirical distribution of the weights is:
\begin{definition}[Empirical vertex weights distribution] We define the empirical distribution function of the vertex weights as
\begin{align}
    F_n(x) = \frac{1}{n} \sum_{i \in [n]} \mathds{1}_{\{w_i \leq x\}}, \hspace{0.2cm} \text{for} \hspace{0.2cm} x\geq 0.
\end{align}
\end{definition}
\noindent $F_n$ can be interpreted as the distribution function of the weight of a vertex $o_n$, chosen from $[n]$ uniformly at random. We denote the weight of $o_n$ by $W_n=w_{o_n}$.
We impose the following conditions on the vertex weights:
\begin{condition} [Regularity condition for vertex weight] \label{cond_weights}
There exists a distribution function $F$ such that, as $n \to \infty$, the following conditions hold:
\begin{enumerate}[(a)]
    \item Weak convergence of vertex weights:
    \begin{align}
        W_n \stackrel{d}{\longrightarrow} W,
    \end{align}
    where $W_n$ and $W$ have distribution functions $F_n$ and $F$, respectively. Equivalently, for any $x$ for which $x \mapsto F(x)$ is continuous,
    \begin{align}
        \lim_{n\to\infty} F_n(x)=F(x).
    \end{align}
    \item Convergence of average vertex weight:
    \begin{align}
        \lim_{n\to\infty} \mathbf{E}[W_n] = \mathbf{E}[W],
    \end{align}
    where $W_n$ and $W$ have distribution functions $F_n$ and $F$, respectively. Further, we assume that $\mathbf{E}[W]>0$.
\end{enumerate}
\end{condition}

\noindent \textbf{Assumptions on the dependence on the group sizes.}
We take 
    \begin{equation}
    \label{f-choice}
    f(|a|) = |a|!p_{|a|},
    \end{equation}
where $(p_{k})_{k\geq2}$ is the probability mass function of the group sizes. A particularly important case is a power-law group-size distribution where $p_k$ is approximately proportional to $k^{-(\alpha+1)}$ for $k$ large. We denote
\begin{align} \label{assump_1_mom}
    \zeta = \sum_{k=2}^{\infty} kp_k,
\end{align}
and assume $\zeta<\infty$. We also assume that the second moment of the group-size distribution is finite, so that $\alpha>2$, i.e.,
\begin{align} \label{assump_2_mom}
    \zeta_{(2)} = \sum_{k=2}^{\infty} k^2p_k < \infty.
\end{align}

Having explained the static random intersection model from \cite{Milewska2023}, we proceed to explain its local limit.\\

\noindent \textbf{The static limiting object $(\mathrm{BP}_{\upsilon},0)$.} We start by introducing $(\mathrm{BP}_{\upsilon},0)$, the local limit in probability of $\bgrg$. Naturally, as we are dealing with two types of vertices - the left and the right ones - a typical neighbourhood in this graph will be different depending on the type of the root. However, it is not possible to determine whether a uniformly chosen root was a left- or a right-vertex just on the basis of its neighbourhood. Hence, we introduce additional marks to keep track of different types of vertices. Let $\Tilde{\Xi}^b=\{l,r,\varnothing\}$ be the set of marks. We mark left-vertices as $l$ and right-vertices as $r$. Formally,
\begin{align}
    \mathcal{M}^b_n(x)=
    \begin{cases}
             l \hspace{0.2cm} &\text{if} \hspace{0.2cm} x\in [n],\\
             r \hspace{0.2cm} &\text{if} \hspace{0.2cm} x\in [n]_{k\geq2},\\
             \varnothing \hspace{0.2cm} &\text{if} \hspace{0.2cm} x \hspace{0.1cm} \text{is an edge in $\bgrg$}.
    \end{cases}
\end{align}
Now we introduce the limiting object $(\mathrm{BP}_{\upsilon},\mathcal{M}^{\upsilon},0)$, the local limit of the $\bgrg$ equipped with the mark function $\mathcal{M}^b_n$, while $(\mathrm{BP}_{\upsilon},0)$ is then obtained by ignoring the mark function. Define a mixing variable $\upsilon$ as 
\begin{align}
    \mathbf{P}(\upsilon=l)=\frac{1}{1+\Bar{M}} \hspace{0.6cm} \text{and} \hspace{0.6cm} \mathbf{P}(\upsilon=r)=\frac{\Bar{M}}{1+\Bar{M}},
\end{align}
where $\Bar{M}=\mathbf{E}[W]$ is the limit in probability of $M_n/n$, with $M_n = \#\{a\in[n]_k: \text{$a$ is \rm{ON}}\}$ (see \textup{\cite[Theorem B.7]{Milewska2023}} for the derivation of $\Bar{M}$). Then, $(\mathrm{BP}_{\upsilon},\mathcal{M}^{\upsilon},0)$ is a mixture of two marked ordered BP-trees, $(\mathrm{BP}_l,\mathcal{M}^l,0)$ and $(\mathrm{BP}_r,\mathcal{M}^r,0)$:
\begin{align}
    (\mathrm{BP}_{\upsilon},\mathcal{M}^{\upsilon},0) \stackrel{d}{=} \mathds{1}_{\{\gamma=l\}}(\mathrm{BP}_l,\mathcal{M}^l,0)+\mathds{1}_{\{\gamma=r\}}(\mathrm{BP}_r,\mathcal{M}^r,0),
\end{align}
where $(\mathrm{BP}_l,\mathcal{M}^l,0)$ describes the neighbourhood of a left-vertex and $(\mathrm{BP}_r,\mathcal{M}^r,0)$ of a right-vertex. Hence,
$(\bgrg,\mathcal{M}^b_n,V_n^{(l)})$ converges locally in probability to $(\mathrm{BP}_l,\mathcal{M}^l,0)$, and $(\bgrg,\mathcal{M}^b_n,V_n^{(r)})$ converges locally in probability to $(\mathrm{BP}_r,\mathcal{M}^r,0)$, where $V_n^{(l)}$ and $V_n^{(r)}$ denote vertices chosen uniformly from the set of all left- and right-vertices respectively. The mixing variable $\gamma$ can thus be re-interpreted as the random mark of the root.\\

Before we proceed, we need to introduce the size-biased and shift version of a random variable:
\begin{definition} \label{def_shift_variable}
For an $\mathbb{N}$-valued random variable $X$ with $\mathbf{E}[X] < \infty$, we define its size-biased distribution $X^{\star}$ and
the shift variable $\Tilde{X}$ by their probability mass functions, for all $k\in\mathbb{N}$,
\begin{align}
    \mathbf{P}(X^{\star}=k)=\frac{k\mathbf{P}(X=k)}{\mathbf{E}[X]} \hspace{1cm} \text{and} \hspace{1cm} \mathbf{P}(\Tilde{X}=k)=\mathbf{P}(X^{\star}-1=k).
\end{align}
\end{definition}
Now we can continue with the description of the random ordered marked tree $(\mathrm{BP}_l,\mathcal{M}^l,0)$ itself. We consider a discrete-time branching process where the offspring of any two individuals are independent. We then give the individuals in even and odd generations marks $l$ and $r$, respectively. Generation 0 contains the root alone and the root's offspring distribution is $D^{(l)}$, where $D^{(l)}$ is a mixed-Poisson variable with mixing parameter $W\zeta$ (see \textup{\cite[Theorem B.3]{Milewska2023}}). In consecutive generations, the offspring distribution of individuals marked with $l$ will be $\Tilde{D}^{(l)}$ and of individuals marked with $r$ will be $\Tilde{D}^{(r)}$, where $D^{(r)}$ is such that, as $n\to\infty$,
\begin{align}\label{conv_unif_a}
    \mathbf{P}(D^{(r)} = k) \stackrel{\mathbf{P}}{\longrightarrow} p_k.
\end{align} 
$(\mathrm{BP}_r,\mathcal{M}^r,0)$ is defined analogously with reversed roles of $l$ and $r$.\\

\noindent \textbf{Static local limit of $\drig$.} Having specified the local limit of the underlying $\bgrg$, we proceed to the limit of the resulting graph $\drig$.\\

\noindent \textbf{The static limiting object $(\mathrm{CP},o)$.} The limit that we denote by $(\mathrm{CP},o)$ is a random rooted graph and the `community projection' (see (\ref{com_proj})) of $(\mathrm{BP}_{l}, \mathcal{M}^{l},0)$ in the same way that $\drig$ is the “community projection” of the underlying $\bgrg$: it extracts only vertices marked as $l$ and builds links between pairs of vertices that are connected to the same vertex with mark $r$. Let us accentuate that even though this limit is not a tree, it relies on the tree-like structure of the underlying $\bgrg$. This constructs the local limit $(\mathrm{CP},o)$ of $\drig$.\\

Having explained the limiting objects, we state the limiting result of $\bgrg$ and $\drig$:
\begin{restatable}[Local convergence of $\bgrg$ and $\drig$ \textup{\cite[Theorem 1.4]{Milewska2023}}]{theorem}{locconvstatic}\label{loc_conv_static} Consider $\bgrg$ under Condition \ref{cond_weights}. As $n\to\infty$, $(\bgrg,V^b_n)$ converges locally in probability to $(\mathrm{BP}_{\upsilon},0)$. Consequently for $\drig$ under Condition \ref{cond_weights}, as $n\to\infty$, $(\drig,o_n) $ converges locally in probability to $(\mathrm{CP},o)$.
\end{restatable}

For the proof of Theorem \ref{loc_conv_static} we refer the reader to \textup{\cite[Appendix B.4]{Milewska2023}}.

\subsubsection{Dynamic random intersection graph}
We now define the dynamic version of the model introduced in the previous section. The construction of the dynamic intersection graph is analogous, starting with the bipartite structure of the set of left-vertices $[n]$ and the set of right-vertices $a\in \cup_{k\geq 2}[n]_k$, followed by a community projection. However, in the dynamic setting, the groups alternate between the ON and OFF states rather than staying in one of them permanently, which gives rise to a set of edges that evolves over time as in Definition \ref{def_dyn_g}. We now characterise the rules of this process.\\

\noindent \textbf{Group dynamics.} Every group $a\in \cup_{k\geq 2}[n]_k$ will alternate between an ON and OFF state independently of all other groups, following a continuous-time Markov process. The holding times, i.e., the time that a group spends in each of the states, are exponentially distributed with rates
\begin{align} \label{holding_times}
    \lambda^a_{\text{\rm{ON}}}=1 \hspace{1cm} \text{and} \hspace{1cm} \lambda^a_{\text{\rm{OFF}}}=\frac{f(|a|)\prod_{i\in a}w_i}{\ell_n^{|a|-1}},
\end{align}
respectively, where $\bm{w} = (w_i)_{i \in [n]}$ are certain vertex weights, $\ell_n = \sum_{i\in[n]} w_i$ is the total weight and $|a|$ denotes the size of a group $a\in \cup_{k\geq 2} [n]_k$. Note that he stationary distribution $\bm{\pi} = [\pi_{\text{\rm{ON}}},\pi_{\text{\rm{OFF}}}]$ of these Markov chains is given by \eqref{pi_ON_1}, as
\begin{align} \label{pi_ON}
    \pi^a_{\text{\rm{ON}}} = \frac{\laoff}{\laon+\laoff} = \frac{f(|a|)\prod_{i \in a}w_i}{\ell_n^{|a|-1}+f(|a|)\prod_{i \in a}w_i} \hspace{0.5cm} \text{and} \hspace{0.5cm} \pi^a_{\text{\rm{OFF}}} = \frac{\laon}{\laon+\laoff} = \frac{\ell_n^{|a|-1}}{\ell_n^{|a|-1}+f(|a|)\prod_{i \in a}w_i}.
\end{align}
We initialise the group statuses with probabilities corresponding to the stationary distribution, i.e., at time $s=0$, each group is ON with probability $\piaon$ and OFF with probability $\piaoff$, independently of all other groups. To obtain our dynamic random intersection graph, we draw an edge between all the vertices in groups that are ON, so that, in the dynamic random graph, the groups represent dynamic cliques. However, to define it more precisely, we first explain the construction of the underlying dynamic bipartite graph.\\

\noindent \textbf{The dynamic bipartite graph process.} We denote the dynamic bipartite generalised random graph by $\big(\bgrgs\big)_{s\in[0,T]}$. It is a dynamic graph process in which at time $s=0$ every group is ON with probability $\piaon$ and OFF with probability $\piaoff$, independently of all other groups. For $s>0$, groups keep switching ON and OFF, according to the evolution of the continuous-time Markov Chains explained before. Analogously to the static $\bgrg$, the edges are drawn between a group $a$ and all of its vertices whenever the group is ON, and are removed when the group switches OFF (hence, for all $s\in[0,T]$, $\bgrgs$ is equal in distribution to the static $\bgrg$).\\

\noindent \textbf{The dynamic intersection graph process.} We denote the dynamic random intersection graph process by $\big(\drigs\big)_{s\in[0,T]}$. $\big(\drigs\big)_{s\in[0,T]}$ is obtained from $\big(\bgrgs\big)_{s\in[0,T]}$ similarly to the way in which $\drig$ is obtained from $\bgrg$: for every $s\in[0,T]$, an edge is drawn between $i,j\in[n]$ for every ON group that they are in together at time $s$ in $\big(\bgrgs\big)_{s\in[0,T]}$. Hence, $\big(\drigs\big)_{s\in[0,T]}$ is again a projection of $\big(\bgrgs\big)_{s\in[0,T]}$, the random dynamic multi-graph given by the edge multiplicities $ \big(X^s(i,j)\big)_{i,j\in[n],s\in[0,T]}$ such that for each $s\in[0,T]$,
\begin{align} \label{com_proj_dyn}
    X^s(i,j) = \sum_{k=2}^{\infty}\sum_{a\in[n]_k} \mathds{1}_{\{\text{$i$ in $a$}\}\cap\{\text{$j$ in $a$}\}}\mathds{1}_{\{\text{$a$ is \rm{ON} at time $s$}\}}.
\end{align}
We now introduce the following limiting time-marked union objects:

\begin{definition}[Limiting mixed time-marked union tree and its community projection]
The mixed time-marked union tree, denoted by $\left(\mathrm{BP}_{\upsilon}^{\sss[0,T],\mathrm{ON,OFF}},o\right)$ is analogous to $(\mathrm{BP}_{\upsilon}, o)$, with the key difference being that the left offspring distribution, $D^{(l)}$, is replaced by $D^{(l),[0,T]}$, a Poisson-distributed random variable with parameter $W\zeta(1+T)$. The right offspring distribution remains the same as $D^{(r)}$ and the appearance of size-biased versions of $D^{(l),[0,T]}$ and $D^{(r)}$ occurs in the same way as in $(\mathrm{BP}_{\upsilon}, o)$. Additionally, each vertex labeled with $r$ in $\left(\mathrm{BP}_{\upsilon}^{\sss[0,T],\mathrm{ON,OFF}},o\right)$ is assigned marks $(t^{\sss a}_{\sss\rm{ON}}, t^{\sss a}_{\sss\rm{OFF}})$, which are independent and identically distributed (i.i.d.) copies of random variables with the joint cumulative distribution function $F^{\sss\rm{ON,OFF}}_{\sss T}$, given by  
\begin{align} \label{law_of_the_marks_formula}
F^{\sss\rm{ON,OFF}}_{\sss T}(s_1, s_2) = \frac{1 - e^{-s_2 + s_1} + s_1}{1 + T}.
\end{align}
Furthermore, the marked community projection of $\left(\mathrm{BP}_{\upsilon}^{\sss[0,T],\mathrm{ON,OFF}},o\right)$, denoted by $\left(\mathrm{CP}^{\sss[0,T],\mathrm{ON,OFF}},o\right)$, is constructed from $\left(\mathrm{BP}_{\upsilon}^{\sss[0,T],\mathrm{ON,OFF}},o\right)$ via the community projection procedure described before (see \eqref{com_proj_dyn}). The marks on the edges between vertices in $\left(\mathrm{CP}^{\sss[0,T],\mathrm{ON,OFF}},o\right)$ inherit the marks of the $r$-labeled vertex they were linked to in $\left(\mathrm{BP}_{\upsilon}^{\sss[0,T],\mathrm{ON,OFF}},o\right)$.The marks on the edges in $\left(\mathrm{CP}^{\sss[0,T],\mathrm{ON,OFF}},o\right)$ are inherited as follows: for $l$-labeled vertices linked to the same $r$-labeled vertex in the bipartite structure, the edge between any two such $l$-labeled vertices after the community projection inherits the mark of the edges that originally linked them to the shared $r$-labeled vertex.
\end{definition}
Having formally defined the above objects, we proceed to state the result on the local time-marked union limits of $\big(\bgrgs\big)_{s\in[0,T]}$ and $\big(\drigs\big)_{s\in[0,T]}$:
\begin{theorem}[Local time-marked union limit of $\bgrgs$ and $\drigs$ \textup{\cite[Theorem 2.25]{Milewska2023}}]\label{thm_drig_quote}
Under Condition \ref{cond_weights}, as $n\to\infty$, the bipartite generalised random graph process $\big(\big(\bgrgs,o_n\big)\big)_{s\in[0,T]}$ converges locally in probability in a local time-marked union graph sense to $\left(\mathrm{BP}_{\upsilon}^{\sss[0,T],\mathrm{ON,OFF}},o\right)$. Consequently, the dynamic intersection rooted graph process 
$\big(\big(\drigs,o_n\big)\big)_{s\in[0,T]}$ converges locally in probability in the local time-marked union graph sense to $\left(\mathrm{CP}^{\sss[0,T],\mathrm{ON,OFF}},o\right)$.
\end{theorem}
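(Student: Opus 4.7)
The plan is to leverage the static local convergence result Theorem \ref{loc_conv_static} by treating the time-marked union graph as a \emph{static} bipartite generalized random graph with modified connection probabilities, enriched with time marks whose joint distribution is then computed by a short calculation. Because groups evolve as independent continuous-time Markov chains, the presence of each group $a$ in the union graph $\bgrgunion$ remains independent across groups, and a direct computation gives
\[
\bar{\pi}^a_T = \piaon + \piaoff\bigl(1 - e^{-\laoff T}\bigr) = \laoff(1+T)\bigl(1+o(1)\bigr),
\]
for every fixed $|a|\geq 2$, since $\laoff\to 0$ as $n\to\infty$. Replaying the Poisson-convergence argument that drives the proof of Theorem \ref{loc_conv_static}, now with $\piaon$ replaced by $\bar{\pi}^a_T$, shows that the number of groups containing a uniformly chosen left-vertex of weight $w$ which are ever ON during $[0,T]$ converges in distribution to Poisson$(w\zeta(1+T))$. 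This identifies the left-offspring distribution $D^{(l),[0,T]}$. The right-vertex offspring distribution is unchanged, because the group-size distribution of a group selected proportionally to $\bar{\pi}^a_T$ still converges to $(p_k)_{k\geq 2}$ as in the static case (the $(1+T)$ factor is uniform in $|a|$ to leading order and therefore cancels).

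Second, I would attach the time marks to each right-vertex and identify their limiting law. Conditionally on group $a$ being ever ON in $[0,T]$, the mark $(\sigma^a_{1,\mathrm{ON}},\sigma^a_{1,\mathrm{OFF}})$ is a two-case mixture: with probability $\piaon/\bar{\pi}^a_T \to 1/(1+T)$ the group is ON at time $0$, so $\sigma^a_{1,\mathrm{ON}}=0$ and $\sigma^a_{1,\mathrm{OFF}}=\min(\mathrm{Exp}(1),T)$; with the complementary probability $T/(1+T)$ the group starts OFF, and in the limit $\laoff\to 0$, conditioned on the first $\mathrm{OFF}\!\to\!\mathrm{ON}$ transition occurring before $T$, $\sigma^a_{1,\mathrm{ON}}$ becomes uniform on $[0,T]$, followed by an independent $\mathrm{Exp}(1)$ ON-duration truncated at $T$. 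Integrating this mixture yields exactly the expression \eqref{law_of_the_marks_formula} for $F^{\mathrm{ON,OFF}}_T$. Independence of marks across distinct groups is inherited from independence of the underlying Markov chains, and coupling these marks to the Bernoulli indicators from the first step gives joint convergence of the rooted marked $r$-neighbourhood in the metric of Definition \ref{def_metric_marks_dyn_graphs}.

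Finally, convergence in probability (rather than merely in distribution) follows from the standard self-averaging argument already used to upgrade Theorem \ref{loc_conv_static}: the proportion of vertices $v\in[n]$ whose marked union $r$-neighbourhood matches a given $\hmarkedroot$ concentrates around its expectation because, by the sparsity $\laoff\propto n^{-(|a|-1)}$ together with the finite group-size second moment \eqref{assump_2_mom}, the marked union neighbourhoods of two uniformly chosen distinct vertices are asymptotically independent. The consequence for the dynamic intersection graph process is then immediate, because the community projection together with the inheritance of edge marks from the incident $r$-vertex is a deterministic continuous map on finite marked bipartite neighbourhoods, and hence preserves local time-marked union convergence. The main obstacle will be to control the event that a group $a$ switches ON more than once during $[0,T]$: the limit records only a single $(\mathrm{ON},\mathrm{OFF})$-interval per group, so one must verify that within any fixed $r$-neighbourhood of a typical root the expected number of groups with $N(a)\geq 2$ tends to $0$ as $n\to\infty$. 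This follows from a first-moment bound using the smallness of $\laoff$, but the estimate must be uniform over the polynomially many groups reachable within $r$ steps in $\bgrgunion$, which is where the second-moment assumption on the group-size distribution enters.
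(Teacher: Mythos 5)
This theorem is not proved in the present paper: it is quoted from the authors' earlier work and the text explicitly defers the argument to \textup{\cite[Remark 2.30 and Section 3.5]{Milewska2023}}, so there is no in-paper proof to compare against line by line. That said, your reconstruction is consistent with the strategy the paper describes (establishing local marked union convergence of the bipartite layer first and then pushing it through the community projection), and the key computations check out. In particular, the probability that a group is ever ON in $[0,T]$ starting from stationarity is indeed $\piaon+\piaoff(1-\mathrm{e}^{-\laoff T})=\laoff(1+T)(1+o(1))$, which correctly inflates the left-offspring mean from $W\zeta$ to $W\zeta(1+T)$ while leaving the group-size distribution of a typical ever-ON group unchanged; and your two-case mixture for the mark law (weight $1/(1+T)$ for ON-at-zero with an $\mathrm{Exp}(1)$ OFF time, weight $T/(1+T)$ for a uniform activation time followed by an $\mathrm{Exp}(1)$ duration) integrates exactly to \eqref{law_of_the_marks_formula}. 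You also correctly identify the one genuine technical point the limit object suppresses, namely that a group may switch ON more than once in $[0,T]$; since the re-activation rate is $\laoff\to0$ and the number of groups in a fixed $r$-neighbourhood is tight under Condition \ref{cond_weights}, a first-moment bound plus a union bound over the (bounded-in-probability, not merely polynomially many) groups in the neighbourhood disposes of this. Two small caveats: the role you assign to \eqref{assump_2_mom} is slightly off, as the second-moment condition is needed primarily so that the size-biased group-size distribution $\tilde{D}^{(r)}$ is well defined rather than for the multiple-activation estimate; and the self-averaging step upgrading weak convergence to convergence in probability needs to be carried out for the joint law of the neighbourhood \emph{and} its marks, not just the unmarked union graph, though the asymptotic independence of two uniformly chosen roots extends to the marked setting because the group dynamics are independent across groups.
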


For the proof of Theorem \ref{thm_drig_quote} we refer the reader to \textup{\cite[Remark 2.30 and Section 3.5]{Milewska2023}}. 

\subsection{Dynamic Configuration Model}
We now provide a heuristic argument for the local time-marked union convergence of the dynamic configuration model introduced in Definition \ref{def_dyn_CM}. At time $0$, this model reduces to the standard (static) configuration model. As $n \to \infty$, the static configuration model converges locally in distribution to a Galton-Watson tree: the root vertex has degree $k$ with probability $q_k$, with  
\begin{align}
    q_k = \lim_{n \to \infty} \mathbf{P}(D_n = k),
\end{align}
where $D_n$ is the degree of a typical vertex in the configuration model (see \cite[Theorem 4.1]{Hofstad2023} for details). Then, the offspring distribution in subsequent generations is given by $\tilde{q}_k$, with
\begin{align}
    \tilde{q}_k = \frac{(k+1)q_{k+1}}{\mathbf{E}[D_n]}.
\end{align}
Next, we consider the union graph constructed up to time $T$ from the dynamic model. This union graph includes all the edges present at time $0$, as well as any new edges formed due to rewiring events that occurred up to time $T$.\\

In the dynamic model, edges are rewired at a global rate $\alpha \ell_n$. Since two edges are chosen uniformly at random at each rewiring event, each individual edge is selected at rate $2\alpha$. Therefore, a vertex that initially has degree $k$ sees its $k$ edges chosen at a total rate $2\alpha k$. When an edge is chosen, it is re-paired in a way that creates a `new' connection with probability $2/3$. Consequently, the rate at which new edges are formed at a vertex of initial degree $k$ is $4\alpha k/3$. Over the time interval $[0,T]$, the total number of new connections formed at such a vertex is Poisson-distributed with mean $4\alpha k T/3$.\\

Thus, the local limit of the union graph up to time $T$ is again a Galton-Watson tree, but with an augmented offspring distribution. Initially, a vertex with degree $k$ contributes $k$ offspring, corresponding to its original half-edges, where the root vertex has degree distribution $q_k$, and the vertices in subsequent generations have the size-biased distribution $\tilde{q}_k$. Additionally, given its initial degree $k$, each vertex accumulates an independent number of new connections, denoted by $X_k \sim \mathrm{Poisson}(4\alpha k T/3)$, over the time interval $(0, T]$.\\

Furthermore, the times at which new edges are created are points of independent Poisson processes. Since the edge-creation times are driven by memoryless Poissonian dynamics, the time-marked version of the union graph also converges locally. Each edge in the limit can be `marked' with the arrival times of these newly formed connections, which are distributed as Poisson processes with rate $4\alpha k/3$. Therefore, the local time-marked union convergence holds as well.

\subsection{Simulation setup and interpretation}
In this section we elaborate on the simulation study presented in Section \ref{sec_sim_on_dyn_rg}.

\subsubsection{Estimation performance on the dynamic Erd\H{o}s-R{\'e}nyi random graph}

The key finding of this paper is the claim that the spread of an epidemic on the dynamic graph converges to the spread of this epidemic on a limit of the marked union graph given by this dynamic graph (see Theorem \ref{thm_dyn_conv_epid}). Consequently, the proportion of individuals infected by time $t$ converges to the probability that the root of the limiting marked union graph becomes infected by time $t$. This statement has been proven theoretically and validated empirically through simulations (see Section \ref{sec_sim_on_dyn_rg}). Below, we briefly outline the simulation setup and key observations.\\

\noindent\textbf{Theoretical intuition.} The theoretical framework predicts that the error of the limiting approximation behaves as $(1-\rho)^r$ (Proposition \ref{lem_1stmom}), where $\rho$, the proportion of initially infected vertices, strongly influences accuracy. Lower values of $\rho$ require increasing the depth 
$r$ to achieve a good approximation, adding computational complexity.\\

\noindent\textbf{Simulation setup.} To verify Theorem \ref{thm_dyn_conv_epid}, we conducted simulations on a dynamic graph with $25,000$ vertices and a corresponding limiting time-marked union tree. Assuming exponentially distributed infection and recovery times, $D_I$ and $D_R$, we tested three parameter scenarios, performing $500$ epidemic runs for each. Errors were measured between the spread on the graph and the limiting tree.\\

\noindent\textbf{Key results.} The simulations confirm the theoretical predictions. For higher values of $\rho$, such as $0.5$, the approximation is highly accurate even for small depths $r=5$. However, as $\rho$ decreases, accuracy deteriorates unless deeper neighborhoods are explored (see Figure \ref{fig:plots_2}). This highlights the trade-off between computational feasibility and accuracy for small $\rho$.\\

\noindent\textbf{Conclusion.} The simulations validate the theoretical results and provide insights into the interplay between $\rho$ and $r$. Future work may focus on more efficient methods for handling low $\rho$ cases or exploring real-world implications of the approximation.

\medskip

\noindent
\parbox{0.84\textwidth}{\paragraph{\bf Acknowledgement.} The work of MM is supported by the European Union’s Horizon 2020 research and innovation programme under the Marie Skłodowska-Curie grant agreement no. 945045, and that of MM and RvdH by the NWO Gravitation project NETWORKS under grant no. 024.002.003.}
\hspace{0.3cm}\parbox{0.14\textwidth}{
\begin{center}
\includegraphics[width=0.14\textwidth]{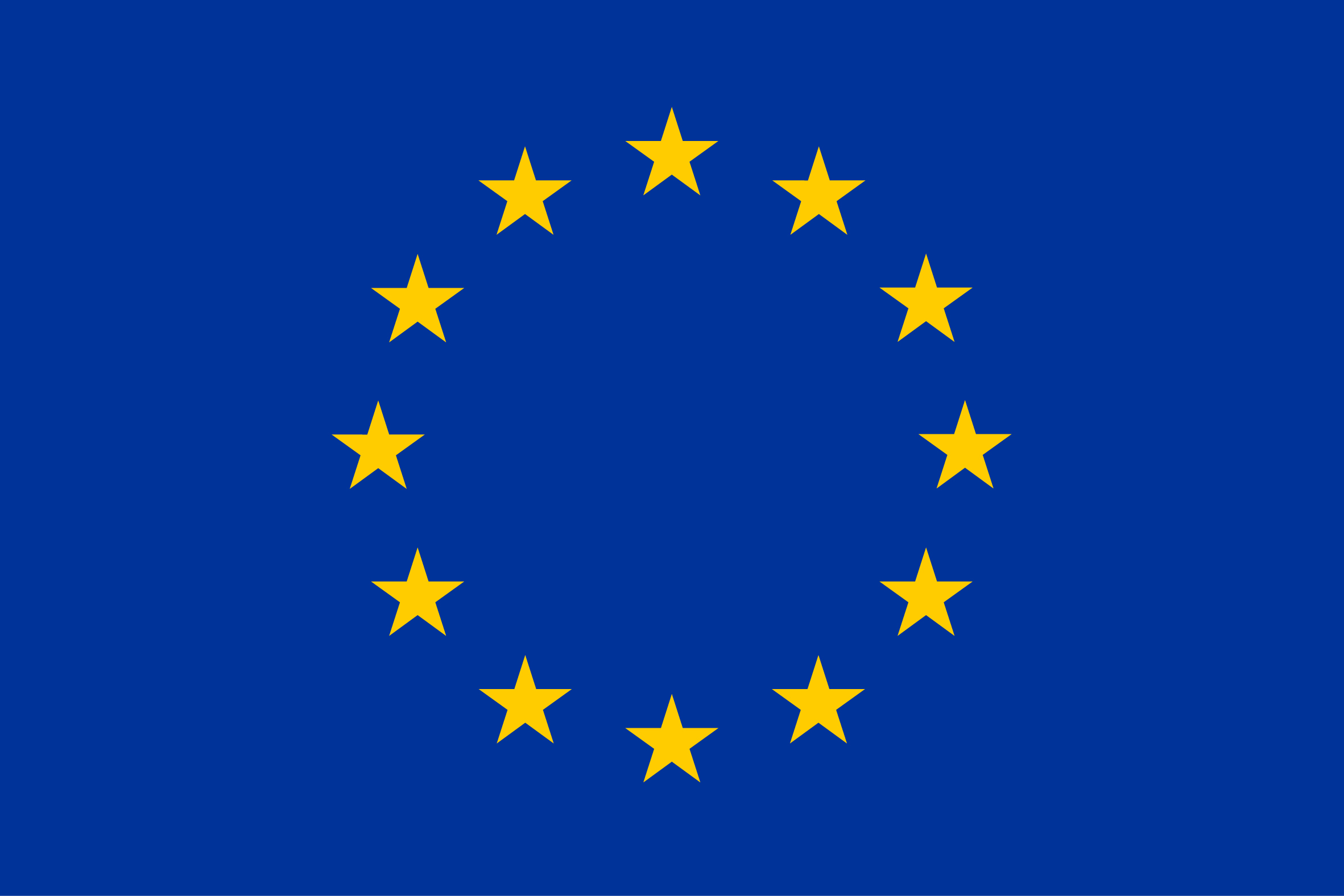}
\end{center}}



\end{document}